\newtheorem{theorem}{Theorem}[section]
 \newtheorem{corollary}[theorem]{Corollary}
 \newtheorem{lemma}[theorem]{Lemma}
 \newtheorem{proposition}[theorem]{Proposition}
 \theoremstyle{definition}
 \newtheorem{definition}[theorem]{Definition}
 \theoremstyle{remark}
 \newtheorem{remark}[theorem]{Remark}
 \numberwithin{equation}{section}
\newcommand{\R}{\mathbb R}
\date{}
\begin{document}
\maketitle

\centerline{$^1$ Department of Mathematics,
IME-USP}
 \centerline{Rua do Mat\~ao 1010, Cidade Universit\'aria, CEP 05508-090,
 S\~ao Paulo, SP, Brazil.}
 \centerline{\it andresgerardo329@gmail.com}
 \centerline{\it angulo@ime.usp.br}

\begin{abstract}
This work aims to study some dynamical aspects of the nonlinear logarithmic Schr\"odinger equation (NLS-log) on a tadpole graph, namely, a graph consisting of a circle with a half-line attached at a single vertex. By considering Neumann-Kirchhoff boundary conditions at the junction we show the existence and the orbital stability of standing wave solutions with a profile determined by a positive single-lobe state. Via a splitting- eigenvalue method, we identify the Morse index and the nullity index of a specific linearized operator around a  positive single-lobe state. To our knowledge, the results contained in this paper are the first to study the (NLS-log) on tadpole graphs. In particular, our approach has the prospect of being extended to study stability properties of other bound states for the (NLS-log) on a tadpole graph or other non-compact metric graph such as a looping-edge graphs.
\end{abstract}

\qquad\\
\textbf{Mathematics  Subject  Classification (2020)}. Primary
35Q51, 35Q55, 81Q35, 35R02; Secondary 47E05.\\
\textbf{Key  words}. Nonlinear Schr\"odinger model, quantum graphs, standing wave solutions, stability, extension theory of symmetric operators, Sturm Comparison Theorem.


\section{Introduction}

The following Schr\"odinger model with a logarithmic non-linearity (NLS-log)
 \begin{equation}\label{NLSL}
i\partial_t u+\Delta u + uLog |u|^2=0,
\end{equation}
 where $u=u(x, t):\, \mathbb{R}^N\times\mathbb{R}\rightarrow \mathbb{C}$, $N\geq 1$, was introduced in 1976 by Bialynicki-Birula and Mycielski   \cite{BM} who proposed a model of nonlinear wave mechanics to obtain a nonlinear equation which helped to quantify departures from the strictly linear regime, preserving in any number of dimensions some fundamental aspects of quantum mechanics, such as separability and additivity of total energy of noninteracting subsystems. The NLS model in \eqref{NLSL} equation admits applications to dissipative systems \cite{HR}, quantum mechanics, quantum optics \cite{BSS}, nuclear physics \cite{H}, transport and diffusion phenomena (for example, magma transport) \cite{DFGL},
open quantum systems, effective quantum gravity, theory of superfluidity, and Bose-Einstein condensation (see \cite{H, Zlo10}  and the references therein).

The analysis of nonlinear evolution PDEs models on metric graphs has potential applicability in the analysis of physical models for modeling particle and wave dynamics in branched structures and networks. Since branched structures and networks appear in different areas of contemporary physics with many applications in electronics, biology, material science, and nanotechnology, the development of effective modeling tools is important for the many practical problems arising in these areas (see \cite{BeKu} and references therein). Nevertheless, real systems can exhibit strong inhomogeneities due to different nonlinear coefficients in different regions of the spatial domain or to a specific geometry of the spatial domain. Thus, we will chose a ``simple'' metric graph-tool such as the tadpole to discover several characteristics of the NLS-log model.

The  NLS-log ($N=1$) on metric graphs has been studied by several authors in the recent years  (see  \cite{AAr,  Ang2, AngGol17b, Ar0, Ar1} and reference therein). Two basic metric graphs $\Gamma_i$, $i=0,1$,  were examined. For $\Gamma_0=(-\infty, 0)\cup (0, +\infty)$ with boundary $\delta$-or  $\delta'$-interactions at the vertex $\nu=0$, the existence and orbital  (in)stability of standing wave solutions with a  Gausson profile were established. A similar study was also conducted for $\Gamma_1=\bigcup_{j=1}^N (0, +\infty)$, known as a star metric graph with the common vertex $\nu=0$. 

In this work, we study  issues related to  the existence and orbital stability of standing wave solutions of the  NLS-log ($N=1$)  on a tadpole graph, specifically the  vectorial model
\begin{equation}\label{nlslog}
   i\partial_t U + \Delta  U +  U \text{Log}| U|^2 =0.   
 \end{equation}
defined on a graph comprising a ring with one half-line attached at one vertex point (see Figure 1 below).
\begin{figure}[h]
 	\centering
\includegraphics[angle=0,scale=0.4]{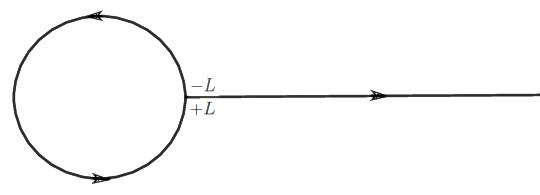} 
 	\caption{Tadpole graph}
\end{figure}


Thus, if in the tadpole graph, the ring is identified by the interval $[-L, L]$ and the semi-infinite line with $[L,+\infty)$, we obtain a metric graph $\mathcal{G}$ with a structure represented by the set $\mathbb E=\{e_0,e_1\}$ where $e_0 = [-L, L]$ and $e_1=[L,+\infty)$, which are the edges of $\mathcal{G}$ and they are connected at the unique vertex $v=L$. $\mathcal{G}$  is also called a lasso graph (see \cite{Exner} and references therein). In this form, we identify any function $U$ on $\mathcal{G}$ (the wave functions) with a collection $U=(u_e)_{e \in  E}$ of functions $u_e$ defined on the edge $e$ of $\mathcal{G}$. In the case of the NLS-log in (\ref{nlslog}), we have $U(x_e,t)=(u_e(x_e,t))_{e \in \mathbb E}$ and the nonlinearity $U \text{Log}|U|^2$, acting componentwise, i.e., for instance $(U \text{Log}|U|^2)_{e} = u_e \text{Log}|u_e|^2$. The action of the Laplacian operator $\Delta$ on the tadpole $\mathcal{G}$ is given by 
\begin{equation} \label{Lap}
    -\Delta : (u_e)_{e \in E} \rightarrow (-u''_e)_{e \in \mathbb E}. 
\end{equation}

Several domains make the Laplacian operator self-adjoint on a tadpole graph (see  Berkolaiko\&Kuchment \cite{BeKu}, Exner {\it et al.} \cite{Exner, ExJ} and Angulo\&Mun\text{$\bar{o}$}z \cite{AM} ). Here, we will consider a domain of general interest in physical applications. In fact, if we denote a wave function $U$ on the tadpole graph $\mathcal{G}$ as $U=(\Phi, \Psi)$, with $\Phi : [-L,L] \rightarrow \mathbb C$ and $\Psi : [L,+\infty ) \rightarrow \mathbb C$, we define  the following domains  for $-\Delta$: 
\begin{equation}\label{bcond}
    D_Z = \{U \in H^2(\mathcal{G}): \Phi(L)=\Phi(-L)=\Psi(L), \text{ and, } \Phi'(L)-\Phi'(-L)= \Psi'(L+) + Z\Psi(L) \},
\end{equation}
with $Z \in \mathbb R$  and  for any $n \ge 0$, $n \in \mathbb N$, 
\begin{equation*}
    H^n(\mathcal{G}) = H^n(-L,L) \oplus H^n(L,+\infty).
\end{equation*}
The boundary conditions in (\ref{bcond}) are called of $\delta$-interaction type if $Z \ne 0$, and of flux-balanced or Neumann-Kirchhoff condition if $Z=0$ (with always continuity at the vertex). It is not difficult to see that $(-\Delta, D_Z)_{Z \in \mathbb R}$ represents a one-parameter family of self-adjoint operators on the tadpole graph $\mathcal{G}$.  We note that is possible  determine other domains where the Laplacian is self-adjoint on a tadpole (see Berkolaiko\&Kuchment\cite{BeKu}). In particular, using  the approach in Angulo\&Mun\text{$\bar{o}$}z \cite{AM}, we can to obtain domains that can be characterized by the following family of 6-parameters of boundary conditions 
\begin{equation}\label{6bc}
     \begin{split}
         &\Phi(-L)=\Phi(L),\;\; (1-m_3)\Phi(L)=m_4\Psi(L)+m_5\Psi'(L),\\
         &\Phi'(L)-2\Phi'(-L)=m_1 \Phi(-L) + A_1\Psi(L)+ A_2\Psi'(L)\ \mbox{and }\\
         &\Phi'(L)-A_3\Psi'(L)=m_6\Phi(-L)+ m_7\Psi(L),
           \end{split}
 \end{equation}
 $A_1=m_4m_6-m_3m_7$, $A_2=m_5m_6-\frac{2m_3+m_7m_5m_3}{m_4}$ and  $A_3=\frac{2+m_7m_5}{m_4}$, $m_1, m_3, m_4, m_5, m_6, m_7\in \mathbb R$ and $m_4\neq 0$. Note that for $m_3=m_5=0$ and $m_4=1$, we get the so-called $\delta$-interaction conditions in \eqref{bcond}. Moreover, for $m_1=m_6=m_7=0$, $m_3=1$, $m_4=2$, and $m_5$ arbitrary, we get the following $\delta'$-interaction type condition on a tadpole graph 
 \begin{equation}\label{deltaprime}
     \begin{split}
         &\Phi'(-L)=\Phi'(L)=\Psi'(L),\;\; \Phi(L)=\Phi(-L)\;\; \mbox{and}\\
          &\Psi(L)=-\frac{m_5}{2}\Psi'(L).
                     \end{split}
 \end{equation}

Now, a problem of general interest is the interaction between standing waves in spatially confined systems and those in large or unbounded reservoirs (so we can say that a tadpole is a configuration that fulfills these characteristics). Our main interest here will be to study some dynamics aspects of (\ref{nlslog}) such as the existence and orbital stability of standing wave solutions given by the profiles $U(x,t)= e^{ict} \Theta(x)$, with $c \in \mathbb R$, $\Theta=(\phi, \psi) \in D_Z$, $Z=0$, and satisfying the NLS-log vectorial equation 

\begin{equation}\label{nlslogv}
    -\Delta \Theta +c\Theta - \Theta \text{Log}(|\Theta|^2)=0. 
\end{equation}
More explicitly, for  $\phi$ and $\psi$ real-valued we obtain the following system, one on the ring and the other one on the half-line respectively, 

\begin{equation}\label{sistema}
 \left\{ \begin{array}{ll}
  -\phi''(x)+ c \phi(x)-\text{Log}(\phi^2(x))\phi(x)=0, \;\;\;\;x\in (-L,L),\\
   -\psi''(x)+ c \psi(x)-\text{Log}(\psi^2(x))\psi(x)=0, \;\;\;\;x\in (L,+\infty),\\
\phi( L)=\phi(-L) =\psi(L),\\
\phi'( L)-\phi'(-L)=\psi'(L+).
  \end{array} \right.
 \end{equation}

The critical challenge in solving  (\ref{sistema}) lies in the component $\phi$ on $[-L, L]$ (we note that explicit profiles for $\phi$ are not known). The component of $\psi$ is given by a translated Gausson-profile (see \cite{Caz83, AC}) of the form  

\begin{equation}\label{perfil}
    \psi_c(x)= e^{\frac{c+1}{2}} e^{\frac{-(x-L+a)^2}{2}}, \ \ \  a \ne 0, \ \ \ c \in \mathbb R, \ \ \ x \geq L.
\end{equation}

Among all profiles for (\ref{sistema}) (see, for instance, Figures 2 and 3 below for the case of $\phi$- profiles), we focus on \textit{positive single-lobe states}. More precisely, we define (see Figure 2).
 
\begin{definition}
    The  standing wave profile $\Theta = (\phi, \psi) \in D_0$ is said to be a positive single-lobe state for (\ref{sistema}) if each component is positive on evry edge of $\mathcal{G}$, the maximum of $\Theta$ is achieved at a single internal point symmetrically located on $[-L, L]$, and $\phi$ is monotonically decreasing on $[0, L]$. Moreover, $\psi$ is strictly decreasing on $[L,+\infty)$. 
\end{definition}

\begin{figure}[h]
 	\centering
\includegraphics[angle=0,scale=0.6]{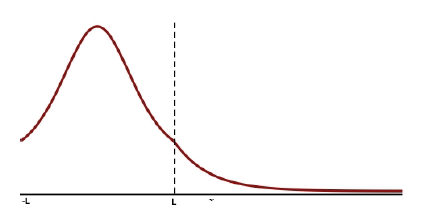} 
 	\caption{A positive single-lobe state profile for the NLS-log model on $\mathcal{G}$}
\end{figure}

The study of   ground or bound states on general metric graphs  for the NLS model with a polynomial nonlinearity
\begin{equation}\label{nlsp}
     i\partial_t U + \Delta  U + | U|^{2p}  U =0, \quad p>0
 \end{equation}
has been investigated in  \cite{ACFN, AC1, AC2, AST, ASTthr, ASTcri, CaFi, NPS, KMP, KNP, KP, NP, symme}. To our knowledge, the results contained in this paper are the first in studying the NLS-log on tadpole graphs.

Our focus in this work is to study the existence and the orbital stability for the NLS-log model of positive single-lobe states in the case $Z=0$ (for the case $Z\neq 0$, we refer  the reader to section 7 below). For the existence, we use  tools from dynamical systems theory for orbits on the plane, based on  the period function introduced in \cite{KMP, KP}. To our knowledge, this approach has not been applied in the literature to logarithmic nonlinearities,  and a non-trivial  analysis will be required. For the stability, we follow the abstract stability framework by Grillakis\&Shatah\&Strauss \cite{GrilSha87}. For clarity, we outline the main steps of this framework for standing wave solutions for NLS-log models on a tadpole graph (see Theorem \ref{main} in the Appendix below). Subsequently, we will present our main results. 

To begin,  we note that the basic symmetry associated to the NLS-log model \eqref{nlslog} on tadpole graphs is the phase invariance:  if $ U$ is a solution of \eqref{nlslog}, then $ e^{i\theta}U$ is also a solution for any $\theta\in [0,2\pi)$. Thus, we  define orbital stability for \eqref{nlslog}  as follows (see \cite{GrilSha87}).

\begin{definition}\label{dsta}
The standing wave $U(x,t) = e^{ic
t}(\mathbf{\phi}(x),  \mathbf{\psi}(x))$ is said to
be \textit{orbitally stable} in a Banach space $X$ if for any
$\varepsilon > 0$
there exists $\eta > 0$ with the following property: if
$U_0 \in X$
satisfies $||U_0-(\Phi, \Psi)||_{X} <\eta$,
then the solution $U(t)$ of (\ref{nlslog}) with $U(0) = U_0$
exists for any
$t\in\mathbb{R}$ and
\[\sup\limits_{t\in \mathbb{R}
}\inf\limits_{\theta\in [0, 2\pi)}||U(t)-
e^{i\theta}(\mathbf{\phi}, \mathbf{\psi})||_{X} < \varepsilon.\]
Otherwise, the standing wave $U(x,t) = e^{ic t}(\mathbf{\phi}(x),  \mathbf{\psi}(x))$ is
said to be \textit{orbitally unstable} in $X$.
\end{definition}

The space $X$ in Definition \ref{dsta} for the model \eqref{nlslog} will depend on the domain of the action of $-\Delta$, namely, $D_{0}$, and a specific weighted space. Indeed, we will consider  the following spaces 
 \begin{equation}\label{E}
\mathcal E(\mathcal G) =\left\{(f,  g)\in H^ 1(\mathcal G):  f(-L)=f(L)=g(L)\right\} \quad\text{``continuous energy-space''},
\end{equation}
and the Banach spaces $W(\mathcal{G})$ and $\widetilde{W}$   defined by 
\begin{equation}\label{WW}
\begin{aligned}
    & W(\mathcal{G})=\{(f,g) \in \mathcal{E}(\mathcal{G}) :  |g|^2\text{Log}|g|^2 \in L^1(L,+\infty)\}\\
   & \widetilde{W}= \{(f,g) \in \mathcal{E}(\mathcal{G}): xg \in L^2(L,+\infty)\}.
\end{aligned}
\end{equation}
We note that $\widetilde{W} \subset W(\mathcal{G})$ (see Lemma \ref{W} below). Due to our local stability analysis  (not variational type, see section 7 for discussion on this approach), we will consider $X=\widetilde{W} $ in Definition \ref{dsta}.

Next, we consider the following two  functionals associated with \eqref{nlslog} 
 \begin{equation}\label{Ener}
E( U)=\|\nabla U\|^2_{L^2(\mathcal G)}- \int_{-L}^L |f_1|^2 \text{Log}(|f_1|^2)dx - \int_{L}^{+\infty} |f_2|^2\text{Log}(|f_2|^2)dx,\qquad (\text{energy})
\end{equation}
 and 
 \begin{equation}\label{mass}
Q( U)=\|  U\|^2_{L^2(\mathcal G)}, \qquad\qquad\qquad\qquad\qquad\qquad\qquad\;\;\; \quad\qquad\quad\;\;\;(\text{mass})
\end{equation}
where $ U=(f_1, f_2)$. These functionals satisfy $E, Q\in C^1(W(\mathcal{G}): \mathbb R)$ (see \cite{Caz83} or Proposition 6.3 in \cite{AAr}) and, at least formally, $E$ is conserved by the flow of \eqref{nlslog}. The use of the space $W(\mathcal{G})$ is because $E$ fails to be continuously differentiable on $\mathcal{E}(\mathcal{G})$ (a proof of this can be based on the ideas in \cite{Caz83}). Now, as our stability theory is based on the framework of Grillakis {\it et al.} \cite{GrilSha87}, $E$ needs to be twice continuously differentiable at the profile $\Theta\in D_0$. To satisfy this condition we introduced the space $\widetilde{W}$. Moreover, this space naturally appears in the following study of the linearization of the action functional around $\Theta$. We note that $E, Q\in C^1(\widetilde{W})$.

Now, for a fixed $c \in \mathbb R$,  let $U_c(x,t) = e^{ic t}(\mathbf{\phi}_c(x), \mathbf{\psi}_c(x))$ be a standing wave solution for  \eqref{nlslog} with $(\phi_c, \psi_c)\in D_{0}$ being a  positive single-lobe state. Then, for the action functional 
\begin{equation}\label{S1}
\mathbf S(U)=E(U)-(c+1) Q(U),\quad\quad U\in \widetilde{W},
\end{equation}
we have  $\mathbf S'(\phi_c, \psi_c)=\mathbf 0$.  Next, for $U=U_1+iU_2$ and $W=W_1+iW_2$, where the functions $U_j$, $W_j$, $j=1,2$, are real. The second variation of $\mathbf S$ in $(\phi_c, \psi_c)$ is  
\begin{equation}\label{S2}
\mathbf S''(\phi_c, \psi_c) (U, W)=\langle \mathcal L_1 U_1, W_1\rangle+\langle\mathcal L_2 U_2, W_2\rangle,
\end{equation}
where the two $2\times 2$-diagonal operators $\mathcal L_{1}$ and $\mathcal L_{2}$  are given for 
\begin{equation}\label{L+}
 \begin{array}{ll}
 &\mathcal{L}_1= diag\left( - \partial_x^2 + (c-2)- \text{Log}|\phi_c|^2, - \partial_x^2 + (c-2)- \text{Log}|\psi_c|^2 \right) \\
 \\
        &\mathcal{L}_2= diag\left( - \partial_x^2 + c- \text{Log}|\phi_c|^2,  - \partial_x^2 + c- \text{Log}|\psi_c|^2 \right)
\end{array} 
 \end{equation}
These  operators are self-adjoint with domain (see Theorem \ref{Oper} below)
$$
\mathcal D:= \{ (f,g) \in D_0: x^2g \in L^2([L,+\infty)) \}.
$$
Since $(\phi_c, \psi_c)\in \mathcal  D$ and satisfies system \eqref{sistema},  $\mathcal L_2(\phi_c, \psi_c)^t= \mathbf 0$,  so the kernel of $\mathcal L_2$ is non-trivial. Moreover, $\langle \mathcal{L}_1(\phi_c, \psi_c)^t, (\phi_c, \psi_c)^t \rangle <0$ implies that the Morse index of $\mathcal{L}_1$, $n(\mathcal{L}_1)$, satisfies $n(\mathcal{L}_1)\geqq 1$. Next, from  \cite{GrilSha87} we know that the Morse index and the nullity index of the operators $\mathcal L_1$ and $\mathcal L_2$ are a fundamental step in deciding about the orbital stability of standing wave solutions.  For the case of the profile $(\mathbf{\phi}_c, \mathbf{\psi}_c)$ being a positive single-lobe state, our  main results are the following:

 \begin{theorem}\label{FrobeN}  Consider the self-adjoint operator $(\mathcal L_1, \mathcal D)$ in \eqref{L+} determined by the positive single-lobe state $(\mathbf{\phi}_c, \mathbf{\psi}_c)$.  Then,
  \begin{enumerate}
  \item[1)] Perron-Frobenius property: let $\beta_0<0$ be the smallest eigenvalue of $\mathcal L_1$ with associated eigenfunction $(f_{\beta_0}, g_{\beta_0})$. Then, $f_{\beta_0}$ is positive and even on $[-L,L]$, and $g_{\beta_0}(x)>0$ with $x\in[L, +\infty)$,
  \item[2)]  $\beta_0$ is simple,
 \item[3)] the Morse index of $\mathcal L_1$ is one,
 \item[4)] The kernel of $\mathcal L_{1}$ on $\mathcal D$ is trivial. 
 \end{enumerate}
\end{theorem}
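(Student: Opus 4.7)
My plan uses two structural facts. First, from the profile equation \eqref{sistema}, $\mathcal L_2(\phi_c,\psi_c)^t=\mathbf 0$, so $\mathcal L_1=\mathcal L_2-2I$ yields the identity
\[
\mathcal L_1(\phi_c,\psi_c)^t=-2(\phi_c,\psi_c)^t,
\]
so $-2$ is an eigenvalue with the strictly positive eigenfunction $(\phi_c,\psi_c)$. Second, because $\psi_c$ is a translated Gausson, the half-line potential simplifies to $(c-2)-\log\psi_c^2=(x-L+a)^2-3$, which grows quadratically at $+\infty$; combined with the boundedness of $V_\phi=(c-2)-\log\phi_c^2$ on the compact ring, this gives $\mathcal L_1$ compact resolvent, so its spectrum is a sequence of isolated real eigenvalues accumulating only at $+\infty$. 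I would also use the reflection $R(f,g)(x):=(f(-x),g(x))$, which commutes with $\mathcal L_1$ because $\phi_c$ is even, producing the orthogonal $\mathcal L_1$-invariant decomposition $L^2(\mathcal G)=\mathcal H_e\oplus\mathcal H_o$ on which the analysis will split.

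For parts (1) and (2), I would verify that the quadratic form $\mathfrak q(U)=\int_{\mathcal G}|U'|^2\,dx+\int_{-L}^{L}V_\phi f^2\,dx+\int_{L}^{+\infty}V_\psi g^2\,dx$ (with $U=(f,g)$) generates a positivity-improving heat semigroup. Integration by parts produces no vertex contribution because the Neumann-Kirchhoff conditions in $D_0$ yield the cancellation $f'(-L)f(-L)-f'(L)f(L)+g'(L)g(L)=0$; hence $\mathfrak q$ is a Dirichlet form on the connected graph $\mathcal G$, and the Beurling--Deny criteria give Perron--Frobenius: $\beta_0$ is simple with a strictly positive eigenfunction. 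Since $(\phi_c,\psi_c)>0$ already realizes eigenvalue $-2$, this forces $\beta_0=-2$ and $(f_{\beta_0},g_{\beta_0})=(\phi_c,\psi_c)$ up to a positive multiple, so $f_{\beta_0}$ is positive and even on $[-L,L]$, while $g_{\beta_0}=\psi_c>0$ on $[L,+\infty)$.

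For part (3) on $\mathcal H_o$, antisymmetry under $R$ forces $g\equiv 0$, and continuity at $v$ forces $f(\pm L)=0$, reducing $\mathcal L_1|_{\mathcal H_o}$ to the Dirichlet Sturm--Liouville problem for $-\partial_x^2+V_\phi$ on $[0,L]$. Differentiating the equation in \eqref{sistema} yields $(-\partial_x^2+V_\phi)\phi_c'=0$ with $\phi_c'(0)=0$ and $\phi_c''(0)<0$; the single-lobe hypothesis and the Kirchhoff jump $2\phi_c'(L)=\psi_c'(L)=-a\psi_c(L)\neq 0$ give $\phi_c'<0$ strictly on $(0,L]$. Hence the IVP solution $u_0=\phi_c'/\phi_c''(0)$ satisfies $u_0(0)=0$, $u_0'(0)=1$, and $u_0>0$ on $(0,L]$, so the Sturm comparison theorem forces $n(\mathcal L_1|_{\mathcal H_o})=0$ and $0\notin\sigma(\mathcal L_1|_{\mathcal H_o})$.

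For part (3) on $\mathcal H_e$, I would factorize any candidate eigenfunction as $V=w\cdot(\phi_c,\psi_c)$, and use $\mathcal L_1(\phi_c,\psi_c)^t=-2(\phi_c,\psi_c)^t$ to recast $\mathcal L_1 V=\beta V$ as the weighted Sturm--Liouville system $-(U^2 w')'=(\beta+2)U^2 w$ on each edge (with $U=(\phi_c,\psi_c)$); the Kirchhoff and evenness relations on $V$ translate into compatible matching for $w$ that annihilates all vertex boundary terms upon integration by parts. Pairing with $w$ yields
\[
\int_{\mathcal G}U^2(w')^2\,dx=(\beta+2)\int_{\mathcal G}U^2 w^2\,dx,
\]
so $\beta\geq -2$, with equality only when $w$ is constant (giving $V\propto(\phi_c,\psi_c)$ and recovering the Perron--Frobenius eigenfunction). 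To sharpen this to $\beta\geq 0$ for any $V\perp(\phi_c,\psi_c)$, I would exploit that the half-line weight $\psi_c^2$ is Gaussian, so the weighted operator restricted to $[L,+\infty)$ is an Ornstein--Uhlenbeck-type operator with spectral gap $2$, and propagate this gap across the vertex via the splitting-eigenvalue method; this gives $n(\mathcal L_1|_{\mathcal H_e})=1$. Part (4) then follows because a kernel element on $\mathcal H_e$ would require the weighted gap to be attained at level exactly $2$, and the propagation argument excludes this value. The main obstacle is precisely the rigorous propagation of the half-line Gaussian spectral gap across the Kirchhoff coupling in the presence of the non-explicit ring profile $\phi_c$, which is the technical heart of the splitting-eigenvalue framework.
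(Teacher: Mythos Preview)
Your observation that $\mathcal L_1=\mathcal L_2-2I$ and hence $\mathcal L_1(\phi_c,\psi_c)^t=-2(\phi_c,\psi_c)^t$ is correct and genuinely streamlines items (1) and (2). The paper never records this identity; instead it proves the Perron--Frobenius property for the abstract ground state $(\eta_{\lambda_0},\zeta_{\lambda_0})$ by a rather intricate case analysis (showing $\zeta_{\lambda_0}\not\equiv 0$, then $\zeta_{\lambda_0}(0)\neq 0$, then positivity via the $\delta$-interaction Perron--Frobenius theorem on the line, and finally positivity of $\eta_{\lambda_0}$ via real-coupled Sturm--Liouville oscillation theory), without ever identifying $\beta_0$ explicitly. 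Your Beurling--Deny/positivity-improving argument on the connected graph, combined with the fact that the strictly positive pair $(\phi_c,\psi_c)$ is already an eigenfunction, pins down $\beta_0=-2$ in one stroke and yields simplicity immediately. Your treatment of the odd sector $\mathcal H_o$ is also correct and is essentially the Sturm comparison with $\phi_c'$ that the paper uses in one sub-step of its Morse-index argument.

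The genuine gap is in your handling of $\mathcal H_e$ for items (3) and (4). The factorization $V=wU$ and the resulting identity $\int_{\mathcal G}U^2(w')^2=(\beta+2)\int_{\mathcal G}U^2 w^2$ only give $\beta\ge -2$; equivalently, they reprove $\mathcal L_2\ge 0$ with simple kernel (which the paper does in exactly this way in its proof of Theorem~\ref{L2}). What you need for (3) and (4) is the much stronger statement that the second eigenvalue of the weighted operator on $\mathcal H_e$ exceeds $2$, i.e.\ a sharp weighted Poincar\'e inequality with constant strictly larger than $2$. You invoke the Ornstein--Uhlenbeck gap on the half-line and propose to ``propagate'' it through the Kirchhoff vertex, but this is precisely where the argument is not supplied: the full-line OU gap is exactly $2$, the half-line gap depends on the boundary condition and the shift $a$, and the coupling to the ring (whose weight $\phi_c^2$ is non-explicit) can in principle lower the gap of the coupled system. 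There is no general mechanism by which a one-edge spectral gap transfers to a graph gap across a Kirchhoff vertex, and you yourself flag this as ``the main obstacle''.

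The paper closes (3) and (4) by a different route that avoids any weighted-gap estimate. Its splitting-eigenvalue Lemma~\ref{split} shows that any eigenfunction at a non-positive eigenvalue with $g(L)\neq 0$ must in fact satisfy \emph{periodic} conditions on the ring and a \emph{Neumann} condition $g'(L)=0$ on the half-line, decoupling the two problems. Then (3) is obtained by contradiction: a hypothetical second negative eigenvalue forces, via the half-line Neumann problem and the $\delta$-interaction Perron--Frobenius theorem, two strictly positive eigenfunctions for the same Neumann problem at distinct eigenvalues, which is impossible by orthogonality. For (4) the paper uses that any $L^2$ kernel element on the half-line is a multiple of $\psi_a'$, and then a Wronskian computation (together with Floquet/oscillation theory on the ring) rules out the nontrivial case. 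If you want to salvage your even-sector strategy, you would need an independent proof of the Poincar\'e inequality $\int_{\mathcal G}U^2(w')^2>2\int_{\mathcal G}U^2 w^2$ for $w\perp 1$ in $L^2(U^2\,dx)$ on $\mathcal H_e$; absent that, the paper's decoupling-plus-oscillation argument is what actually does the work.
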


\begin{theorem}\label{L2} Consider the self-adjoint operator $(\mathcal L_2, \mathcal D)$ in \eqref{L+} determined by the positive single-lobe state $(\mathbf{\phi}_c, \mathbf{\psi}_c)$.  Then,
  \begin{enumerate}
  \item[1)] the kernel of $\mathcal L_2$, $ker(\mathcal L_2)$, satisfies $ker(\mathcal L_2)=span\{(\mathbf{\phi}_c, \mathbf{\psi}_c)\}$.
  \item[2)]  $\mathcal L_2$ is a non-negative operator, $\mathcal L_2\geqq 0$.
  \end{enumerate}
\end{theorem}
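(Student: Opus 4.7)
Since $\mathcal L_2$ is block-diagonal, the analysis decouples on the two edges and is re-coupled only through the vertex conditions of $D_0$. The scheme is to classify the pointwise $L^2$-admissible solutions on each edge, then impose continuity and the Kirchhoff condition, and finally use a Perron--Frobenius argument for non-negativity. I will assume throughout that $\phi_c$ is even on $[-L,L]$ (inherited from the single-lobe symmetry) and that $\psi_c$ is the translated Gausson \eqref{perfil}.

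\textbf{Step 1 (Half-line component).} On $[L,+\infty)$ the equation $-g''+(c-\log|\psi_c|^2)g=0$ becomes, after the substitution $y=x-L+a$, the shifted Hermite-type equation $-g_{yy}+(y^2-1)g=0$. One solution is $\psi_c$ itself (a shifted Gaussian); the linearly independent solution grows like $e^{y^2/2}$, hence fails $x^{2}g\in L^{2}(L,+\infty)$. Therefore any $g$ in $\mathcal D$ annihilated by the half-line component satisfies $g=\alpha\,\psi_c$ for some $\alpha\in\mathbb R$.

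\textbf{Step 2 (Ring component).} On $[-L,L]$ the potential $c-\log|\phi_c|^2$ is even, so one can choose a basis of solutions consisting of $\phi_c$ (even) together with $\eta(x):=\phi_c(x)\int_0^{x}\phi_c(s)^{-2}\,ds$ (odd, from reduction of order). Any solution reads $f=A\,\phi_c+B\,\eta$. The continuity condition $f(L)=f(-L)$ reduces to $2B\,\eta(L)=0$; since $\phi_c>0$ on $[-L,L]$, we have $\eta(L)=\phi_c(L)\int_0^{L}\phi_c(s)^{-2}\,ds>0$, forcing $B=0$. Matching $f(L)=g(L)$ then gives $A=\alpha$, and the Kirchhoff balance $f'(L)-f'(-L)=g'(L+)$ is automatically satisfied because $(\phi_c,\psi_c)$ already obeys \eqref{sistema}. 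Hence $\ker(\mathcal L_2)=\mathrm{span}\{(\phi_c,\psi_c)\}$, proving (1).

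\textbf{Step 3 (Non-negativity).} Because the potential $c-\log|\psi_c|^2$ grows quadratically on the half-line, $\mathcal L_2$ has compact resolvent and purely discrete spectrum accumulating only at $+\infty$. I would invoke a Perron--Frobenius property for $\mathcal L_2$, analogous to the one established for $\mathcal L_1$ in Theorem \ref{FrobeN}: the lowest eigenvalue $\mu_0$ of $\mathcal L_2$ is simple and admits a strictly positive eigenfunction $\Xi_0=(f_0,g_0)$ on $\mathcal G$. Now $(\phi_c,\psi_c)\in\ker(\mathcal L_2)$ is itself strictly positive on $\mathcal G$. If $\mu_0<0$, eigenfunctions for distinct eigenvalues would be orthogonal, giving $\langle\Xi_0,(\phi_c,\psi_c)\rangle=0$, which is impossible since both are strictly positive. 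Therefore $\mu_0\geq 0$, and by (1) necessarily $\mu_0=0$, proving $\mathcal L_2\geqq 0$.

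\textbf{Expected main obstacle.} Step 3 rests on transferring a Perron--Frobenius principle to the diagonal operator $\mathcal L_2$ on the tadpole with Kirchhoff coupling. The same splitting-eigenvalue technique used to prove Theorem \ref{FrobeN} should apply, but one must verify that the vertex coupling does not destroy positivity-improvingness of the associated semigroup and that the even/odd decomposition of the ring component plus the monotone $\psi_c$-tail force the ground eigenfunction to be sign-definite on $\mathcal G$. Once that positivity of the ground state is secured, the contradiction argument above closes the proof cleanly.
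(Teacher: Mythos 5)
Your Steps 1--2 are a correct (and legitimately different) way to get item 1: classifying the zero-energy solutions edge by edge and imposing the vertex conditions of $D_0$ does pin the kernel down to $\mathrm{span}\{(\phi_c,\psi_c)\}$, whereas the paper obtains the kernel as the equality case of a quadratic-form identity. The problem is Step 3. There you invoke a Perron--Frobenius property for $(\mathcal L_2,\mathcal D)$, but Theorem \ref{FrobeN} is proved only for $\mathcal L_1$; nothing in the paper gives you this statement for $\mathcal L_2$, and obtaining it by ``the same splitting-eigenvalue technique'' is not a one-line remark --- it would mean redoing essentially all of Section 4.1.1 (unfolding to a $\delta$-interaction operator on the line, the Krein--von Neumann deficiency-index count, the real coupled $(RC_\alpha)$ problem and the oscillation theorems) for the new potentials. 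You flag this yourself as the ``main obstacle,'' which is exactly where the proposal stops short of a proof: as written, item 2 rests on an unproven lemma.

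The gap is avoidable, because the very fact you already have in hand --- that the kernel element $(\phi_c,\psi_c)$ is \emph{strictly positive} on $\mathcal G$ --- lets you bypass spectral machinery entirely. This is what the paper does: since $\phi_c>0$ on $[-L,L]$ and $\psi_c>0$ on $[L,+\infty)$, one has the ground-state (Jacobi) factorization
\begin{equation*}
\Big(-\partial_x^2+c-\mathrm{Log}\,\phi_c^{2}\Big)f=-\frac{1}{\phi_c}\frac{d}{dx}\Big[\phi_c^{2}\,\frac{d}{dx}\Big(\frac{f}{\phi_c}\Big)\Big],
\qquad
\Big(-\partial_x^2+c-\mathrm{Log}\,\psi_c^{2}\Big)g=-\frac{1}{\psi_c}\frac{d}{dx}\Big[\psi_c^{2}\,\frac{d}{dx}\Big(\frac{g}{\psi_c}\Big)\Big],
\end{equation*}
and integrating against $(f,g)\in\mathcal D$ the boundary terms cancel exactly because both $(f,g)$ and $(\phi_c,\psi_c)$ satisfy the continuity and Kirchhoff conditions at $\nu=L$; hence $\langle\mathcal L_2V,V\rangle=\int_{-L}^{L}\phi_c^{2}\big[(f/\phi_c)'\big]^{2}dx+\int_{L}^{+\infty}\psi_c^{2}\big[(g/\psi_c)'\big]^{2}dx\geqq0$, with equality iff $f=d_1\phi_c$, $g=d_2\psi_c$ and $d_1=d_2$ by continuity at the vertex. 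This gives items 1 and 2 simultaneously in a few lines, with no compact-resolvent claim, no discreteness of the spectrum, and no Perron--Frobenius input. If you prefer to keep your Step 3, you must actually prove simplicity and positivity of the ground state of $(\mathcal L_2,\mathcal D)$ (the transfer is plausible --- for $\mathcal L_2$ the unfolded half-line comparison function is $\psi_{even}>0$ rather than $\psi'_{even}$, which even simplifies the non-negativity of the symmetric restriction --- but it has to be written out); as it stands, that step is a genuine gap.
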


The proof of Theorem \ref{FrobeN} will be based on a  {\it{splitting eigenvalue method}} applied to  $\mathcal L_1:=\text{diag}(\mathcal L_1^0, \mathcal L_1^1)$ in \eqref{L+} on a tadpole graph (see Lemma \ref{split}). More precisely, we reduce the eigenvalue problem associated with $\mathcal L_1$ with domain $\mathcal D$   to two classes of eigenvalue problems, one with periodic boundary conditions on $[-L, L]$ for $\mathcal L_1^0$ and the other one with Neumann boundary conditions on  $=[L, +\infty)$ for $\mathcal L_1^1$. Thus,  by using tools of the extension theory of Krein-von Neumann for symmetric operators, the theory of real coupled self-adjoint boundary conditions on $[-L, L]$ and the Sturm Comparison Theorem, will lead to our results.

Our orbital stability result  is the following:
\begin{theorem}\label{stability}
    Consider $Z=0$ in (\ref{sistema}). Then, there exists a $C^1$-mapping $c \in \mathbb R \rightarrow \Theta_c= (\phi_c, \psi_c)$ of positive single-lobe states on $\mathcal{G}$. Moreover, for $c \in \mathbb R$,  the orbit 
$$
\{e^{i\theta} \Theta_c : \theta \in [0,2\pi) \}
$$
is stable.  
\end{theorem}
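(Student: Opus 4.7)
The plan is to apply the abstract orbital stability framework of Grillakis--Shatah--Strauss, as stated in Theorem \ref{main} in the Appendix, with the working space $X=\widetilde W$. The hypotheses that need to be verified are: (i) local/global well-posedness of the Cauchy problem for \eqref{nlslog} on $\widetilde W$ in a neighbourhood of the orbit, together with conservation of $E$ and $Q$; (ii) the existence of a $C^1$-curve $c\mapsto \Theta_c=(\phi_c,\psi_c)\in \mathcal D$ of critical points of the action $\mathbf S$ in \eqref{S1}; (iii) the spectral conditions $n(\mathcal L_1)=1$, $\ker(\mathcal L_1)=\{0\}$, $\ker(\mathcal L_2)=\Span\{\Theta_c\}$ and $\mathcal L_2\geqq 0$; and (iv) the Vakhitov--Kolokolov slope condition on $d(c):=\mathbf S(\Theta_c)=E(\Theta_c)-(c+1)Q(\Theta_c)$. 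Items (iii) are exactly the content of Theorems \ref{FrobeN} and \ref{L2}, so no further work is required there. For (i), I would rely on the standard contraction/Strichartz argument for NLS--log adapted from Cazenave's treatment and its extension to metric graphs in \cite{AAr}, noting that the weighted condition $xg\in L^2(L,+\infty)$ is propagated by the flow through the standard virial-type identity.

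For the smooth curve in (ii), I would use the Implicit Function Theorem applied to the map $F:\mathbb R\times \mathcal D\to L^2(\mathcal G)$, $F(c,\Theta)=-\Delta\Theta+c\Theta-\Theta\logg|\Theta|^2$. The existence of some positive single-lobe state at an initial frequency $c_0$ is given by the period-function construction recalled before the statement of Theorem \ref{FrobeN}. The linearization $D_\Theta F(c_0,\Theta_{c_0})$ equals $\mathcal L_2$; by Theorem \ref{L2} its kernel is exactly $\Span\{\Theta_{c_0}\}$. Working on the $L^2$-orthogonal complement of $\Theta_{c_0}$ makes $\mathcal L_2$ an isomorphism onto $\{\Theta_{c_0}\}^\perp$, and since $\partial_c F(c_0,\Theta_{c_0})=\Theta_{c_0}\notin \text{Range}(\mathcal L_2|_{\{\Theta_{c_0}\}^\perp})$, one applies a Lyapunov--Schmidt reduction to produce a locally $C^1$-curve $c\mapsto \Theta_c$ in $\mathcal D\hookrightarrow \widetilde W$. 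The monotonicity and symmetry properties defining a single-lobe state are preserved under small perturbations, so the curve stays in the class.

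The main obstacle lies in step (iv): verifying $d''(c)>0$ (in the sign convention dictated by $\mathbf S=E-(c+1)Q$), equivalently controlling the sign of $\frac{d}{dc}\|\Theta_c\|_{L^2(\mathcal G)}^2$. I would split
\begin{equation*}
\|\Theta_c\|_{L^2(\mathcal G)}^2 = \int_{-L}^{L}\phi_c^2(x)\,dx + \int_{L}^{+\infty}\psi_c^2(x)\,dx.
\end{equation*}
The half-line piece is explicit from \eqref{perfil}: $\int_L^\infty \psi_c^2\,dx=\tfrac{\sqrt\pi}{2}e^{c+1}\,\mathrm{erfc}(a(c))$, and a direct differentiation isolates the dominant exponential growth in $c$. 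The ring piece is harder because $\phi_c$ is only known implicitly; here I would differentiate the ODE with respect to $c$, obtaining $\mathcal L_1^0\partial_c\phi_c=\phi_c$, then use the Fredholm relation
\begin{equation*}
\frac{d}{dc}\int_{-L}^L \phi_c^2\,dx = 2\langle \partial_c\phi_c,\phi_c\rangle = 2\langle (\mathcal L_1^0)^{-1}\phi_c,\phi_c\rangle + (\text{boundary corrections from the matching conditions at }v=L),
\end{equation*}
and estimate the right-hand side via the period-function machinery of \cite{KMP,KP}, which, combined with the Morse-index-one and Perron--Frobenius information from Theorem \ref{FrobeN}, yields the sign of the derivative. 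I expect the delicate point to be the handling of the matching contributions at the junction, since the derivative of the action mixes the compact and non-compact components; a careful computation using \eqref{sistema} and the boundary conditions in $D_0$ should reduce these boundary terms to a quantity of controlled sign. Once $d''(c)>0$ is established, the abstract theorem furnishes the desired stability of the orbit $\{e^{i\theta}\Theta_c:\theta\in[0,2\pi)\}$ in $\widetilde W$.
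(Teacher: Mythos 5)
Your outline of the reduction to Theorem \ref{main} is the right skeleton, and delegating the spectral hypotheses to Theorems \ref{FrobeN}--\ref{L2} matches the paper. However, step (ii) contains a concrete error: the linearization of $F(c,\Theta)=-\Delta\Theta+c\Theta-\Theta\logg|\Theta|^2$ at a real profile, in the direction of real perturbations $h$, is $-\Delta h+ch-(\logg|\Theta_c|^2+2)h=\mathcal L_1h$, \emph{not} $\mathcal L_2h$ ($\mathcal L_2$ only arises from the imaginary/phase direction). With your identification the scheme actually collapses: since $\mathcal L_2$ is self-adjoint with $\ker(\mathcal L_2)=\Span\{\Theta_{c_0}\}$, its range is $\{\Theta_{c_0}\}^{\perp}$, so the linearized equation $\mathcal L_2\dot\Theta=-\partial_cF=-\Theta_{c_0}$ is unsolvable and the Lyapunov--Schmidt reduction does not produce a branch. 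The fix inside your own scheme is to use $\mathcal L_1$ together with item 4) of Theorem \ref{FrobeN} (trivial kernel), so the plain implicit function theorem applies; even then you only obtain a local branch and still owe an argument that it continues to all $c\in\mathbb R$ and remains in the single-lobe class.

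The more serious gap is step (iv): you never establish $\frac{d}{dc}\|\Theta_c\|^2>0$; the proposed computation via $\langle(\mathcal L_1^0)^{-1}\phi_c,\phi_c\rangle$ plus junction ``boundary corrections of controlled sign'' is precisely the hard part and is left as an expectation. The paper avoids this entirely by exploiting the scaling structure special to the logarithmic nonlinearity: writing $\phi_c=e^{(c-1)/2}\phi_1$ and $\psi_c(x)=e^{(c+1)/2}e^{-(x-L+a_0)^2/2}$ reduces \eqref{sistema} to a $c$-independent boundary-value problem on the ring, which is solved (uniquely) through the monotonicity of the period function (Lemma \ref{periodomono}); this yields the \emph{whole} family for every $c\in\mathbb R$ at once, manifestly $C^1$ in $c$, and gives $\mu(c)=\|\Theta_c\|^2=e^{c-1}\cdot\mathrm{const}$, hence $\mu'(c)=\mu(c)>0$ with no Fredholm inversion or junction bookkeeping --- this is exactly why the paper calls the Vakhitov--Kolokolov condition ``essentially trivial'' here. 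A minor further point: global well-posedness in $\widetilde W$ cannot be obtained by a ``standard contraction/Strichartz argument,'' because $u\logg|u|^2$ is not locally Lipschitz at $u=0$; the paper's Theorem \ref{global} proceeds by the Cazenave--Haraux truncation of the nonlinearity, compactness (Aubin--Lions), and propagation of the weight as in Lemma 7.6.2 of \cite{Caz98}. As written, therefore, both the construction of the $C^1$ family and the slope condition are incomplete or incorrect, and these are the two ingredients your proof cannot simply cite away.
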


The proof of Theorems \ref{FrobeN}-\ref{L2} are given in Section 4. The  orbital stability statement follows from Theorems \ref{FrobeN}-\ref{L2} and  the abstract stability framework  by Grillakis\&Shatah\&Strauss in \cite{GrilSha87}.  For the convenience of the reader, we provide an adaptation of the abstract results in \cite{GrilSha87} to the case of tadpole graphs in Theorem \ref{main} (Appendix). We note that the main challenge in  applying  Theorem \ref{main} lies  in spectral analysis, as the Vakhito-Kolokolov condition $\frac{d}{dc} \|(\phi_c, \psi_c)\|^2>0$ is essentially trivial for the NLS-log model. Definition 1.2 requires {\it a priori} information about the local/global well-posedness of the Cauchy problem for \eqref{nlslog}, which is established in Section 2 for   the space $\widetilde{W}$ in \eqref{espaceW}.   

The  existence of a  $C^1$-mapping  for positive single-lobe states in Theorem \ref{stability} relies on  dynamical systems theory for planar orbits  via the period function for second-order differential equations (see  \cite{AC1, AC2, KNP, KMP, KP, NP} and reference therein)

We would like to point out that our approach for studying positive single-lobe states for the NLS-log on a tadpole has prospects of being used  to study other standing wave profiles, such as positive two-lobe states (see  Angulo \cite{AC1} and Figure 3) or the NLS-log on others metric graphs such as looping edge graphs, a graph consisting of a circle with several half-lines attached at a single vertex (see Figure 7 in Section 7 ). 
 \begin{figure}[h]
 	\centering
\includegraphics[angle=0,scale=0.4]{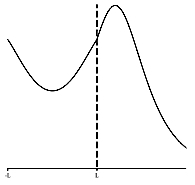}
\caption{A positive two-lobe state profile on a tadpole graph}
\end{figure}

The paper is organized as follows.  In Section 2,  we establish  local and global well-posedness results for the NLS-log model on a tadpole graph. In Section 3,  we show the linearization of the NLS-log around a positive single-lobe state and its relation to the self-adjoint operators $\mathcal L_1, \mathcal L_2$. In Section 4,  we show Theorems \ref{FrobeN}-\ref{L2}  via our splitting eigenvalue lemma (Lemma \ref{split}). In Section 5, we provide the proof of the existence of positive single-lobe states and their stability under the NLS-log flow. In the  Appendix, we briefly outline   tools from  Krein-von Neumann extension theory, a Perron-Frobenius property for $\delta$-interaction Schr\"odinger operators on the line, and the orbital stability criterion from Grillakis\&Shath\&Strauss  \cite{GrilSha87} adapted to our framework.

\vskip0.2in

\noindent \textbf{Notation.} Let $-\infty\leq a<b\leq +\infty$. We denote by $L^2(a,b)$  the  Hilbert space equipped with the inner product $(u,v)=\int\limits_a^b u(x)\overline{v(x)}dx$.
 By $H^n(\Omega)$  we denote the classical  Sobolev spaces on $\Omega\subset \mathbb R$ with the usual norm.   We denote by  $\mathcal{G}$ the tadpole graph parametrized by the set of edges
  $ \mathbb {E} =\{e_0, e_1\}$, where $e_0=[-L,L]$ and  $e_1=[L, +\infty)$, and attached to the common vertex $\nu=L$. On the graph $\mathcal{G}$  we define the  spaces 
  $$
 L^ p(\mathcal G)=L^ p(-L, L) \oplus L^p( L, +\infty), \;\; \,p>1,
 $$   
with the natural norms. Also, for $U= (u_1, g_1), V= (v_1, h_1)\in L^2(\mathcal G)$, the inner product on $L^2(\mathcal G)$ is defined by
$$
\langle U, V \rangle=\int_{-L}^L u_1(x)\overline{v_1(x)}dx +  \int_L^{+\infty} g_1(x)\overline{h_1(x)}dx. 
$$
 Let $A$ be a closed densely defined symmetric operator in the Hilbert space $H$. The domain of $A$ is denoted by $D(A)$. The deficiency indices of $A$ are denoted by  $n_\pm(A):=\dim ker(A^*\mp iI)$, with $A^*$ denoting the adjoint operator of $A$.  The Morse index of $A$, denoted by  $n(A)$, is the number of negative eigenvalues counting multiplicities.


\section{Global well-posedness in  $\widetilde{W}$}

In this section, we show that the Cauchy problem associated with the  NLS-log model on a tadpole graph is globally well-posed in the space $\widetilde{W}$. From Definition \eqref{dsta} this information is crucial for the stability theory. We start with the following technical result.

\begin{lemma}\label{W}
Let $W(\mathcal{G})$ and $\widetilde{W}$ be the Banach spaces defined by 

\begin{equation}\label{espaceW}
\begin{aligned}
    & W(\mathcal{G})=\{(f,g) \in \mathcal{E}(\mathcal{G}) : |g|^2\text{Log} |g|^2 \in L^1(L,+\infty)\}, \\
   & \widetilde{W}= \{(f,g) \in \mathcal{E}(\mathcal{G}): xg \in L^2(L,+\infty)\}.
\end{aligned}
\end{equation}
Then $\widetilde{W} \subset W(\mathcal{G})$.
    
\end{lemma}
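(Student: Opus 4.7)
The plan is to split the integration domain $(L,+\infty)$ into the two regions $A=\{x:|g(x)|\ge 1\}$ and $B=\{x:0<|g(x)|<1\}$ (the zero set of $g$ contributes nothing, since $t\,\text{Log}\,t\to 0$ as $t\to 0^+$), and to bound the $L^1$-norm of $|g|^2\,\text{Log}|g|^2$ on each piece separately. Since $(f,g)\in\mathcal{E}(\mathcal{G})$ forces $g\in H^1(L,+\infty)$, the one-dimensional Sobolev embedding gives $\|g\|_{L^\infty}<\infty$ and $g(x)\to 0$ as $x\to+\infty$. Consequently, on $A$ the logarithm is bounded from above by $2\,\text{Log}\|g\|_{L^\infty}$, so this contribution is controlled by $2\,\text{Log}\|g\|_{L^\infty}\cdot\|g\|_{L^2}^2$ (with the convention that $A$ is empty when $\|g\|_{L^\infty}<1$).

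The nontrivial part is the region $B$, where $\text{Log}|g|^2<0$ and $|\text{Log}|g|^2|$ blows up wherever $g$ becomes small, and in particular as $x\to+\infty$. Here I would invoke the elementary inequality that for every $\varepsilon\in(0,2)$ there exists $C_\varepsilon>0$ with $|t\,\text{Log}\,t|\le C_\varepsilon\,t^{1-\varepsilon/2}$ for $t\in(0,1]$; taking $t=|g|^2$ yields $|g|^2\,|\text{Log}|g|^2|\le C_\varepsilon\,|g|^{2-\varepsilon}$ on $B$. To integrate $|g|^{2-\varepsilon}$ over the infinite tail I would factor it as $(x|g|)^{2-\varepsilon}\,x^{-(2-\varepsilon)}$ and apply H\"older with conjugate exponents $p=\tfrac{2}{2-\varepsilon}$ and $p'=\tfrac{2}{\varepsilon}$, obtaining
$$\int_L^{+\infty}|g|^{2-\varepsilon}\,dx\le \|xg\|_{L^2}^{2-\varepsilon}\left(\int_L^{+\infty}x^{-2(2-\varepsilon)/\varepsilon}\,dx\right)^{\varepsilon/2}.$$
The second factor converges for any $\varepsilon\in(0,4/3)$ (since $L>0$ rules out issues at the lower endpoint), while the first factor is finite by the hypothesis $xg\in L^2(L,+\infty)$.

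The main obstacle is precisely this small-amplitude region: the logarithmic singularity of $\text{Log}|g|^2$ near zeros of $g$ and at infinity forces one to trade a small power of $|g|$ against polynomial decay in $x$, which is exactly what the weighted condition $xg\in L^2$ supplies. The definition of $\widetilde{W}$ is tailor-made to make the H\"older step close; merely assuming $g\in\mathcal{E}(\mathcal{G})$, which only yields $g\in L^2\cap L^\infty$, is insufficient to control the tail of $|g|^{2-\varepsilon}$ at infinity.
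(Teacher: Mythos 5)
Your proof is correct and follows essentially the same route as the paper: split according to the size of $|g|$, bound the logarithm by a power of $|g|$ on the small-amplitude set, and use the weight $xg\in L^2(L,+\infty)$ together with the integrability of a negative power of $x$ on $(L,+\infty)$ (here $L>0$) to close the tail estimate. Your H\"older step with general $\varepsilon$ reduces at $\varepsilon=1$ to the paper's Cauchy--Schwarz pairing of $x|g|$ against $1/x$, and the treatment of the region $|g|\ge 1$ via $\|g\|_{L^\infty}$ is only a cosmetic variant of the paper's restriction to a compact interval.
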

\begin{proof}
\begin{itemize}
\item[1)] Let's start by proving that $\widetilde{W} \subset L^1(\mathcal{G})$. 
For $(\varphi,\xi) \in \widetilde{W}$ and the Cauchy-Schwarz inequality we have 

\begin{equation*}
\begin{aligned}
    \int_{-L}^L |\varphi|dx + \int_L^{+\infty} |\xi| dx &=  \int_{-L}^L |\varphi|dx + \int_L^{+\infty} x|\xi|\frac{1}{x} dx \\
    &\le 2L\sup_{[-L,L]} |\varphi| +  \left( \int_L^{+\infty} x^2|\xi|^2 dx \right)^\frac{1}{2}  \left( \int_L^{+\infty} \frac{1}{x^2} dx \right)^\frac{1}{2} <\infty. 
\end{aligned}
\end{equation*}
\item[2)] Let again $(\varphi,\xi) \in \widetilde{W}$, then 
\begin{equation}\label{radio1}
    \int_L^{+\infty} |\xi|^2 | \text{Log} |\xi|| dx =  \int_{\{x \in [L,+\infty): |\xi(x)|<1\}} |\xi|^2| \text{Log}|\xi|| dx +  \int_{\{x \in [L,+\infty): |\xi(x)|\ge 1\}} |\xi|^2| \text{Log}|\xi|| dx.
\end{equation}

Note also that 
\begin{equation}\label{plog}
    | \text{Log}|\xi (x)||<\frac{1}{|\xi(x)|} \ \ \text{for} \ \ |\xi(x)|<1, \ \ \text{and} \ \ | \text{Log}|\xi(x)||< |\xi(x)|  \ \ \text{for} \ \ |\xi(x)|\ge 1.
\end{equation}

Since $\xi \in H^1([L,+\infty))$, there exists $k>L$ such that $|\xi|<1$ for $[L,+\infty)\setminus [L,k]$. Thus, from (\ref{radio1}), (\ref{plog}), and the inclusion $\widetilde{W} \subset L^1(\mathcal{G})$ we get 

\begin{equation*}
    \begin{aligned}
        \int_L^{+\infty} |\xi|^2|Log|\xi|| dx &\le  \int_{\{x \in (k,+\infty): |\xi|<1\}} |\xi| dx + \int_{\{x \in [L,k]: |\xi|< 1\}} |\xi| dx +\int_{\{x \in [L,k]: |\xi|\ge 1\}} |\xi|^3 dx  \\
        &\le \int_{\{x \in (k,+\infty): |\xi|<1\}} |\xi| dx +(k-L)+(k-L)\sup_{[L,k]} |\xi|^3 < \infty.
    \end{aligned}
\end{equation*}
\end{itemize}
The assertion is proved. 
\end{proof}

\begin{theorem}\label{global}
For any $U_0\in \widetilde{W}$, there is a unique solution $U \in C(\mathbb R,\widetilde{W})$ of (\ref{nlslog}) such that $U(0)=U_0$.  Furthermore, the conservation of energy and mass hold, {\it i.e}, for any $t\in \mathbb R$, we have $E(U(t))=E(U_0)$ and $Q(U(t))=Q(U_0)$.
\end{theorem}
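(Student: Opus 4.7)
The plan is to adapt the Cazenave framework (\cite{Caz83}) for the logarithmic Schr\"odinger equation to the tadpole graph setting. The scheme consists of regularizing the non-Lipschitz nonlinearity $f(z) = z\log|z|^2$, constructing solutions of the regularized equation via the unitary group generated by $(-\Delta, D_0)$, and passing to the limit using the monotonicity bound
$$
\bigl|\operatorname{Im}\bigl[(f(z) - f(w))\overline{(z-w)}\bigr]\bigr| \leq 4\,|z-w|^2,\quad z,w\in\mathbb{C},
$$
which is independent of the regularization parameter and which simultaneously delivers uniqueness.

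First, since $(-\Delta, D_0)$ is self-adjoint on $L^2(\mathcal{G})$ (recalled after \eqref{bcond}), Stone's theorem furnishes a unitary group $\{S(t)\}_{t\in\mathbb{R}}$ on $L^2(\mathcal{G})$ which, by functional calculus, restricts to a strongly continuous group on the form domain $\mathcal{E}(\mathcal{G})$. For each $\varepsilon>0$ I would take a Lipschitz approximation $f_\varepsilon$ of $f$ (for instance $f_\varepsilon(z) = z\log\frac{(\varepsilon + |z|)^2}{(1 + \varepsilon|z|)^2}$) and solve the regularized integral equation
$$
U_\varepsilon(t) = S(t)U_0 + i \int_0^t S(t-s) f_\varepsilon(U_\varepsilon(s))\,ds
$$
in $C(\mathbb{R}, \mathcal{E}(\mathcal{G}))$ via the standard Banach contraction, yielding global approximate solutions that conserve the regularized mass and energy.

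Second, the monotonicity bound, being uniform in $\varepsilon$, shows that $\{U_\varepsilon\}$ is Cauchy in $C([-T,T]; L^2(\mathcal{G}))$ for each $T>0$; the uniform-in-$\varepsilon$ energy bound, combined with a logarithmic Sobolev inequality to control the nonlinear part of $E_\varepsilon$ by $Q(U_\varepsilon)$ and $\|\nabla U_\varepsilon\|_{L^2}^2$, upgrades this to weak-$*$ convergence in $L^\infty_{\mathrm{loc}}(\mathbb{R}; \mathcal{E}(\mathcal{G}))$. The limit $U$ inherits mass conservation at once, and energy conservation follows after invoking weak lower semicontinuity of the Dirichlet part together with almost-everywhere convergence on the logarithmic terms (Lemma \ref{W} is used to guarantee that those terms stay integrable).

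The remaining and hardest step will be propagating the weight, i.e., showing that $xU(\cdot,t) \in L^2(L,+\infty)$ whenever $U_0 \in \widetilde{W}$. Differentiating $\int_L^{+\infty} x^2 |U_\varepsilon(x,t)|^2\,dx$ along the approximate equation produces a bulk term $4\operatorname{Im}\int_L^{+\infty} x U_\varepsilon \partial_x \overline{U_\varepsilon}\,dx$ (controlled by Cauchy--Schwarz and the uniform $\mathcal{E}(\mathcal{G})$ bound) plus a boundary contribution at $x = L$. The main obstacle is taming this boundary term: one has to combine the Kirchhoff condition $\Psi'(L+) = \Phi'(L) - \Phi'(-L)$ with the continuity $\Phi(\pm L) = \Psi(L)$ built into $D_0$, so that the trace terms at the junction either cancel or are absorbed into an estimate closable by Gronwall's inequality, uniformly in $\varepsilon$. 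Once achieved, passing to the limit gives $U \in C(\mathbb{R}, \widetilde{W})$, and global existence is then automatic, since $Q$ is conserved and the logarithmic Sobolev inequality bounds $\|\nabla U(t)\|_{L^2}$ in terms of the conserved $E(U_0)$ and $Q(U_0)$, precluding any finite-time blow-up.
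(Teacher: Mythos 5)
Your overall strategy coincides with the paper's: both follow the Cazenave scheme of regularizing the logarithmic nonlinearity, solving the regularized problem globally in $\mathcal{E}(\mathcal{G})$ using the self-adjointness of $(-\Delta,D_0)$, passing to the limit with uniform energy/mass bounds, and finally propagating the weight $xg\in L^2(L,+\infty)$ by the argument of Lemma 7.6.2 in \cite{Caz98}. The only methodological difference is minor: the paper truncates the logarithm ($f_n=\inf\{n,\sup\{-n,\mathrm{Log}\}\}$) and extracts the limit by an Aubin--Lions compactness argument on truncated graphs $\mathcal{E}_R$, whereas you use the $\varepsilon$-regularization of \cite{Caz83} together with the monotonicity inequality to get a Cauchy sequence in $C([-T,T];L^2(\mathcal{G}))$, which has the advantage of delivering uniqueness explicitly (the paper's written proof never addresses uniqueness beyond the citation of Cazenave's framework).

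The genuine gap is in the step you yourself flag as the hardest one, and your proposed fallback would not close. If you differentiate only $\int_L^{+\infty}x^2|U_\varepsilon|^2\,dx$, the boundary contribution is $2L^2\,\mathrm{Im}\bigl[\overline{\Psi_\varepsilon}(L)\,\partial_x\Psi_\varepsilon(L+)\bigr]$, and the derivative trace $\partial_x\Psi_\varepsilon(L+)$ is \emph{not} controlled by the uniform $\mathcal{E}(\mathcal{G})$ bound (an $H^1$ bound gives a trace for the function, not for its derivative), so ``Cauchy--Schwarz plus Gronwall'' cannot absorb it. The correct mechanism is exact cancellation at the vertex: extend the weight to the whole graph by taking it constant ($\equiv L$) on the ring and equal to $x$ on the half-line, and differentiate the weighted mass over all of $\mathcal{G}$. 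Summing the integrations by parts over the two edges, the junction terms combine into
\begin{equation*}
L^2\,\mathrm{Im}\Bigl[\overline{\Psi_\varepsilon}(L)\bigl(\partial_x\Phi_\varepsilon(L)-\partial_x\Phi_\varepsilon(-L)-\partial_x\Psi_\varepsilon(L+)\bigr)\Bigr]=0,
\end{equation*}
by continuity at the vertex together with the Kirchhoff condition, leaving only the bulk term $4\,\mathrm{Im}\int_L^{+\infty}x\,\overline{U_\varepsilon}\,\partial_xU_\varepsilon\,dx$, which your Cauchy--Schwarz/Gronwall argument does handle. Two further points must be supplied for this to be rigorous: the approximate solutions are only in $C(\mathbb{R},\mathcal{E}(\mathcal{G}))\cap C^1(\mathbb{R},\mathcal{E}(\mathcal{G})')$, so the integration by parts (which uses second derivatives and the vertex conditions of $D_0$) must be justified by approximating the data in $D_0$ or by a further regularization, exactly as in Lemma 7.6.2 of \cite{Caz98}; and the Kirchhoff condition is only available for these more regular approximations, not at the bare $H^1$ level. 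With the vertex-compatible weight and this regularization step your outline becomes a complete proof; as written, the weight-propagation step is an unproven hope rather than an argument.
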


\begin{proof}
The idea of the proof is to use the strategy proposed in \cite{Caz80}-section 2 (see also \cite{Caz03, Caz98})  and adapted to the NLS-log model on the tadpole graph. As the analysis follows the Cazenave's framework which involves multiples steps, we  highlight the key modifications for our case. In this way, we introduce the ``reduced'' Cauchy problem
\begin{equation}\label{reduced}
 \left\{ \begin{array}{ll}
    i\partial_t U_n + \Delta  U_n+  U_n  F_n (| U_n|^2)=  0,\\
   U_n(0)=  U_0\in \widetilde{W},
  \end{array} \right.
 \end{equation}
where for $ U_n=(u_n,v_n)$, $ U_n  F_n (| U_n|^2)=(u_n f_n(|u_n|^2), v_n f_n(|v_n|^2))$,  $f_n(s)=\inf\{n, \text{sup}\{-n, f(s)\}\}$, and $f(s)=\text{Log}(s)$, $s>0$. Initially, we show that the problem in \eqref{reduced} has a unique global solution for any $ U_0\in \mathcal E(\mathcal G)$ and fixed $n$. To use  Theorem 3.3.1 in \cite{Caz03}, define the mapping  $g_n: L^2(\mathcal G)\to L^2(\mathcal G)$, $g_n(U)=U  F_n (| U|^2)$, which is  Lipschitz continuous on bounded sets of $L^2(\mathcal G)$ (since  each $f_n$ is Lipschitz  on $\mathbb R^+$). For $p_n(s)=\int_0^s f_n(s)ds$, define $P_n(w)=p_n(|w|^2)$, yielding  $P_n'(w)=|w| f_n(|w|^2)$  formally. For $G_n(U)= (P_n(u), P_n(v))$ with $U=(u,v)\in \mathcal E(\mathcal G)$,  we get $G_n'(U)=g_n(U)$ (where the derivative of $G_n$ is computed component-wise). Since $ \Delta $ with domain $D_0$ is  self-adjoint and  non-positive operator in $L^2(\mathcal G)$ (see Remark \ref{spectrum} below),   Theorem 3.3.1 implies  that for any $ U_0\in \mathcal E(\mathcal G)$ there exists a unique global solution $U_n $ of \eqref{reduced} such that $U_n\in C(\mathbb R,  \mathcal E(\mathcal G))\cap C^ 1(\mathbb R,  \mathcal E(\mathcal G)' ) $. Furthermore, for \eqref{reduced}, the conservation of  energy and charge hold, {\it i.e.}, for all $t\in [-T, T]$
\begin{equation}\label{Ener_n}
\begin{array}{ll}
&Q(U_n(t))= \|  U_n(t)\|^2_{L^2(\mathcal G)}= \|  U_0\|^2_{L^2(\mathcal G)},\quad  E_n(U_n(t))= E_n(U_0),\\
&E_n( U)=\|\nabla U\|^2_{L^2(\mathcal G)}- \int_{-L}^L P_n(|u|^2)dx - \int_{L}^{+\infty} P_n(|v|^2) dx,\qquad U=(u,v),
\end{array} 
 \end{equation}
 and so we also obtain that 
 \begin{equation}\label{uniforme}
  \|U_n\|_{L^\infty([-T, T],  \mathcal E(\mathcal G))}\leqq C,\quad\text{for all}\;\; n.
 \end{equation}
Note that the last statement is a consequence of $U_0\in \widetilde{W}\subset W(\mathcal G)$ by Lemma \ref{W} and from Lemmas 2.3.2, 2.3.3, and 2.3.4 in \cite{Caz80}. Now, for $\mathcal C_R=\{ (f, g)\in L^2 ((-L, L)\times (L, R)): f(-L)=f(L)=g(L)\}$ with $R>L$, we consider the Hilbert spaces $\mathcal E_R=\{ (p,q): (p,q)\in (H^1(-L, L)\times H^1(L, R))\cap \mathcal C_{R}\}$ and the dual space of $\mathcal E_R$, $\mathcal E_R'$. We note that as for any $a, b\in \mathbb R$, the embedding $H^1(a, b) \hookrightarrow C([a,b])$ is compact, we get initially that $\mathcal C_{R}$ is closed in $H^1(-L, L)\times H^1(L, R)$ and so  $\mathcal E_R$ is a Hilbert space. Further,  from the embedding relations  $\mathcal E_R \underset{compact} {\hookrightarrow} \mathcal C_R \hookrightarrow\mathcal E_R'$ follows by Aubin-Lions theorem (\cite{lions})  implies that
$$
\mathcal J=\{F: F\in L^2([-T, T], \mathcal E_R), F'=\frac{d}{dt}F \in L^2([-T, T], \mathcal E_R ' )\},
$$
is a Banach space compactly embedded in $L^2([-T, T], \mathcal C_R)$. By following a similar  analysis to Lemma 2.3.5 in \cite{Caz80} (see also  Lemma 9.3.6 in \cite{Caz03}), we get  that $U_{n_k}\to U$ in $L^\infty([0, T],  \mathcal E(\mathcal G))$ weak-$\star$. Therefore,  we get that $U$ is a solution of \eqref{nlslog} in the sense of distributions. Moreover, the conservation of charge $Q$ in \eqref{mass}  holds and so $U_{n_k}(t)\to U(t)$ strong  in  $ \mathcal C=\{ (f, g)\in L^2 ((-L, L)\times (L, +\infty)): f(-L)=f(L)=g(L)\}$. We have also that energy $E$ in \eqref{Ener}  is conserved via a standard monotonicity argument (see section 2.4 in \cite{Caz80}). Thus, the inclusion $U\in C(\mathbb R; \mathcal E(\mathcal (\mathcal G))$ follows from conservation laws. Lastly,  for $U_0=(g,h)$, the condition $xh\in L^2(L,+\infty)$ implies $xv\in L^2(L,+\infty)$ (for the solution $U=(u,v)$), repeating the argument of Lemma 7.6.2 in \cite{Caz98}. This finishes the proof.
\end{proof}

\begin{remark}\label{spectrum}
By considering $(-\Delta_Z, D_Z)$, with $-\Delta_Z=-\Delta$ and $D_Z$ in \eqref{bcond}, it is possible to see the following (see Angulo\&Plaza \cite{AP}, Noja\&Pelinovsky \cite{NP}): for every $Z \in \R$, the essential spectrum of the self-adjoint operator $-\Delta_Z$ is purely absolutely continuous and $\sigma_{ess}(-\Delta_Z) = \sigma_{ac}(-\Delta_Z) = [0,+\infty)$. If $Z > 0$, then $-\Delta_Z$ has exactly one negative eigenvalue, {\it i.e.}, its point spectrum is $\sigma_{pt}(-\Delta_Z) = \{ - \varrho_{_{Z}} ^2 \}$, where $\varrho_{_{Z}} > 0$ is the only positive root of the transcendental equation
\[
\varrho \left( 2 \tanh \Big(\varrho L\Big) + 1 \right) - Z = 0,
\]
and with the eigenfunction
\[
\boldsymbol{\Phi}_Z = \begin{pmatrix}
\cosh (x\varrho_{_{Z}}),\;\;x\in [-L,L]  \\
\\ e^{-x\varrho_{_{Z}}},\;\;\;\;\;\;\;x\in [L,+\infty)
\end{pmatrix}.
\]
If $Z \leqq 0$, then $-\Delta_Z$ has no point spectrum, $\sigma_{pt}(-\Delta_Z) = \varnothing$. Therefore, for $Z=0$ we have that $-\Delta_Z$ is a non-negative operator.
\end{remark}

\section{Linearization of NLS-log equation}

For fixed $c \in \mathbb R$, let $U(x,t)=e^{ict}\Theta_c(x)$ be a standing wave solution for (\ref{nlslog}) with $\Theta_c(x)=(\phi_c(x),\psi_c(x)) \in D_0$ being a positive single-lobe state. We consider the action functional
$S_c=E+(c+1)Q $, thus $\Theta_c$ is a critical point of $S_c$. Next, we determine the linear operator associated with the second variation of $S_c$ calculated at $\Theta_c$, which is crucial for applying the approach in \cite{GrilSha87}. To express $S''_c(\Theta_c)$, it is convenient to split $u,v \in \widetilde{W}$ into real and imaginary parts: $u=u_1 + i u_2$, $v= v_1 + iv_2$. Then, we get $S''_c(\Theta_c)(u,v)$ can be formally rewritten as 
\begin{equation}\label{segd}
\begin{aligned}
    S''_c(\Theta_c)(u,v) =B_1(u_1,v_1) + B_2(u_2,v_2),
\end{aligned}
\end{equation}
where

\begin{equation}\label{fbi}
\begin{aligned}
    B_1((f,g),(h,q))=& \int_{-L}^L f' h' dx + \int_L^{+\infty} g' q' dx +  \int_{-L}^L f h[c-2-\text{Log} |\phi_c|^2 ] dx    \\ &+ \int_L^{+\infty} g q[(x-L+a)^2 -3] dx, \\
    B_2((f,g),(h,q))=& \int_{-L}^L f' h' dx + \int_L^{+\infty} g' q' dx +  \int_{-L}^L f h[c-\text{Log} |\psi_c|^2 ] dx    + \int_L^{+\infty} g q[(x -L+a)^2 -1] dx,
\end{aligned}
\end{equation}
and dom$(B_i)=\widetilde{W} \times \widetilde{W}$, $i \in \{1,2\}$. Note that the forms $B_i$, $i \in \{1,2\}$, are bilinear bounded from below and closed. Therefore, by the First Representation Theorem (see \cite{K}, Chapter VI, Section 2.1),  they define operators $\widetilde{\mathcal{L}}_1$ and $\widetilde{\mathcal{L}}_2 $ such that for $i \in \{1,2\}$

\begin{equation}\label{ope}
\begin{aligned}
    &\text{dom}(\widetilde{\mathcal{L}}_i)= \{ v \in \widetilde{W}: \exists w \in L^2(\mathcal{G}) \text{  s.t.  } \forall z \in \widetilde{W}, B_i(v,z)=\langle w,z\rangle\}\\
    &\widetilde{\mathcal{L}}_i v= w.
\end{aligned}
\end{equation}

\begin{theorem} \label{Oper}
    The operators $\widetilde{\mathcal{L}}_1$ and $\widetilde{\mathcal{L}}_2 $ determined in (\ref{ope}) are given by 
    \begin{equation*}
    \begin{aligned}
        &\widetilde{\mathcal{L}}_1= \text{diag}\left( - \partial_x^2 + (c-2)- \text{Log}|\phi_c|^2, -\partial_x^2 +(x-L+a)^2 -3 \right) \\
        &\widetilde{\mathcal{L}}_2= \text{diag}\left( - \partial_x^2 + c-\text{Log}|\phi_c|^2, -\partial_x^2 +(x-L+a)^2 -1 \right)
    \end{aligned}
    \end{equation*}
on the domain $\mathcal D= \{ (f,g) \in D_0: x^2g \in L^2(L,+\infty) \}$. Thus, $\widetilde{\mathcal{L}}_i=\mathcal{L}_i$ defined in \eqref{L+}.
\end{theorem}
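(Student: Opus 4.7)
The strategy is to identify $\widetilde{\mathcal{L}}_i$ via the First Representation Theorem together with a direct integration-by-parts calculation, after one preliminary algebraic observation. Since $\psi_c(x)=e^{(c+1)/2}e^{-(x-L+a)^2/2}$, one has $\text{Log}|\psi_c|^2=(c+1)-(x-L+a)^2$, so $(c-2)-\text{Log}|\psi_c|^2=(x-L+a)^2-3$ and $c-\text{Log}|\psi_c|^2=(x-L+a)^2-1$. This already matches the half-line coefficients claimed for $\widetilde{\mathcal{L}}_1$ and $\widetilde{\mathcal{L}}_2$; what remains is the differential action on each edge and the domain identification $\text{dom}(\widetilde{\mathcal{L}}_i)=\mathcal{D}$.

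For the inclusion $\mathcal{D}\subset \text{dom}(\widetilde{\mathcal{L}}_i)$, take $v=(f,g)\in\mathcal{D}$ and $z=(h,q)\in\widetilde{W}$. Integration by parts on each edge yields boundary contributions $f'(L)h(L)-f'(-L)h(-L)$ on the ring and $-g'(L+)q(L)$ on the half-line; the vanishing at $+\infty$ is justified by $g,g'\to 0$, a consequence of $g\in H^2(L,+\infty)$ combined with $(x-L+a)^2 g\in L^2$. Applying the vertex continuity $h(L)=h(-L)=q(L)$ built into $\widetilde{W}$ together with the Kirchhoff condition in $D_0$ collapses all boundary terms, leaving $B_i(v,z)=\langle w,z\rangle$ with $w$ equal to the differential expression in the statement. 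That $w\in L^2(\mathcal{G})$ follows because $\text{Log}|\phi_c|^2$ is bounded on $[-L,L]$ (since $\phi_c$ is strictly positive and continuous on a compact interval) and $(x-L+a)^2 g\in L^2(L,+\infty)$ by the definition of $\mathcal{D}$.

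For the converse, let $v=(f,g)\in \text{dom}(\widetilde{\mathcal{L}}_i)$. Testing $B_i(v,z)=\langle w,z\rangle$ against $z$ with compact support in the interior of one edge at a time identifies $f$ and $g$ as distributional solutions of the corresponding second-order ODEs, and standard elliptic regularity upgrades this to $f\in H^2(-L,L)$ and $g\in H^2_{\mathrm{loc}}(L,+\infty)$. Rewriting the half-line equation as $(x-L+a)^2 g = g'' + 3g + w_2$ (and analogously for $\widetilde{\mathcal{L}}_2$) and running a cutoff-and-integration-by-parts argument with $\chi_R\in C_c^\infty([L,+\infty))$, one simultaneously bounds $\|g''\|_{L^2}$ and $\|(x-L+a)^2 g\|_{L^2}$ in terms of $\|g\|_{L^2}$ and $\|w_2\|_{L^2}$, hence $g\in H^2(L,+\infty)$ and $x^2 g\in L^2(L,+\infty)$. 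Finally, reinstating general $z\in\widetilde{W}$ with $q(L)\ne 0$ and comparing boundary terms forces the Kirchhoff matching $f'(L)-f'(-L)=g'(L+)$; combined with the continuity inherited from $\widetilde{W}$, this places $v\in D_0$ and therefore $v\in\mathcal{D}$.

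The main obstacle is this weighted regularity step, since the oscillator weight $(x-L+a)^2 g$ and the second derivative $g''$ enter the half-line equation on the same side with opposite signs, so neither bound can be read off in isolation. A cleaner alternative is to recognize the half-line form as that of a shifted quantum harmonic oscillator on $[L,+\infty)$ coupled at $L$ through the vertex condition, whose associated self-adjoint operator is known to have domain $\{g\in H^2(L,+\infty):x^2 g\in L^2\}$ supplemented by the relevant boundary condition at $L$; combining this with the bounded perturbation $-\partial_x^2+(c-2)-\text{Log}|\phi_c|^2$ on the ring and the vertex matching in $D_0$ delivers $\widetilde{\mathcal{L}}_i=\mathcal{L}_i$ on $\mathcal{D}$.
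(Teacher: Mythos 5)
Your proposal is correct, but it proves the hard direction by a genuinely different route than the paper. The paper never shows the inclusion $\operatorname{dom}(\widetilde{\mathcal{L}}_i)\subset\mathcal{D}$ directly: it splits the form as $B_1=B^0+B^1$ (kinetic part on $\mathcal{E}(\mathcal{G})$, potential part on $\widetilde{W}$), shows by the same integration-by-parts computation you use that $-\Delta$ with domain $D_0$ is \emph{contained} in the operator of $B^0$ and that the maximal multiplication operator $\mathcal{M}$ by $V_1$ is contained in the operator of $B^1$, and then concludes equality from the fact that a self-adjoint operator admits no proper symmetric extension. This avoids any weighted elliptic estimate, at the price of leaving implicit the recombination of $\mathcal{L}^0$ and $\mathcal{L}^1$ into the operator of the sum form on $\operatorname{dom}(\mathcal{L}^0)\cap\operatorname{dom}(\mathcal{L}^1)=\mathcal{D}$ --- which is exactly the ``separation'' content your converse argument confronts head-on. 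Your direct route (distributional identification, interior regularity, cutoff estimate giving $g''\in L^2$ and $x^2g\in L^2$ simultaneously, then recovery of the Kirchhoff condition by testing with $q(L)\neq 0$) is self-contained and makes the domain characterization explicit; the cutoff step is the classical separation estimate for the harmonic oscillator and is standard, though you should either carry it out (boundary terms at the regular endpoint $x=L$ are harmless since $g\in H^2_{\mathrm{loc}}$) or, if you take your suggested shortcut of quoting the known oscillator domain, say explicitly why the vertex coupling does not spoil it --- e.g.\ that the ring potential and the constant $-3$ are bounded and hence do not change operator domains, and that the separation estimate only involves the behavior at infinity. With that caveat addressed, both arguments deliver the theorem; the paper's is shorter, yours is more transparent about where $\mathcal{D}$ actually comes from.
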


\begin{proof}
Since the proof for $\widetilde{\mathcal{L}}_2$ is similar to the one for $\widetilde{\mathcal{L}}_1$, we deal with $\widetilde{\mathcal{L}}_1$. Let $B_1=B^0 + B^1$, where $B^0: \mathcal{E}(\mathcal{G}) \times \mathcal{E}(\mathcal{G}) \rightarrow \mathbb R $ and $B^1: \widetilde{W} \times \widetilde{W}  \rightarrow \mathbb R$ are defined by 

\begin{equation}
    B^0\left( (f,g), (h,q) \right)= \langle (f',g'),(h',q') \rangle, \ \ B^1\left((f,g), (h,q)\right)= \langle V_1(f,g),(h,q) \rangle,
\end{equation}
and $V_1(f,g)= ([c-2-\text{Log}|\phi_c(x)|^2]f, [(x-L+a)^2 - 3]g)$. We denote by $\mathcal{L}^0$ (resp. $\mathcal{L}^1$) the self-adjoint operator on $L^2(\mathcal{G})$ associated (by the First Representation Theorem) with $B^0$ (resp. $B^1$). Thus, 

\begin{equation*}
\begin{aligned}
    &\text{dom}(\mathcal{L}^0)= \{ v \in \mathcal{E}(\mathcal{G}): \exists w \in L^2(\mathcal{G}) \text{  s.t.  } \forall z \in \mathcal{E}(\mathcal{G}), B^0(v,z)=\langle w,z\rangle\}\\
    &\mathcal{L}^0v= w.
\end{aligned}
\end{equation*}
We claim that $\mathcal{L}^0$ is the self-adjoint operator 
\begin{equation*}
   \mathcal{L}^0= -\Delta = -\frac{d^2}{dx^2}, \ \ \text{dom}(-\Delta)=D_0. 
\end{equation*}
Indeed, let $v=(v_1,v_2) \in D_0$ and $w= -v'' \in L^2(\mathcal{G})$. Then for every $z=(z_1,z_2) \in \mathcal{E}(\mathcal{G})$ and using integration by parts, we have

\begin{equation*}
\begin{aligned}
    B^0(v,z) &= (v',z')=\int_{-L}^L v_1' z_1' dx + \int_L^{+\infty} v_2' z_2' dx \\ 
    &= v_1'(L)z_1(L) - v_1'(-L)z_1(-L) - v_2'(L)z_2(L) - \int_{-L}^L v_1'' z_1' dx - \int_L^{+\infty} v_2'' z_2' dx \\ 
    &= (-v'', z)= (w,z).
\end{aligned}
\end{equation*}
Thus, $v \in$ dom$(\mathcal{L}^0)$ and $\mathcal{L}^0v= w=-v''=-\Delta v$. Hence, $-\Delta \subset \mathcal{L}^0$. Since $-\Delta $ is self-adjoint on $D_0$, $\mathcal{L}^0= -\Delta$. 

Again, by the First Representation Theorem, 
\begin{equation*}
\begin{aligned}
    &\text{dom}(\mathcal{L}^1)= \{ v \in \widetilde{W}: \exists w \in L^2(\mathcal{G}) \text{  s.t.  } \forall z \in \widetilde{W}, B^1(v,z)=\langle w,z\rangle\}\\
    &\mathcal{L}^1 v= w.
\end{aligned}
\end{equation*}
Note that $\mathcal{L}^1$ is the self-adjoint extension of the following multiplication operator for $u=(f,g)$
\begin{equation*}
    \mathcal{M} u= V_1 (f,g), \ \text{dom}(\mathcal{M})= \{ u \in \mathcal{E}(\mathcal{G}): V_1 u \in L^2(\mathcal{G}) \}. 
\end{equation*}
Indeed, for $v \in$ dom$(\mathcal{M})$ we have $v \in \widetilde{W}$, and we define $w= V_1 v \in L^2(\mathcal{G})$. Then for every $z \in \widetilde{W}$ we get $B^1(v,z)=\langle w,z\rangle$. Thus, $v \in $ dom$(\mathcal{L}^1)$ and $\mathcal{L}^1 v=w = V_1 v$. Hence, $\mathcal{M}\subset \mathcal{L}^1 $. Since $\mathcal{M}$ is self-adjoint, $\mathcal{L}^1 = \mathcal{M}$. The proof of the Theorem is complete. 
\end{proof}


\section{Proof of theorems \ref{FrobeN} and \ref{L2}}

Let us consider one \textit{a priori} positive single-lobe state $(\phi_c, \psi_c)$ solution for (\ref{sistema}) with $c \in \mathbb R$. For convenience, we denote $\phi = \phi_c$ and $\psi= \psi_c$. Thus, the linearized operator $\mathcal{L}_1$ in Theorem \ref{Oper} becomes as 
\begin{equation}
    \mathcal{L}_1= \text{diag}(- \partial_x^2 + (c-2)- \text{Log}|\phi|^2, - \partial_x^2 + (c-2)- \text{Log}|\psi|^2)
\end{equation}
with domain $\mathcal D=\{ (f,g) \in D_0: x^2g \in L^2([L,+\infty)) \}$. 

Next, for $(f,g) \in \mathcal D$ define $h(x)= g(x+L)$ for $x>0$. Then $h(0)=g(L)$ and $h'(0)=g'(L)$. Therefore, the eigenvalue problem $\mathcal{L}_1(f,g)^t = \lambda (f,g)^t$ is equivalent to 

\begin{equation}\label{peigen}
    \left\{   \begin{array}{ll}
    \mathcal{L}_{0,1} f(x) = \lambda f(x), \ \ \ x \in (-L,L), \\
     \mathcal{L}_{1,1} h(x)= \lambda h(x), \ \ \ x \in (0,+ \infty), \\ 
     (f,h) \in D_+, 
  \end{array}  \right.
\end{equation}
where 

\begin{equation}\label{L0+}
    \mathcal{L}_{0,1} = - \partial_x^2 + (c-2)- \text{Log}|\phi|^2, \ \ \ \mathcal{L}_{1,1} \equiv - \partial_x^2 + (c-2)- \text{Log}|\psi_{0,a}|^2, 
\end{equation}
and $\psi_{0,a}(x)=  e^{\frac{c+1}{2}}e^{-\frac{(x+a)^2}{2}}$, with $x>0$, $a>0$. In this form, 
\begin{equation}
\mathcal{L}_{1,1}= - \partial_x^2 + (x+a)^2-3.
\end{equation}
The domain $D_+$ is given by 
\begin{equation}
    D_+=\{ (f,h) \in X^2(-L,L): f(L)=f(-L)=h(0), f'(L)-f'(-L)=h'(0), x^2h \in L^2(0,+\infty)\}, 
\end{equation}
with $X^n(-L,L)\equiv H^n(-L,L) \oplus H^n(0,+\infty)$, $n \in \mathbb N$. We note that $(\phi, \psi_{0,a}) \in D_+$. 

For  notational convenience, let $\psi_a = \psi_{0, a}$. Define $\mathcal{L_+}= \text{diag}(\mathcal{L}_{0,1}, \mathcal{L}_{1,1})$ with domain $D_+$.  The proof of the Theorem \ref{FrobeN} will follow from sections 4.1 and 4.2 below.

 \subsection{Perron-Frobenius property and Morse index for   $(\mathcal L_{+}, D_+)$}

Initially, we show  that $ n(\mathcal L_{+})\geqq 1$. Since $(\phi, \psi_a )\in  D_+$ and 

\begin{equation}\label{nega}
\begin{array}{ll}
\langle \mathcal L_{+} (\phi, \psi_a)^t, (\phi, \psi_a)^t\rangle &= -2 \left[ \int_{-L}^L \phi^{2}(x)dx + \int_{0}^{+\infty} \psi_a^{2}(x) dx \right] <0,
\end{array}
\end{equation}
the mini-max principle implies $n(\mathcal L_{+})\geqq 1$. Now, we note that is possible to show, via the extension theory for symmetric operators of Krein-von Neumann, that $n(\mathcal L_{+})\leqq 2$.

\begin{theorem}\label{index} The Morse index associated to $(\mathcal L_{+}, D_+)$ is one. Consequently, the Morse index for $(\mathcal  L_1, \mathcal D)$  is also one.
\end{theorem}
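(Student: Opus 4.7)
The plan is to sharpen the existing bounds $1\le n(\mathcal L_+)\le 2$---the lower bound coming from the quadratic-form computation \eqref{nega} and the upper bound from the Krein--von Neumann extension theory indicated immediately before the theorem---to the equality $n(\mathcal L_+)=1$, and then to transfer the conclusion to $(\mathcal L_1,\mathcal D)$ via the unitary equivalence induced by the translation $h(x)=g(x+L)$. To rule out $n(\mathcal L_+)=2$, I would invoke the splitting eigenvalue method of Lemma~\ref{split}, which reduces the coupled spectral problem for $\mathcal L_+=\mathrm{diag}(\mathcal L_{0,1},\mathcal L_{1,1})$ on $D_+$ to two classical one-dimensional problems on the individual edges: the periodic Sturm--Liouville problem for $\mathcal L_{0,1}$ on $[-L,L]$, and the Neumann Sturm--Liouville problem for $\mathcal L_{1,1}$ on $[0,+\infty)$.

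For the periodic piece, $\mathcal L_{0,1}\phi=-2\phi$ follows by inspection from the profile equation, and $\phi$ is then the unique periodic ground state by a Perron--Frobenius/Floquet argument, so $-2$ is the simple lowest periodic eigenvalue. Differentiating the profile equation gives $\mathcal L_{0,1}\phi'=0$ with $\phi'$ odd, vanishing only at $x=0$ on $[-L,L]$ and satisfying $\phi'(L)<0$. The theory of real coupled self-adjoint boundary conditions on $[-L,L]$, combined with a Sturm comparison applied to this sign pattern of $\phi'$, then forces the next periodic eigenvalue to be strictly positive, so the periodic Morse index equals one.

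For the Neumann piece, I would exploit the Hermite-type factorization $\mathcal L_{1,1}+2=A^*A$ with $A=\partial_y+(y+a)$; integration by parts under the Neumann condition $h'(0)=0$ yields
\begin{equation*}
\langle\mathcal L_{1,1}h,h\rangle=\|Ah\|^2-2\|h\|^2+a\,|h(0)|^2, \qquad h\in D(\mathcal L_{1,1}^{\mathrm{Neu}}),
\end{equation*}
where the positive boundary contribution uses the vertex identity $a=-2\phi'(L)/\phi(L)>0$ built into $(\phi,\psi_a)\in D_+$. A parabolic-cylinder (Weber function) analysis of the shifted harmonic oscillator on $[a,+\infty)$ with Neumann condition at $a$ then pins down the Neumann Morse index; combining both counts via Lemma~\ref{split} produces $n(\mathcal L_+)\le 1$, matching the lower bound and giving the asserted value $n(\mathcal L_+)=1$.

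The principal obstacle is the Neumann half-line analysis. The natural Gaussian candidate $\psi_a(y)=e^{(c+1)/2}e^{-(y+a)^2/2}$ satisfies $\mathcal L_{1,1}\psi_a=-2\psi_a$ pointwise but has $\psi_a'(0)=-a\psi_a(0)\ne 0$, so it does \emph{not} lie in the Neumann domain and $-2$ need not be a Neumann eigenvalue; the sharp count cannot be read off directly, and must be extracted quantitatively from the boundary term $a|h(0)|^2$ in the identity above together with the precise location of the Weber eigenvalues on $[a,+\infty)$. A secondary subtlety is verifying on the ring side that $0$ is not a periodic eigenvalue, which uses precisely that $\phi'(L)\ne 0$---a consequence of the single-lobe structure.
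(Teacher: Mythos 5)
Your edge-by-edge counting strategy has genuine gaps, and it also diverges from what actually closes the argument in the paper. First, your periodic count rests on a false premise: $\phi$ is \emph{not} a periodic eigenfunction of $\mathcal L_{0,1}$. It does satisfy $\phi(L)=\phi(-L)$, but the vertex condition for the single-lobe state gives $\phi'(L)-\phi'(-L)=\psi_a'(0)=-a\,\psi_a(0)\neq 0$, so $\phi$ does not belong to the periodic domain; hence $-2$ need not be a periodic eigenvalue at all, and the conclusion ``the periodic Morse index equals one'' is unsupported. Second, you concede that the Neumann half-line count is not actually carried out: the factorization identity $\langle\mathcal L_{1,1}h,h\rangle=\|Ah\|^2-2\|h\|^2+a|h(0)|^2$ only shows $\mathcal L_{1,1}\geq -2$ on the Neumann domain and says nothing about \emph{how many} negative eigenvalues there are. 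The paper obtains the needed bound indirectly: even/odd extension to the line, the unfolded operator $\widetilde{\mathcal L}=-\partial_x^2+(|x|+a)^2-3$ on the $\delta$-interaction domains $D_{\delta,\gamma}$, the nonnegativity of $\mathcal M_0$ via the factorization \eqref{M0} through $\psi'_{even}$, and Proposition \ref{semibounded} (deficiency indices equal to one), giving $n(\widetilde{\mathcal L})\leq 1$, combined with the Perron--Frobenius property of Theorem \ref{PFpro}.

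Third, ``combining both counts via Lemma \ref{split}'' is not a valid accounting even if the edge counts were correct. Lemma \ref{split} applies only to eigenfunctions with $g(L)\neq 0$; negative eigenvalues whose eigenfunctions vanish identically on the half-line, or vanish at the vertex, are invisible to both the periodic and the Neumann problems, and these are exactly the cases the paper must exclude by separate arguments (Sturm comparison with $\phi'$ when $g\equiv 0$; odd extension plus the Perron--Frobenius property of $(\widetilde{\mathcal L},D_{\delta,\gamma})$ when $g(0)=0$). Moreover, for split eigenfunctions the eigenvalue is \emph{simultaneously} a periodic and a Neumann eigenvalue, so the Morse index of $\mathcal L_+$ is not a sum of edge indices. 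The paper's actual mechanism is a contradiction argument: assuming a second negative eigenvalue $\lambda_1>\lambda_0$, the case analysis forces both $\zeta_{\lambda_0}$ and $g_1$ to be strictly positive Neumann eigenfunctions of $\mathcal L_{1,1}$ at distinct eigenvalues, contradicting their orthogonality; this hinges on the Perron--Frobenius Theorem \ref{Frobe} and Corollary \ref{simples}, which your proposal never invokes. To repair your plan you would need (i) a correct periodic bound not based on $\phi$, (ii) a genuine proof that the Neumann half-line problem has at most one negative eigenvalue, and (iii) a treatment of the $g\equiv 0$ and $g(0)=0$ cases.
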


The proof of Theorem \ref{index} is based on the following Perron-Frobenius property (PF property)  for $(\mathcal L_{+}, D_+)$. Our approach is grounded in EDO techniques (oscillation theorems) and the  extension theory for symmetric operators proposed in \cite{AC1, AC2} for  the NLS model \eqref{nlsp}. We note that significant adjustments to this approach are necessary  for the NLS-log model. The proof of Theorem \ref{index} is given at the end of this section.

\subsubsection{Perron-Frobenius property for   $(\mathcal L_{+}, D_+)$} 

We start  our analysis by defining the quadratic form $\mathcal Q$ associated to operator $\mathcal L_{+}$ on  $D_+$, namely, $\mathcal Q: D(\mathcal Q)\to \mathbb R$, with

\begin{equation}\label{Q}
\mathcal Q(\eta, \zeta) =\int_{-L}^L  (\eta')^2 + V_\phi \eta^2 dx + \int_{0}^{+\infty}  (\zeta')^2 + W_{\psi} \zeta^2  dx,
\end{equation}
$V_\phi=(c-2)-\text{Log}|\phi|^2 $, $W_\psi=(c-2)-\text{Log}|\psi_a|^2=(x+a)^2-3 $, and   $D(\mathcal Q)$ defined by 
 \begin{equation}\label{Qa}
   D(\mathcal Q)=\{(\eta,\zeta)\in X^1(-L,L): \eta(L)=\eta(-L)=\zeta(0), x \zeta \in L^2(0,+\infty)\}.
   \end{equation}

\begin{theorem}\label{Frobe} Let $\lambda_0<0$ be the smallest eigenvalue for $\mathcal L_{+}$ on  $D_+$ with associated eigenfunction $(\eta_{\lambda_0}, \zeta_{\lambda_0})$. Then, $\eta_{\lambda_0}$ and $\zeta_{\lambda_0}$ are positive functions. Moreover, $\eta_{\lambda_0}$ is even on $[-L,L]$.
\end{theorem}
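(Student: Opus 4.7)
The plan is to establish positivity and evenness via three ingredients: a Kato-inequality symmetrization to reduce to a nonnegative eigenfunction, an ODE-uniqueness argument (with careful handling of the Kirchhoff vertex) to upgrade nonnegativity to strict positivity, and a reflection-symmetry argument exploiting the evenness of $V_\phi$ that follows from $\phi$ being a positive single-lobe profile.

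First, since $\mathcal{L}_+$ is self-adjoint with real coefficients, any eigenfunction for the real eigenvalue $\lambda_0$ may be chosen real-valued. For real $f\in H^1$, $(|f|)'=\mathrm{sgn}(f)\,f'$ almost everywhere, so $\|(|\eta|',|\zeta|')\|_{L^2}=\|(\eta',\zeta')\|_{L^2}$; the potential part of $\mathcal{Q}$ and the $L^2$-norm only see $|\eta|^2,|\zeta|^2$; and taking absolute values preserves the vertex continuity $\eta(L)=\eta(-L)=\zeta(0)$. Hence $(|\eta_{\lambda_0}|,|\zeta_{\lambda_0}|)\in D(\mathcal{Q})$ realizes the same Rayleigh quotient $\lambda_0$, and by the variational characterization of $\lambda_0$ it is again an eigenfunction; I may therefore assume $\eta_{\lambda_0}\geq 0$ on $[-L,L]$ and $\zeta_{\lambda_0}\geq 0$ on $[0,+\infty)$.

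Next, I would upgrade nonnegativity to strict positivity using the linear ODE on each edge. If $\zeta_{\lambda_0}(x_0)=0$ at an interior point $x_0>0$, then $x_0$ is a local minimum of a nonnegative solution of $-\zeta''+W_\psi\zeta=\lambda_0\zeta$, so $\zeta'_{\lambda_0}(x_0)=0$ and Cauchy uniqueness forces $\zeta_{\lambda_0}\equiv 0$ on $[0,+\infty)$; then the vertex condition yields $\eta_{\lambda_0}(\pm L)=0$, the one-sided sign constraints $\eta'_{\lambda_0}(L)\leq 0\leq \eta'_{\lambda_0}(-L)$ combined with the Kirchhoff identity $\eta'_{\lambda_0}(L)-\eta'_{\lambda_0}(-L)=\zeta'_{\lambda_0}(0)=0$ pinch $\eta'_{\lambda_0}(\pm L)=0$, and ODE uniqueness on $[-L,L]$ forces $\eta_{\lambda_0}\equiv 0$, a contradiction. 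The boundary case $\zeta_{\lambda_0}(0)=0$ is handled analogously: $\zeta_{\lambda_0}\geq 0$ gives $\zeta'_{\lambda_0}(0)\geq 0$, the same Kirchhoff pinching yields $\zeta'_{\lambda_0}(0)=0$, and uniqueness kills $\zeta_{\lambda_0}$. Hence $\zeta_{\lambda_0}>0$ on $[0,+\infty)$, and then $\eta_{\lambda_0}(\pm L)=\zeta_{\lambda_0}(0)>0$ together with the analogous interior ODE argument gives $\eta_{\lambda_0}>0$ on $[-L,L]$.

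For evenness, the positive single-lobe assumption renders $\phi$ symmetric about $0$, so $V_\phi$ is even, and the reflection $\mathcal{R}\colon(\eta(x),\zeta(x))\mapsto(\eta(-x),\zeta(x))$ maps $D_+$ into itself (continuity is symmetric, and Kirchhoff is preserved since $(\eta(-\cdot))'(L)-(\eta(-\cdot))'(-L)=\eta'(L)-\eta'(-L)=\zeta'(0)$). Thus $\mathcal{R}$ commutes with $\mathcal{L}_+$, so $\mathcal{R}(\eta_{\lambda_0},\zeta_{\lambda_0})$ is another strictly positive eigenfunction at $\lambda_0$, and the symmetrization $\bigl(\tfrac{1}{2}(\eta_{\lambda_0}(x)+\eta_{\lambda_0}(-x)),\,\zeta_{\lambda_0}(x)\bigr)$ is a positive and even-in-the-first-component eigenfunction at $\lambda_0$, which I would take as the canonical $(\eta_{\lambda_0},\zeta_{\lambda_0})$. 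The main obstacle in this whole argument is the positivity step at the vertex: unlike an interior strong maximum principle, a hypothetical zero at the junction forces one to exploit three one-sided sign conditions together with the Kirchhoff identity simultaneously to pinch all one-sided derivatives to zero before ODE uniqueness becomes available, which is the precise place where the graph topology enters nontrivially.
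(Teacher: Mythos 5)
Your proposal is correct in substance, but it reaches Theorem \ref{Frobe} by a genuinely different and more elementary route than the paper. The paper rules out $\zeta_{\lambda_0}\equiv 0$ and $\zeta_{\lambda_0}(0)=0$ via Floquet/Sturm oscillation theory and by unfolding $\mathcal L_{1,1}$ to the line operator \eqref{Leven} on the $\delta$-interaction domains \eqref{Ddelta}, using Krein--von Neumann extension theory (deficiency indices, Morse-index bounds) together with a Perron--Frobenius property on the line; positivity and evenness of $\eta_{\lambda_0}$ are then extracted from the real coupled problem $(RC_{\alpha})$ in \eqref{RC}, a quadratic-form comparison identifying $\lambda_0$ with the lowest $(RC_{\alpha_0})$-eigenvalue, and an oscillation theorem for coupled self-adjoint boundary conditions. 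You instead use the modulus trick at the bottom of the spectrum (legitimate here since the potential $(x+a)^2-3$ on the half-line edge forces $\lambda_0=\inf\sigma(\mathcal L_+)$ to be attained, so Rayleigh-quotient minimizers are eigenfunctions in $D_+$), upgrade nonnegativity to strict positivity by Cauchy uniqueness for the edge ODEs, with the junction handled correctly by pinching the one-sided sign conditions against the Kirchhoff identity, and obtain evenness from the reflection symmetry of $V_\phi$ (which indeed holds: $\phi'(0)=0$ at the interior maximum and uniqueness for the autonomous equation give $\phi$ even, exactly as the paper uses elsewhere). Your route avoids the extension-theory and $(RC_\alpha)$ machinery altogether; the paper's heavier route is, however, the one that is reused verbatim in the Morse-index and kernel arguments (Theorems \ref{index} and \ref{ker}) and that extends to the case $Z\neq 0$.

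One caveat: as written you prove that an eigenfunction at $\lambda_0$ \emph{can be chosen} positive with even $\eta$-component (you replace the given eigenfunction by its modulus and then by its reflection symmetrization), whereas the paper's proof shows that the given eigenfunction itself, after the sign normalization $\zeta_{\lambda_0}(0)>0$, is positive with $\eta_{\lambda_0}$ even; this stronger form is what Corollary \ref{simples} invokes when it asserts that any eigenfunction orthogonal to $(\eta_{\lambda_0},\zeta_{\lambda_0})$ has positive components. Your scheme closes this with one standard extra step: for any real eigenfunction $(\eta,\zeta)$ at $\lambda_0$, the pairs $\bigl(|\eta|\pm\eta,\,|\zeta|\pm\zeta\bigr)$ are nonnegative eigenfunctions or zero, and applying your strict-positivity upgrade to whichever is nontrivial forces $(\eta,\zeta)$ to have a definite sign; simplicity then follows since two positive functions cannot be orthogonal, and evenness of every eigenfunction follows from simplicity together with the reflection symmetry.
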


\begin{proof} The strategy of the proof is based n tools used in \cite{AC1, AC2} for  the case of the NLS model in \eqref{nlsp}.  For the NLS-log model, significant changes  are required, and for the reader's convenience, we   highlight in the differences in the approach. We split the proof into several steps.

\begin{enumerate}
\item[1)] The profile $\zeta_{\lambda_0}$ is not identically zero: Indeed, suppose $\zeta_{\lambda_0}\equiv 0$, then $\eta_{\lambda_0}$ satisfies

\begin{equation}\label{trivial}
 \left\{ \begin{array}{ll}
\mathcal L_{0,1}\eta_{\lambda_0}(x)=\lambda_0 \eta_{\lambda_0} (x),\quad\quad x\in (-L,L),\\
 \eta_{\lambda_0}(L)=\eta_{\lambda_0}(-L)=0\\
 \eta'_{\lambda_0}(L)=\eta'_{\lambda_0}(-L).
  \end{array} \right.
 \end{equation} 
From the Dirichlet condition and oscillation theorems of the Floquet theory, $\eta_{\lambda_0}$ must be odd. By Sturm-Liouville theory, there is an eigenvalue $\theta$ for $\mathcal L_{0,1}$ such that $\theta<\lambda_0$, with an associated eigenfunction $ \xi>0$ on $(-L, L)$, and  $\xi(-L)=\xi(L)=0$. 
 

Let $\mathcal Q_{Dir}$ be the quadratic form associated to $\mathcal L_{0,1}$ with Dirichlet domain, {\it i.e.}, $\mathcal Q_{Dir}: H^1_0 (-L, L)\to \mathbb R$ defined by

\begin{equation}\label{Q_d}
\mathcal Q_{Dir}(f)=\int_{-L}^L (f')^2 + V_\phi f^2 dx.
 \end{equation} 
 
Then, $\mathcal Q_{Dir}(\xi)=\mathcal Q(\xi, 0)\geqq \lambda_0\|\xi\|^2$ and so, $\theta\geqq \lambda_0$. This is a contradiction.

\item[2)]  $\zeta_{\lambda_0}(0)\neq 0$:  suppose  $\zeta_{\lambda_0}(0)=0$ and we consider the odd-extension $\zeta_{odd}$ for $\zeta_{\lambda_0}$, and the even-extension $\psi_{even}$ of the tail-profile $\psi_a$ on all the line. Then, $\zeta_{odd}\in H^2(\mathbb R)$ and $\psi_{even}\in H^2(\mathbb R-\{0\})\cap H^1(\mathbb R)$. Next, we consider the following unfold operator  $\widetilde{\mathcal L}$ associated to $\mathcal L_{1,1}$,

\begin{equation}\label{Leven}
\widetilde{\mathcal L}=- \partial_x^2 + (c-2)- \text{Log}|\psi_{even}|^2=- \partial_x^2 +(|x|+a)^2-3,
 \end{equation} 
on the $\delta$-type interaction domain 
\begin{equation}\label{Ddelta}
   D_{\delta, \gamma}=\{f\in H^2(\mathbb R-\{0\})\cap H^1(\mathbb R): x^2f\in L^2(\mathbb R),  f'(0+)-f'(0-)=\gamma f(0)\}
   \end{equation}
for any $\gamma \in \mathbb R$. By the extension theory for symmetric operators, the family $(\widetilde{\mathcal L}, D_{\delta, \gamma})_{\gamma \in \mathbb R}$ represents all the self-adjoint extensions of the following symmetric operator $(\mathcal M_0, D(\mathcal M_0))$ defined by
$$
\mathcal M_0= \widetilde{\mathcal L},\quad D(\mathcal M_0)=\{f\in H^2(\mathbb R): x^2f\in L^2(\mathbb R), f(0)=0 \},
$$
 with deficiency indices  given by $n_{\pm}(\mathcal M_0)=1$. For clarity, we summarize these results (see  \cite{Ang2}). Consider  the Hilbert spaces scale associated with the self-adjoint operator
 $$
 \mathcal L=-\partial _x^2 +(|x|+a)^2,\quad D(\mathcal L)=\{f\in H^2(\mathbb R): x^2f\in  L^2(\mathbb R)\},
 $$
 {\it i.e}, $\mathcal H_s( \mathcal L)=\{f\in L^2(\mathbb R): \|f\|_{s,2}=\|(\mathcal L +I)^{s/2}f\|<\infty\}$, $s\geqq 0$, with 
 $$
 ...\subset\mathcal H_2( \mathcal L)\subset \mathcal H_1( \mathcal L)\subset L^2(\mathbb R)\subset\mathcal H_1( \mathcal L)\subset\mathcal H_{-2}( \mathcal L)\subset... \;.
 $$ 
 The dual space of $\mathcal H_s( \mathcal L)$ will be denoted by $\mathcal H_{-s}( \mathcal L)= \mathcal H_s( \mathcal L)'$. In this form, we get that the $\delta$-functional $\delta: \mathcal H_1( \mathcal L)\to \mathbb C$ defined by $\delta(\psi)=\psi(0)$ belongs to $\mathcal H_1( \mathcal L)'=\mathcal H_{-1}( \mathcal L)$  (by Sobolev embedding) and so $\delta\in \mathcal H_{-2}( \mathcal L)$. By Lemma 1.2.3 in \cite{AlKu},  the restriction $\mathcal M_0=\mathcal L-3$  to $D(\mathcal M_0)$ is a densely defined symmetric operator with deficiency indices $n_{\pm}(\mathcal M_0)=1$. Then, by Proposition \ref{11}  (Appendix) and Remark 4.2- item (iii) in \cite{Ang2}, we can characterized  all the self-adjoint extensions of  $(\mathcal M_0, D(\mathcal M_0))$ as $(\widetilde{\mathcal L}, D_{\delta, \gamma})_{\gamma \in \mathbb R}$.
 
  Now, the even tail-profile $\psi_{even}$  satisfies $\psi_{even}'(x)\neq 0$ for all $x\neq 0$, and so from the well-defined relation
\begin{equation}\label{M0}
\mathcal M_0 f=-\frac{1}{\psi'_{even}}\frac{d}{dx}\Big[(\psi'_{even})^2 \frac{d}{dx}\Big ( \frac{f}{\psi'_{even}}\Big)\Big],\quad x\neq 0
\end{equation}
we can see easily that $\langle \mathcal M_0 f, f\rangle\geqq 0$ for all $f\in D(\mathcal M_0)$. By extension theory  (Proposition \ref{semibounded}  in Appendix) we obtain that the Morse index for the family $(\widetilde{\mathcal L}, D_{\delta, \gamma})$ satisfies $n(\widetilde{\mathcal L})\leqq 1$, for all $\gamma \in \mathbb R$. Since $\zeta_{odd}\in D_{\delta, \gamma}$ (for any $\gamma$) and $\widetilde{\mathcal L}\zeta_{odd}=\lambda_0 \zeta_{odd}$ on $\mathbb R$, we have $n(\widetilde{\mathcal L})= 1$. Then,  $\lambda_0$ is the smallest negative eigenvalue for $\widetilde{\mathcal L}$ on $\delta$-interactions domains in \eqref{Ddelta}, and by Theorem \ref{PFpro}  in  Appendix (the Perron Frobenius property for $\widetilde{\mathcal L}$  with $\delta$-interactions domains on the line), $\zeta_{odd}$ must be positive, which is a contradiction. Therefore, $\zeta_{\lambda_0}(0)\neq 0$.

\item[3)] $\zeta_{\lambda_0}:[0, +\infty)\to \mathbb R$ can be chosen strictly positive: Without loss of generality, assume $\zeta_{\lambda_0}(0)>0$. The condition $\eta'_{\lambda_0}(L)-\eta'_{\lambda_0}(-L)=\zeta'_{\lambda_0}(0)$ implies
$$
\zeta'_{\lambda_0}(0)=\Big[ \frac{\eta'_{\lambda_0}(L)-\eta'_{\lambda_0}(-L)}{\zeta_{\lambda_0}(0)}\Big ]\zeta_{\lambda_0}(0)\equiv \gamma_0 \zeta_{\lambda_0}(0).
$$
Let $\zeta_{even}$ denote  even extension of $\zeta_{\lambda_0}$ to the entire  line. Then, $\zeta_{even}\in D_{\delta, 2\gamma_0}$ and $\widetilde{\mathcal L} \zeta_{even}=\lambda_0 \zeta_{even}$. A similar analysis as in item 2) above suggests that $\lambda_0$ is the smallest eigenvalue for $(\widetilde{\mathcal L}, D_{\delta, 2\gamma_0})$. Hence $\zeta_{even}$ is strictly positive on $\mathbb R$. Therefore, $\zeta_{\lambda_0}(x)>0$ for all $x\geqq 0$.

\item[4)] $\eta_{\lambda_0}:[-L,L]\to \mathbb R$ can be chosen strictly positive:  initially, we have that $\eta_{\lambda_0}$ satisfies the following boundary condition,

$$
\eta'_{\lambda_0}(L)-\eta'_{\lambda_0}(-L)=\Big[\frac{\zeta'_{\lambda_0}(0)}{\zeta_{\lambda_0}(0)}\Big] \zeta_{\lambda_0}(0)\equiv \alpha_{0} \zeta_{\lambda_0}(0)=\alpha_{0} \eta_{\lambda_0}(L).
$$

Consider the eigenvalue problem for $\mathcal L_{0,1}$ in \eqref{L0+} with  real coupled self-adjoint boundary condition determined by  $\alpha\in \mathbb R$:

\begin{equation}\label{RC}
(RC_{\alpha}):\;\; \left\{ \begin{array}{ll}
\mathcal L_{0,1}y(x)=\beta y(x),\quad\quad x\in (-L,L),\\
 y(L)=y(-L),\\
y'(L)-y'(-L)=\alpha y(L).
  \end{array} \right.
 \end{equation} 
By Theorem 1.35 in Kong\&Wu\&Zettl \cite{KWZ} or Theorem 4.8.1 in Zettl \cite{Z}, the first eigenvalue $\beta_0$ for \eqref{RC} with $\alpha=\alpha_0$ is simple. Since  $( \eta_{\lambda_0}, \lambda_0)$ solves  \eqref{RC},  $\lambda_0\geqq \beta_0$. {\it In the following we show} $\lambda_0= \beta_0$. Indeed,
we consider the quadratic form associated to the $(RC_{\alpha_0})$-problem  in \eqref{RC}, $\mathcal Q_{RC}$, where for $h\in H^1(-L,L)$ with $h(L)=h(-L)$,

\begin{equation}\label{QRC}
\mathcal Q_{RC}(h)=\int_{-L}^L (h')^2 + V_\phi h^2 dx-\alpha_0 |h(L)|^2.
 \end{equation} 
Let $\xi=\nu \zeta_{\lambda_0}$ with $\nu\in \mathbb R $  chosen such that $\xi(0)=\nu \zeta_{\lambda_0}(0)=h(L)$. Then, $(h,\xi)\in D(\mathcal Q)$ in \eqref{Qa}. Using $\mathcal L_{1,1} \zeta_{\lambda_0}=\lambda_0 \zeta_{\lambda_0}$, we obtain,

\begin{equation}\label{QQ}
\begin{aligned}
\mathcal Q_{RC}(h)&=\mathcal Q (h,\xi)- \alpha_0 h^2(L)- \int_{0}^{+\infty}(\xi')^2+ W_{\psi} \xi^2dx \\
&= \mathcal Q (h,\xi)- \alpha_0 h^2(L) + \nu^2 \zeta'_{\lambda_0}(0)\zeta_{\lambda_0}(0)- \lambda_0 \nu^2 \|\zeta_{\lambda_0}\|^2 \\
&=\mathcal Q (h,\xi)-\xi'_{\lambda_0}(0)h(L)+h(L)\xi'_{\lambda_0}(0)-\lambda_0  \|\xi\|^2 \\
&=\mathcal Q (h,\xi)-\lambda_0  \|\xi\|^2\geqq \lambda_0 [\|h\|^2+ \|\xi\|^2]-\lambda_0  \|\xi\|^2=\lambda_0  \|h\|^2.
\end{aligned} 
 \end{equation} 
Thus, $\beta_0\geqq \lambda_0 $ and so $\beta_0= \lambda_0 $. 

By the analysis above, we get that $ \lambda_0 $ {\it  is the first eigenvalue for the problem $(RC_{\alpha_0})$}  in \eqref{RC} and thus it is simple. Then, $\eta_{\lambda_0}$ is either odd or even. If $\eta_{\lambda_0}$ is odd,   the condition $\eta_{\lambda_0}(L)=\eta_{\lambda_0}(-L)$ implies $\eta_{\lambda_0}(L)=0$. However, $\eta_{\lambda_0}(L)= \zeta_{\lambda_0}(0)>0$. So, we must have that $\eta_{\lambda_0}$ is  even. Now, from  Oscillation Theorem for the  $(RC_{\alpha_0})$-problem, the number of zeros of  $\eta_{\lambda_0}$ on $[-L, L)$ is 0 or 1 (see Theorem 4.8.5 in \cite{Z}). Since $\eta_{\lambda_0}(-L)>0$ and $\eta_{\lambda_0}$ is even, we  necessarily conclude that $\eta_{\lambda_0}>0$ on $[-L,L]$. This completes the proof.
\end{enumerate}
\end{proof} 

\begin{corollary}\label{simples} Let $\lambda_0<0$ be the smallest  eigenvalue for $(\mathcal L_{+}, D_+)$. Then, $\lambda_0$ is simple.
\end{corollary}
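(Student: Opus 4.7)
The plan is to show that if $(\eta_1,\zeta_1)$ and $(\eta_2,\zeta_2)\in D_+$ are both eigenfunctions of $\mathcal L_+$ at $\lambda_0$, they must be linearly dependent. The strategy is to form an explicit linear combination whose second component vanishes at $x=0$ and then invoke, in sequence, the two obstructions already deployed in steps~1 and~2 of the proof of Theorem~\ref{Frobe}.

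A preliminary step is to record that the obstructions of steps~1 and~2 of Theorem~\ref{Frobe} apply to \emph{any} eigenfunction at $\lambda_0$, not just to the specific one treated there. Namely, for any eigenfunction $(\eta,\zeta)\in D_+$ at $\lambda_0$: step~1 (Dirichlet Sturm-Liouville comparison) forces $\zeta\not\equiv 0$, and step~2 ($\delta$-interaction odd-extension, together with the Krein--von Neumann Morse-index bound $n(\widetilde{\mathcal L})\leqq 1$ and the Perron-Frobenius property in Theorem~\ref{PFpro}) forces $\zeta(0)\neq 0$. After rescaling we may thus assume $\zeta_i(0)>0$ for $i=1,2$.

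Next, form the combination
\[
(W,V):=\zeta_2(0)\,(\eta_1,\zeta_1)-\zeta_1(0)\,(\eta_2,\zeta_2).
\]
Using $\eta_i(\pm L)=\zeta_i(0)$ and $\eta_i'(L)-\eta_i'(-L)=\zeta_i'(0)$, I would verify that $(W,V)\in D_+$ and $\mathcal L_+(W,V)^t=\lambda_0\,(W,V)^t$, with, crucially, $V(0)=0$ and $W(\pm L)=0$ by construction. Applying the step~2 obstruction to this new eigenfunction yields $V\equiv 0$, which in turn gives $V'(0)=0$ and hence $W'(L)=W'(-L)$. Then $(W,0)$ is an eigenfunction of $(\mathcal L_+,D_+)$ at $\lambda_0$ whose first component satisfies Dirichlet conditions on $[-L,L]$ with matching first derivatives; applying the step~1 obstruction to $(W,0)$ forces $W\equiv 0$. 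Therefore $(\eta_1,\zeta_1)$ and $(\eta_2,\zeta_2)$ are proportional, proving simplicity.

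The main obstacle is the conceptual observation that steps~1 and~2 of Theorem~\ref{Frobe} are statements about the eigenspace at $\lambda_0$ rather than about one fixed eigenfunction; once this is granted, the corollary reduces to a single algebraic combination followed by two successive applications of those obstructions. An alternative route, avoiding the second obstruction, would exploit ODE uniqueness on $[0,+\infty)$: the space of solutions of $\mathcal L_{1,1}\zeta=\lambda_0\zeta$ with $x^2\zeta\in L^2(0,+\infty)$ is one-dimensional (the linearly independent solution grows like $e^{x^2/2}$), giving $\zeta_2=\mu\,\zeta_1$ directly, after which step~1 applied to $(\eta_2-\mu\,\eta_1,0)$ closes the argument.
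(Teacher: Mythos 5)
Your proof is correct, but it runs on different machinery than the paper's. The paper settles the corollary with the classical Perron--Frobenius two-liner: if $\lambda_0$ were degenerate, one could choose a second eigenfunction $(f_0,g_0)$ orthogonal to $(\eta_{\lambda_0},\zeta_{\lambda_0})$; since the proof of Theorem \ref{Frobe} never uses simplicity, it applies to $(f_0,g_0)$ as well, making both components positive (up to a sign), which is incompatible with orthogonality. You never invoke the positivity conclusion or orthogonality at all; instead you isolate the two intermediate obstructions inside the proof of Theorem \ref{Frobe} (the half-line component of an eigenfunction at $\lambda_0$ can neither vanish identically nor vanish at the vertex), observe correctly that they hold for \emph{every} eigenfunction at $\lambda_0$ because they only use that $\lambda_0$ is the bottom of the spectrum and $\lambda_0<0$, and then annihilate the combination $\zeta_2(0)(\eta_1,\zeta_1)-\zeta_1(0)(\eta_2,\zeta_2)\in D_+$ in two steps: its half-line component vanishes at $0$, so the step~2 obstruction forces it to vanish identically; the remaining $(W,0)$ would then be an eigenfunction with trivial half-line component, which the step~1 obstruction excludes, so $W\equiv 0$ (your phrasing should be read in this contrapositive way, since $(W,0)$ is an eigenfunction only if $W\not\equiv 0$ --- a cosmetic point, not a gap). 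What the paper's route buys is brevity, at the price of tacitly reading Theorem \ref{Frobe} as a statement about the whole eigenspace; what yours buys is that it makes this point explicit and pinpoints exactly which pieces of the Frobe argument are used. Your alternative shortcut is also sound and arguably the cleanest: since the potential $(x+a)^2-3$ is limit point at infinity, the space of solutions of $\mathcal L_{1,1}\zeta=\lambda_0\zeta$ belonging to $L^2(0,+\infty)$ is one-dimensional, giving $\zeta_2=\mu\zeta_1$ directly, after which the step~1 obstruction applied to $(\eta_2-\mu\eta_1,0)$ closes the proof.
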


\begin{proof}
The proof is immediate. Suppose $\lambda_0$ is a double eigenvalue. Then, there exists an eigenfunction   $(f_0, g_0)$ associated with $\lambda_0$ orthogonal to $(\eta_{\lambda_0},\zeta_{\lambda_0})$. By Theorem \ref{Frobe}, $f_0, g_0>0$. This contradicts the orthogonality of  eigenfunctions.
\end{proof}


 \subsubsection{Splitting eigenvalue method on tadpole graphs}

In the following, we establish our main strategy for studying eigenvalue problems on a tadpole graph $\mathcal G$ as deduced in Angulo \cite{AC1}. More exactly, we reduce the eigenvalue problem for $\mathcal L_{1}\equiv \text{diag}(\mathcal L_1^0, \mathcal L_1^1)$ in \eqref{L+}  to two classes of eigenvalue problems: one for $\mathcal L_1^0= - \partial_x^2 + (c-2)- \text{Log}|\phi|^2$ with periodic boundary conditions on $[-L,L]$ and the other for the operator $\mathcal L_1^1=- \partial_x^2 + (c-2)- \text{Log}|\psi|^2$ with  Neumann-type boundary conditions on $[L,+\infty)$. 

\begin{lemma}\label{split} Let us consider $(\mathcal L_{1}, \mathcal D)$  in Theorem \ref{FrobeN}. Suppose $(f,g)\in \mathcal D$ with  $g(L)\neq 0$ and  $\mathcal L_{1} (f,g)^t=\gamma (f,g)^t$, for $\gamma \leq 0$. Then, we obtain the following two eigenvalue problems: 

  \begin{equation*}\label{La}
 \left\{ \begin{array}{ll}
\mathcal L_1^0f(x)=\gamma f(x),\; x\in (-L,L),\\
 f(L)=(-L),\;\; f'(L)=f'(-L),
  \end{array} \right. \; \qquad
 \left\{ \begin{array}{ll}
\mathcal L_1^1g(x)=\gamma g(x),\; x>L,\\
g'(L+)=0.
  \end{array} \right.
 \end{equation*} 
\end{lemma}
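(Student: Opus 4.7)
The plan is to read the block-diagonal eigenvalue equation $\mathcal{L}_1(f,g)^t = \gamma(f,g)^t$ componentwise and then exploit the evenness of the ring potential $V_\phi = (c-2) - \text{Log}|\phi|^2$ inherited from the single-lobe symmetry of $\phi$. First, componentwise reading immediately gives the two scalar ODEs $\mathcal{L}_1^0 f = \gamma f$ on $(-L,L)$ and $\mathcal{L}_1^1 g = \gamma g$ on $(L,+\infty)$, together with the Kirchhoff-Neumann vertex relations $f(L)=f(-L)=g(L)$, $f'(L)-f'(-L)=g'(L+)$, and the decay $x^2 g \in L^2(L,+\infty)$ that come from the definition of $\mathcal{D}$.

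Next, I would decompose $f = f_e + f_o$ into even and odd parts on $[-L,L]$; by evenness of $V_\phi$ each piece solves the same scalar ODE with the same eigenvalue $\gamma$. The continuity condition $f(L) = f(-L)$ combined with oddness of $f_o$ forces $f_o(\pm L) = 0$, while $f_o(0) = 0$ holds automatically. Hence $f_o|_{[0,L]}$ is a Dirichlet eigenfunction of $\mathcal{L}_1^0$ on $[0,L]$ at eigenvalue $\gamma \le 0$. To exclude this, I would apply the Sturm Comparison Theorem with $\phi'$ as the reference solution: differentiating the profile equation yields $\mathcal{L}_1^0 \phi' = 0$ with $\phi'(0) = 0$ and $\phi'(L) < 0$ by strict monotonicity of the single-lobe, so extending the autonomous ODE past $L$ places the next zero of $\phi'$ at some $L^* > L$. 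For $\gamma \le 0$, any nonzero solution of $\mathcal{L}_1^0 u = \gamma u$ vanishing at $0$ oscillates no faster than $\phi'$ and hence cannot vanish again on $(0,L^*)$, in particular not at $L$. Thus $f_o \equiv 0$, so $f = f_e$ is even, whence $f_e'(-L) = -f_e'(L)$ and the Kirchhoff derivative jump reduces to $g'(L+) = 2 f_e'(L)$.

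The final step is to establish $f_e'(L) = 0$, which is simultaneously equivalent to the periodic condition $f'(L) = f'(-L)$ on the ring and to the Neumann condition $g'(L+) = 0$ on the tail. Because $\mathcal{L}_1^1 = -\partial_x^2 + (x-L+a)^2 - 3$ has coercive potential going to $+\infty$, the space of solutions of $\mathcal{L}_1^1 g = \gamma g$ satisfying $g, x^2 g \in L^2(L,+\infty)$ is one-dimensional, so the boundary ratio $m(\gamma) := g'(L+)/g(L)$ is uniquely determined by $\gamma$. Matching $f_e(L) = g(L)$ with $2 f_e'(L) = g'(L+)$ then translates the joint eigenvalue condition into a single scalar identity relating $f_e'(L)/f_e(L)$ to the Weyl $m$-function of the tail. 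The main obstacle is precisely this compatibility step: the preceding arguments are purely symmetry-and-Sturm reductions, whereas the vanishing of the vertex derivative is a genuine constraint that must be extracted from the explicit shifted harmonic-oscillator structure of the tail together with the quadratic decay weight. Carrying it out carefully is the true crux of the splitting-eigenvalue method on the tadpole, beyond any purely formal algebraic decomposition of the domain.
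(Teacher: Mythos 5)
Your proposal does not actually prove the lemma: everything it establishes before the last step is either the trivial componentwise reading of $\mathcal L_1(f,g)^t=\gamma(f,g)^t$ or the evenness of $f$, and the genuinely nontrivial assertion of the lemma --- that $g'(L+)=0$, equivalently $f'(L)=f'(-L)$ --- is left unproven. You correctly reduce it (using evenness) to showing $f'(L)=0$, reformulate it as a compatibility identity between $f'(L)/f(L)$ and the Weyl ratio $m(\gamma)=g'(L+)/g(L)$ of the shifted harmonic-oscillator tail, and then explicitly declare this ``the true crux'' to be ``carried out carefully'' --- without carrying it out. But that compatibility is exactly the content of the lemma: a priori there is no formal reason why the matched ring/half-line data should force $m(\gamma)=0$, and no sign, monotonicity, or oscillation argument is offered to exclude $m(\gamma)\neq 0$. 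The paper handles precisely this point by observing that, since $g(L)\neq 0$, the ring component $f$ solves the real coupled self-adjoint problem $(RC_\alpha)$ in \eqref{RC} with $\alpha=\theta:=g'(L+)/g(L)$ and eigenvalue $\gamma$, and then invoking the argument of Lemma 3.4 in \cite{AC1} (properties of the family $(RC_\alpha)$ together with oscillation theory) to conclude $\theta=0$. Your write-up supplies neither this argument nor any substitute for it, so the conclusion of the lemma is not reached.

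The parts you do prove are fine but peripheral. The even/odd splitting with the Sturm comparison against $\phi'$ (which solves $\mathcal L_1^0\phi'=0$, is odd, and has no zero on $(0,L]$ since $\phi'(L)=\tfrac12\psi'(L)\neq 0$) correctly kills the odd part $f_o$ for $\gamma<0$; for $\gamma=0$ you should add the Sturm separation / linear dependence remark, since $f_o=c\,\phi'$ is excluded only because $f_o(L)=0\neq c\,\phi'(L)$ unless $c=0$. This evenness step is consistent with, but not required by, the paper's route, which works directly with the $(RC_\theta)$ formulation; in the paper the evenness of the relevant eigenfunctions is obtained afterwards from the simplicity of the first $(RC_{\alpha_0})$ eigenvalue. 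In short: correct setup, correct identification of where the difficulty lies, but the decisive step $\theta=0$ is missing, so the proof is incomplete.
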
 

\begin{proof} For $(f, g)\in \mathcal D$ and $g(L)\neq 0$, we have $$f(-L)=f(L),\;\;f'(L)-f'(-L)=\left[\frac{g'(L+)}{g(L)} \right] f(L)\equiv \theta f(L),$$
and so $f$ satisfies the real coupled problem $(RC_\alpha)$ in \eqref{RC} with $\alpha=\theta$ and $\beta=\gamma$. Then, from the proof of Lemma 3.4 in \cite{AC1} we obtain that $\theta=0$, and thus we prove the Lemma. 
\end{proof}

\begin{remark} From Lemma \ref{split}, it follows that $\eta_{\lambda_0}$ in Theorem \ref{Frobe} remains an even-periodic function on $[-L, L]$ and $\zeta_{\lambda_0}$ satisfies a Neumann boundary condition on $[L, +\infty)$.
\end{remark}

 \subsubsection{Morse index for $(\mathcal L_{+}, D_+)$}

In the following, we provide the proof of Theorem \ref{index}.

\begin{proof} We consider $\mathcal L_{+}$ on $D_+$ and suppose $n(\mathcal L_{+})=2$ without loss of generality  (note that via extension theory we can show $n(\mathcal L_{+})\leqq 2$). From  Theorem \ref{Frobe} and Corollary \ref{simples}, the first negative eigenvalue $\lambda_{{0}}$ for $\mathcal L_{+}$ is simple with an associated eigenfunction $(\eta_{{\lambda_0}}, \zeta_{{\lambda_0}})$ having positive components and $\eta_{\lambda_0}$ being even on $[-L,L]$. Therefore, for $\lambda_{{1}}$ being the second negative  eigenvalue for $\mathcal L_{+}$, we need to have $\lambda_{1}>\lambda_{0}$. 

 Let $(f_{{1}}, g_{_{1}}) \in D_+$ be an associated eigenfunction to $\lambda_{{1}}$.  In the following, we divide our analysis into several steps.
 \begin{enumerate}
 \item[1)] Suppose $g_{{1}}\equiv 0$:  then $f_{{1}}(-L)=f_{{1}}(L)=0$ and  $f_{{1}}$ is odd (see step  1) in the proof of Theorem \ref{Frobe}). Now, our profile-solution $\phi$ satisfies   
 $$
 \mathcal L_{0,1} \phi'=0, \; \phi' \; \text{is odd}, \, \phi'(x)>0,\;\text{for}\; x\in [-L, 0),
 $$
   thus, since $\lambda_{1}<0 $ we obtain from the Sturm Comparison Theorem that there is $r\in (-L, 0)$ such that $\phi'(r)=0$, which is a contradiction. Then, $g_{1}$ is non-trivial. 
   
 \item[2)]   Suppose $g_{1}(0+)=0$: we consider the  odd-extension $g_{1, odd}\in H^2(\mathbb R)$ of $g_{1}$ and the unfold operator $\widetilde{\mathcal L}$ in \eqref{Leven} on the $\delta$-interaction domains $D_{\delta, \gamma}$ in \eqref{Ddelta}. Then, $g_{1, odd}\in D_{\delta, \gamma}$ for any $\gamma$ and so by Perron-Frobenius property for $(\widetilde{\mathcal L}, D_{\delta, \gamma})$ (see Appendix) we must have that $n( \widetilde{\mathcal L})\geqq 2$. But, by step $2)$ in the proof of Theorem \ref{Frobe} we obtain $n( \widetilde{\mathcal L})\leqq 1$ for all $\gamma$, and so we get a contradiction.
 
 \item[3)] Suppose $g_{1}(0+)>0$ (without loss of generality): we will see that $g_{{1}}(x)>0$ for all $x>0$. Indeed,  by Lemma \ref{split}
 we get  $g'_{{1}}(0+)=0$.  Thus, by considering  the even-extension $g_{1, even}$ of $ g_{{1}}$ on all the line and the unfold operator $\widetilde{\mathcal L}$  in \eqref{Leven} on  $D_{\delta, 0}$, we have that $g_{1, even}\in D_{\delta, 0}$ and so $n(\widetilde{\mathcal L})\geqq 1$. But, we know that  $n(\widetilde{\mathcal L})\leqq 1$ and so
  $\lambda_{{1}}$ is the smallest eigenvalue for  $(\widetilde{\mathcal L}, D_{\delta,0})$. Therefore, by the Perron-Frobenius property for $(\widetilde{\mathcal L}, D_{\delta, 0})$ (see Appendix) $g_{{1}}$ is strictly positive. We note that as $D_{\delta, 0}=\{f\in H^2(\mathbb R): x^2f\in L^2(\mathbb R)\}$, it follows from classical oscillation theory (see Theorem 3.5  in \cite{Berezin}) that  $g_{1, even}>0$.
 
 \item[4 )] Lastly,  since the pairs $(\zeta_{{\lambda_0}}, \lambda_{{0}})$ and $(g_1, \lambda_1)$ satisfy the eigenvalue problem
 \begin{equation}
  \left\{ \begin{array}{ll}
\mathcal L_{1,1}g(x)=\gamma g(x),\; x>0,\\
g'(0+)=0,
  \end{array} \right.
 \end{equation}
it follows the property that $\zeta_{{\lambda_0}}$ and $g_1$ need to be orthogonal (which is a contradiction). This finishes the proof. 
 \end{enumerate}
 \end{proof}

 \subsection{ Kernel for  $(\mathcal L_{1}, \mathcal D)$}

In the following, we study the nullity index for $\mathcal L_{1}$ on $\mathcal D$ (see item $4)$ in Theorem \ref{FrobeN}. Using the notation at the beginning of this section, it  is sufficient to show the following.

\begin{theorem}\label{ker}  Let us consider $\mathcal L_{+}=\text{diag}(\mathcal L_{0,1}, \mathcal L_{1,1})$ on  $D_+$. Then, the kernel associated with $\mathcal L_{+}$ on $D_+$ is trivial. 

\end{theorem}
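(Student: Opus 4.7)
The plan is to invoke the splitting eigenvalue Lemma~\ref{split} to decouple the equation $\mathcal{L}_+(f,g)=\mathbf{0}$ on $D_+$ into two one-variable ODE problems, then solve each by exploiting the explicit Hermite structure on the half-line and a parity decomposition on the ring. On the half-line, $\mathcal{L}_{1,1}g=0$ is the shifted harmonic-oscillator equation $-g''+((x+a)^2-3)g=0$, whose unique (up to a scalar $\gamma$) solution in the weighted space $\{g:\ x^2 g\in L^2(0,+\infty)\}$ is the first-excited shifted Hermite state
\[
g(x)=\gamma(x+a)e^{-(x+a)^2/2}.
\]
On the ring, the evenness of the potential $V_\phi=c-2-\text{Log}\,\phi^2$ forces the two-dimensional kernel of $\mathcal{L}_{0,1}$ to split into the odd solution $\phi'$ (with sole interior zero at $x=0$, obtained from differentiating the profile equation) and a smooth even companion $v$ normalized by $v(0)=1$, $v'(0)=0$; thus $f=\alpha\phi'+\beta v$. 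Crucially, the Kirchhoff condition at the vertex gives $\phi'(L)=-a\phi(L)/2\neq 0$, since $a>0$ for a single-lobe profile.

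The proof then runs the dichotomy supplied by Lemma~\ref{split}. In the case $g(L)=0$, evaluating at the vertex yields $\gamma a\, e^{-a^2/2}=0$, and since $a\neq 0$ this forces $\gamma=0$, hence $g\equiv 0$. The conditions defining $D_+$ reduce to $f(\pm L)=0$ and $f'(L)=f'(-L)$; the parity decomposition together with $\phi'(L)\neq 0$ gives $\alpha=0$ and $\beta v(L)=\beta v'(L)=0$, with the alternative $v(L)=v'(L)=0$ ruled out by ODE uniqueness against $v(0)=1$, so $\beta=0$ as well. In the case $g(L)\neq 0$, Lemma~\ref{split} supplies Neumann BC $g'(0+)=0$ on the tail and periodic BC on the ring. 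The Neumann condition reads $\gamma(1-a^2)e^{-a^2/2}=0$, whence either $\gamma=0$ (immediate contradiction) or $a^2=1$. On the ring, periodicity gives $\alpha\phi'(L)=0$, hence $\alpha=0$, and $\beta v'(L)=0$; vertex continuity $\beta v(L)=g(0+)\neq 0$ then forces $\beta\neq 0$ and $v'(L)=0$. To rule this out I would invoke the Wronskian identity
\[
\phi(L)v'(L)-\phi'(L)v(L)=-2\int_0^L \phi v\,dx,
\]
derived from $(\phi v'-\phi' v)'=-2\phi v$ (a consequence of $\mathcal{L}_{0,1}v=0$ and $\mathcal{L}_{0,1}\phi=-2\phi$), combined with Sturm separation against the single interior zero of $\phi'$ at $0$, which restricts $v$ to at most two symmetric interior zeros on $(-L,L)$. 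In the zero-free subcase, the signs $v(L)>0$ and $\phi'(L)<0$ force $\phi'(L)v(L)<0$ against the strictly positive integral $\int_0^L \phi v>0$, giving a direct sign contradiction.

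The main obstacle is closing the two-zero subcase within the $a^2=1$ branch of the second case. In that regime Sturm separation and evenness place the zeros at $\pm x_0$ with $0<x_0<L$, so $v(L)<0$, and then $\phi'(L)v(L)>0$ becomes compatible with a positive integral, so the Wronskian identity alone does not conclude. To eliminate this subcase I would supplement the identity with a spectral argument on $\mathcal{L}_{0,1}$ restricted to $[0,L]$ with Neumann boundary conditions: the hypothetical $v$ would be a second Neumann eigenfunction at eigenvalue $0$, while the reference state $\phi$ on $[0,L]$ (positive, with $\phi'(0)=0$ and boundary term $\phi(L)\phi'(L)<0$) produces a Rayleigh quotient strictly below $-2$. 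Combining this with the rigid constraint $\phi'(L)=-\phi(L)/2$ enforced by $a^2=1$ and the orthogonality of $(f,g)$ to the Perron--Frobenius eigenfunction $(\phi,\psi_c)$ (which one checks reduces to a tautology) should preclude $0$ from the Neumann spectrum and close the argument.
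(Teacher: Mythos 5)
Your reduction to the half-line kernel $g=\gamma(x+a)e^{-(x+a)^2/2}$, your parity basis $\{\phi',v\}$ on the ring, the clean disposal of the case where $g$ vanishes at the vertex, and the dichotomy $\gamma=0$ versus $a^2=1$ coming from the Neumann condition of Lemma \ref{split} all parallel the paper's proof (which runs the same splitting with $h=b\psi_a'$ and the condition $\psi_a''(0)=0$). The gap is exactly where you flag it, and your proposed patch does not close it. In the branch $a^2=1$, $\beta\neq0$, you must exclude an even solution $v$ of $\mathcal L_{0,1}v=0$ with $v'(0)=v'(L)=0$, $v(L)\neq0$; your identity $\phi(L)v'(L)-\phi'(L)v(L)=-2\int_0^L\phi v\,dx$ only yields a sign contradiction when $v$ is zero-free, and in the two-zero subcase the suggested Neumann spectral argument proves nothing: showing that the Rayleigh quotient of $\phi$ on $[0,L]$ lies below $-2$ only bounds the \emph{lowest} Neumann eigenvalue ($\nu_0<-2$), which is perfectly compatible with the \emph{second} Neumann eigenvalue being $0$ (which is what the hypothetical $v$ would realize), and you yourself note that the orthogonality constraint is vacuous. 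As written, ``should preclude $0$ from the Neumann spectrum'' is an unproved assertion, so the hard subcase remains open in your argument.

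The missing idea is to take the Wronskian of the \emph{two kernel solutions} $v$ and $\phi'$ rather than of $v$ and $\phi$: since $\mathcal L_{0,1}v=0$ and $\mathcal L_{0,1}\phi'=0$, the function $W(x)=v(x)\phi''(x)-v'(x)\phi'(x)$ is constant on $[-L,L]$. At $x=0$ one has $W=v(0)\phi''(0)=\phi''(0)\neq0$ (as $v(0)=1$, $v'(0)=\phi'(0)=0$ and $\phi$ has a nondegenerate maximum at $0$), while at $x=L$, using $v'(L)=0$ and the key consequence of $a^2=1$ that you never exploit, namely $\phi''(L)=\bigl(c-\text{Log}\,\phi^2(L)\bigr)\phi(L)=\psi_a''(0)=(a^2-1)\psi_a(0)=0$, one gets $W=v(L)\phi''(L)=0$, a contradiction independent of how many zeros $v$ has. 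This is precisely how the paper concludes (there phrased with the eigenfunction $f$, shown even with $f'(L)=0$ via Floquet/oscillation theory, in place of your $v$); with this substitution your otherwise more elementary parity-based argument becomes complete.
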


\begin{proof} Let $(f, h)\in D_+$ such that $\mathcal L_{+}(f, h)^t= {\bf{0}}$. Thus, since $\mathcal L_{1,1}h=0$ and $\mathcal L_{1,1}\psi'_{a}=0$, we obtain from classical Sturm-Liouville theory on half-lines (\cite{Berezin}) that there is   $b\in \mathbb R$ with $h=b\psi'_{a}$ on $(0,+\infty)$. Next, we have the following cases:

\begin{enumerate}
\item[1)]  Suppose $b=0$:  then $h\equiv 0$ and $f$ satisfies $\mathcal L_{0,1} f=0$ with  Dirichlet-periodic conditions 
$$
f(L)=f(-L)=0,\;\;\text{and}\;\; f'(L)=f'(-L).
$$ 
Suppose $f\neq 0$. From the oscillation theory of Floquet theory and  Sturm-Liouville theory for Dirichlet conditions, $\mathcal L_{0,1} \phi'=0$ on $[-L, L]$, $ \phi'$  is odd and $\phi'(L)\neq 0$, we get  $f\equiv 0$ and so $ker(\mathcal L_{+})$ is trivial (see proof of Theorem 1.4 in \cite{AC1}).


\item[2)]  Suppose $b\neq 0$: then $h(0)\neq 0$ ($h(x)>0$ without loss of generality with $b<0$). Hence, from the splitting eigenvalue result in  Lemma \ref{split}, we obtain that $f$ satisfies
\begin{equation}\label{2ker}
  \left\{ \begin{array}{ll}
\mathcal L_{0,1}f(x)=0,\; x\in [-L,L],\\
 f(L)=f(-L)=h(0)>0,\;\;\text{and}\;\;  f'(L)=f'(-L),
  \end{array} \right.
 \end{equation}
and $h'(0)=0$. The last equality implies immediately $
\psi''_{a}(0)=0$,
therefore if we have for $\alpha= \psi''_a(0)$ that $\alpha \neq 0$, we obtain a contradiction. Then, $b=0$ and by item $1)$ above $ker(\mathcal L_{+})=\{{\bf {0}}\}$.

 Next, we consider the case $\alpha=0$. Then, initially, by Floquet theory and oscillation theory,  we have the following partial distribution of eigenvalues, $\beta_n$ and $\mu_n$, associated to $\mathcal L_{0,1}$ with periodic and Dirichlet conditions, respectively, 
\begin{equation}\label{inequal2}
\beta_0<\mu_0<\beta_1\leqq \mu_1\leqq \beta_2<\mu_2<\beta_3.
\end{equation}

 In the following, we will see  $\beta_1=0$ in \eqref{inequal2} and it is simple. Indeed, by \eqref{inequal2} suppose that $0> \mu_1$. Now, we know that  $\mathcal L_{0,1} \phi'=0$ on $[-L, L]$, $\phi'$ is odd and  $\phi'(x)>0$ for $[-L, 0)$, and the eigenfunction associated with $\mu_1$ is odd, therefore from the Sturm Comparison Theorem we get that $\phi'$ needs to have one zero on $(-L, 0)$, which is impossible. Hence, $ 0\leqq \mu_1$.  
 
 Next, suppose that $\mu_1=0$ and let $\chi_1$ be an odd eigenfunction for $\mu_1$. Let $\{\phi',  P\}$ be  a basis of solutions for the problem $\mathcal L_{0,1} g=0$ (we recall that $P$ can be chosen to be even and satisfying  $P(0)=1$ and  $P'(0)=0$). Then, 
  $\chi_1= a \phi'$ with $0=\chi_1(L)=a\phi'(L)$. Hence, $\chi_1\equiv 0$ which is not possible. Therefore,  $0<\mu_1$ and so $\beta_1=0$  is simple with eigenfunction $f$ (being even or odd). We note that  $\beta_2$ is also simple. 

 Lastly, since $f(-L)=f(L)>0$, it follows that $f$ is even and  by Floquet theory $f$ has exactly two different zeros $-a, a$ ($a>0$) on $(-L,L)$. Hence,  $f(0)<0$. Next, we consider the Wronskian function (constant) of $f$ and $\phi'$, 
$$
W(x)=f(x)\phi''(x)-f'(x)\phi'(x)\equiv C,\quad \text{for all}\;\; x\in [-L,L].
$$
 Then, $C=f(0)\phi''(0)>0$. Therefore, by the hypotheses ($\alpha=0$ ) we obtain
\begin{equation}\label{final}
C=f(L)\phi''(L)=h(0)\psi''_{a}(0)= 0,
\end{equation}
which is a contradiction. Then, $b=0$ and by item $1)$ above we get again $ker(\mathcal L_{+})=\{{\bf {0}}\}$. 
 This finishes the proof.
\end{enumerate}
\end{proof}

\subsection{Morse and nullity indices for operator $\mathcal{L}_2$}

\begin{proof}  $[${\bf{Theorem \ref{L2}}}$]$ We consider a positive single-lobe state $(\phi_c, \psi_c)$. Then, from  \eqref{sistema} we get $ (\phi_c, \psi_c)\in ker(\mathcal L_2)$. Next, we consider
$$
\mathcal M=-\partial_x^2+c-\text{Log}(\phi_c^{2}),\;\; \mathcal N=-\partial_x^2+c-\text{Log}(\psi_c^{2})
$$
then for any ${V}=(f,g)\in D_0$, we obtain
\begin{equation}\label{QQ}
\begin{aligned}
 \mathcal M f&=-\frac{1}{\phi_c}\frac{d}{dx}\Big[\phi_c^2 \frac{d}{dx}\Big ( \frac{f}{\phi_c}\Big)\Big],\quad\;\; x\in (-L, L)\\
 \\
  \mathcal N g&=-\frac{1}{\psi_c}\frac{d}{dx}\Big[\psi_c^2 \frac{d}{dx}\Big ( \frac{g}{\psi_c}\Big)\Big],\quad\;\;  x>L.
\end{aligned}
\end{equation}
Thus, we obtain immediately 
$$
\langle \mathcal L_2 {V}, {V}\rangle=\int_{-L}^L \phi_c^2 \Big(\frac{d}{dx}\Big( \frac{f}{\phi_c}\Big)\Big)^2 dx+  \int_{L}^{+\infty} \psi_c^2 \Big(\frac{d}{dx}\Big( \frac{g}{\psi_c}\Big)\Big)^2 dx\geqq 0.
$$
Moreover, since $\langle \mathcal L_2 {V}, {V}\rangle=0$ if and only if $f=d_1\phi_c$ and $g=d_2\psi_c$, we obtain from the continuity property at $x=L$ that $d_1=d_2$. Then, 
 $ker(\mathcal L_2)=\text{span}\{(\phi_c, \psi_c)\}$. This finishes the proof.
\end{proof}


 \section{Existence of the positive single-lobe state}

In this section, our focus is on proving the following existence theorem:

\begin{theorem}\label{Existencia}
    For any $c \in \mathbb R$, there is only one single lobe positive state $\Theta_c=(\phi_c, \psi_c) \in D_0$ that satisfies the NLS-log equation (\ref{nlslogv}), is monotonically decreasing on $[0, L]$ and $[L,+\infty)$, and the map $c\in \mathbb R  \mapsto \Theta_c \in D_0$ is $C^1$.  Moreover, the mass $\mu(c)=Q(\Theta_c)$ satisfies $\frac{d}{dc} \mu(c)>0$.
    \end{theorem}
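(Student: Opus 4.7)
My approach combines a scaling symmetry with a period-function reduction. The first observation is that if $(\phi, \psi)$ satisfies \eqref{sistema} for some $c$, then $(e^{\alpha}\phi, e^{\alpha}\psi)$ satisfies \eqref{sistema} with $c$ replaced by $c + 2\alpha$, and the continuity and Kirchhoff conditions at $\pm L$ are preserved. Hence, once the positive single-lobe state $\Theta_{-1}$ is uniquely constructed at the base value $c = -1$, the family $\Theta_c := e^{(c+1)/2}\Theta_{-1}$ provides a positive single-lobe state for every $c \in \mathbb R$; the map $c \mapsto \Theta_c$ is manifestly $C^1$ (in fact real-analytic), the tail parameter $a$ in $\psi_c(x) = e^{(c+1)/2}e^{-(x-L+a)^2/2}$ is independent of $c$, and the identity $\mu(c) = e^{c+1}\mu(-1)$ delivers $\frac{d}{dc}\mu(c) = \mu(c) > 0$ at once, trivializing the Vakhitov--Kolokolov condition noted after Theorem \ref{stability}.

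The existence and uniqueness problem at $c = -1$ reduces to a single scalar equation. The decay and strict monotonicity on $\psi$ force the Gausson form \eqref{perfil} with $a > 0$, and the evenness of $\phi$ built into the single-lobe hypothesis, combined with Kirchhoff, produces the two boundary values $\phi(L) = e^{(c+1)/2}e^{-a^2/2}$ and $\phi'(L) = -\tfrac{a}{2}\phi(L)$. The ring ODE admits the first integral $(\phi')^2 = F(M) - F(\phi)$, where $F(s) = s^2(\log s^2 - (c+1))$ and $M = \phi(0) \in (e^{c/2}, e^{(c+1)/2})$; substituting the two boundary values eliminates $M$ via $F(M(a)) = -\tfrac{3a^2}{4}e^{c+1-a^2}$, which pins $M(a)$ uniquely on the ascending branch of $F$ above its minimum at $s = e^{c/2}$. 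Equating the elapsed time from $\phi = M$ down to $\phi = e^{(c+1)/2}e^{-a^2/2}$ with the interval length $L$ then yields
\begin{equation*}
P(a) := \int_{e^{(c+1)/2}e^{-a^2/2}}^{M(a)} \frac{d\phi}{\sqrt{F(M(a)) - F(\phi)}} = L.
\end{equation*}

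The main obstacle is to show that $P:(0,+\infty) \to (0,+\infty)$ is a strictly increasing homeomorphism. Endpoint asymptotics are accessible: as $a \to 0^+$, a Taylor expansion yields $M(a) - \phi(L)(a) \sim \tfrac{1}{8}e^{(c+1)/2}a^2$, and combining with the standard square-root singularity of the integrand at the turning point $M(a)$ gives $P(a) \sim a/2$; as $a \to +\infty$, the orbit degenerates to the homoclinic connection passing through the logarithmic saddle at $\phi = 0$, where $(F(M)-F(\phi))^{-1/2} \sim (\phi\sqrt{-\log\phi^2})^{-1}$ produces $P(a) \sim a$. Existence of $a^{*}(L)$ then follows from the intermediate value theorem. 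Strict monotonicity of $P$ is the truly delicate step, since no period-function result for the logarithmic potential appears in the literature; I would obtain it by differentiating $P$ after a change of variables that freezes the limits of integration at $t = 0$ and $t = 1$, and by extracting the sign of the resulting integrand from the convexity of $s \mapsto s^2 \log s^2$ on $(e^{c/2}, +\infty)$, adapting the Chicone--Jacobs type criterion used in \cite{KMP, KP} for polynomial nonlinearities. Once monotonicity of $P$ is established, the implicit function theorem gives smoothness of $L \mapsto a^{*}(L)$, ODE regularity gives smoothness of $\phi_{-1}$ with respect to its data, and the scaling identity from the first paragraph transports the full conclusion to arbitrary $c \in \mathbb R$.
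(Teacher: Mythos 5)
Your overall architecture coincides with the paper's: the scaling $(\phi,\psi)\mapsto(e^{\alpha}\phi,e^{\alpha}\psi)$, $c\mapsto c+2\alpha$, is exactly the paper's transformation (5.1) that removes $c$, the mass identity $\mu(c)=e^{c+1}\mu(-1)$ giving $\mu'(c)>0$ is the paper's argument verbatim, the boundary reductions $\phi(L)=e^{(c+1)/2}e^{-a^{2}/2}$, $\phi'(L)=-\tfrac{a}{2}\phi(L)$ and the relation $F(M(a))=-\tfrac34 a^{2}e^{c+1-a^{2}}$ are correct and are the paper's $E_{0}(r_{0})=-\tfrac34 A(r_{0})$ in the equivalent parametrization $r_{0}=e^{(c+1)/2}e^{-a^{2}/2}$ (your $P(a)$ is the paper's $T(r_{0}(a))$, and your claim that $P$ increases in $a$ is their claim that $T$ decreases in $r_{0}$), and your endpoint asymptotics $P(a)\sim a/2$ as $a\to0^{+}$ and $P(a)\to+\infty$ as $a\to+\infty$ match what the paper proves in Section 5.3.

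The genuine gap is precisely the step you defer: strict monotonicity of the period map. This is not a routine adaptation of a Chicone--Jacobs type criterion, because the quantity at hand is not a classical period function of closed orbits parametrized by energy: the transit time is measured from the non-turning initial point $(r_{0},\tfrac12\sqrt{A(r_{0})})$ to the turning point $r_{+}$, and the energy level itself varies with the same parameter through $E_{0}(r_{0})=-\tfrac34 A(r_{0})$, so off-the-shelf monotonicity criteria (which fix the potential and vary the energy between consecutive turning points) do not apply, and convexity of $s\mapsto s^{2}\log s^{2}$ alone cannot decide the sign. The paper's Lemma 5.2 is the technical core of the existence theorem: it regularizes the integral via the differential identity (5.13)--(5.14), differentiates to get the explicit formula (5.19) for $T'(r_{0})$, analyzes the sign of $f(\phi)=\phi^{2}(3-\mathrm{Log}(\phi^{2}))-e(1+\mathrm{Log}(\phi^{2}))$, and in the regime $r_{0}\in(0,r_{*})$ still needs a pointwise comparison of the integrand with $A'(\phi)/(2\phi^{3})$, an integration by parts, and explicit numerical bounds (e.g.\ $9e<e^{6}/A(r_{0})+\tfrac{27}{4}A(r_{0})$) to conclude $T'(r_{0})<0$; none of this is supplied or replaced by your sketch. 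Without it, the uniqueness assertion of the theorem and the claim that $P$ is a monotone bijection of $(0,+\infty)$ onto $(0,+\infty)$ (which is what guarantees solvability of $P(a)=L$ for every $L$ together with uniqueness) are unsupported; only bare existence for the given $L$ would follow from your endpoint limits and the intermediate value theorem.
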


For the proof of Theorem \ref{Existencia} we need some tools from dynamical systems theory for orbits on the plane and so for the convenience of the reader, we will divide our analysis into several steps (sub-sections) in the following.

\subsection{The period function}

We consider $L=\pi$ (without loss of generality) and $\Psi(x)= \psi(x+\pi)$, $x>0$. Then, the transformation 
\begin{equation} \label{transf}
    \phi_0(x)= e^{\frac{1-c}{2}} \Psi(x), \ \ \phi_1(x)= e^{\frac{1-c}{2}} \phi(x) 
\end{equation}
implies that system (\ref{sistema})with $Z=0$ is transformed into the following system of differential equations independent of the velocity $c$:  
\begin{equation}\label{1}
 \left\{ \begin{array}{ll}
  -\phi_1''(x)+  \phi_1(x)-\text{Log}(\phi_1^2(x))\phi_1(x)=0, \;\;\;\;x\in (-\pi,\pi),\\
   -\phi_0''(x)+  \phi_0(x)-\text{Log}(\phi_0^2(x))\phi_0(x)=0, \;\;\;\;x>0,\\
\phi_1( \pi)=\phi_1(-\pi) =\phi_0(0),\\
\phi_1'( \pi)-\phi_1'(-\pi)=\phi_0'(0).
  \end{array} \right.
 \end{equation}
We know  that a positive decaying solution to the equation $-\phi_0''+  \phi_0-\text{Log}(\phi_0^2)\phi_0=0$ on the positive half-line is expressed by 
\begin{equation}\label{sml}
    \phi_0(x)= e e^{\frac{-(x+a)^2}{2}}, \;\;\;\;x>0, \;\;a\in \mathbb R
\end{equation}  
with $ \phi_0(0)= e e^{\frac{-a^2}{2}}$. If $a>0$, $\phi_0$ is monotonically decreasing on $[0,+\infty)$ (a Gausson tail-profile); and if $a<0$, $\phi_0$ is non-monotone on $[0,+\infty)$ (a Gausson bump-profile). To prove Theorem \ref{Existencia}, we will only consider Gausson-tail profiles for $\phi_0$, therefore, we need to choose $a>0$.

The second-order differential equations in system \eqref{1} are integrable with a first-order invariant given by
\begin{equation}\label{2}
    E(\phi,\xi)= \xi^2 - A(\phi),\ \ \  \xi:= \frac{d\phi}{dx}, \ \ \ A(\phi)= 2\phi^2-log(\phi^2)\phi^2.
\end{equation}
Note that the value of $E(\phi,\xi)=E$ is independent of $x$. Next, for $A(u)=2u^2-{Log}(u^2)u^2$ we have that there is only one positive root of $A'(u)=2u(1-\text{Log}(u^2))$ denoted by $r_*$ such that $A'(r_*)=0$. In fact, $r_*=e^{\frac{1}{2}}$. As shown in Figure 4, for $E=0$ there are two homoclinic orbits: one corresponds to positive $u=\phi$ and the other to negative $u=-\phi$. Periodic orbits exist inside each of the two homoclinic loops and correspond to $E \in (E_*,0) $, where $E_*=-A(r_*)=-e$, and they correspond to either strictly positive $u$ or strictly negative $u$. Periodic orbits outside the two homoclinic loops exist for $E \in (0,+\infty)$ and they correspond to sign-indefinite $u$. Note that

\begin{equation}\label{energ}
    E+A(r_*)>0, \ \ \ E \in (E_*,+\infty).
\end{equation}
The homoclinic orbit with the profile solution (\ref{sml}) and $a>0$ corresponds to $\xi= -\sqrt{A(\phi)}$
for all $x>0$. Let us define $r_0:=  e e^{\frac{-a^2}{2}}$, that is, the value of $\phi_0(x)$ at $x=0$. Then, $-\sqrt{A(r_0)}$ is the value of $\phi'_0$ at $x=0$. Note that :


\begin{itemize}
    \item  $ r_0 \in   \left( 0,e \right)$ is a free parameter obtained from $a \in (0,+\infty)$.

    \item $r_0(a) \longrightarrow e  $ when $a \longrightarrow 0$.

    \item $r_0(a) \longrightarrow 0  $ when $a \longrightarrow +\infty$.
\end{itemize}

Hence, the profile $\phi_1$ will be found from the following boundary-value problem:
\begin{equation}\label{ep}
 \left\{ \begin{array}{ll}
  -\phi_1''(x)+  \phi_1(x)-\text{Log}(\phi_1^2(x))\phi_1(x)=0, \;\;\;\;x\in (-\pi,\pi),\\
\phi_1( \pi)=\phi_1(-\pi) =r_0,\\
\phi_1'( -\pi)=-\phi_1'(\pi)=\frac{\sqrt{A(r_0)}}{2}
  \end{array} \right.
 \end{equation}
where $ r_0 \in   \left( 0,e \right)$ will be considered as a free parameter of the problem. The positive single-lobe state $\phi_1$ will correspond to a part of the level curve $E(\phi, \xi) = E$ which intersects $r_0$ only twice at the ends of the interval $[-\pi, \pi]$ (see Figure 4 for a geometric construction of a positive single-lobe state for \eqref{ep}). 
\begin{figure}[h]
 	\centering
\includegraphics[angle=0,scale=0.12]{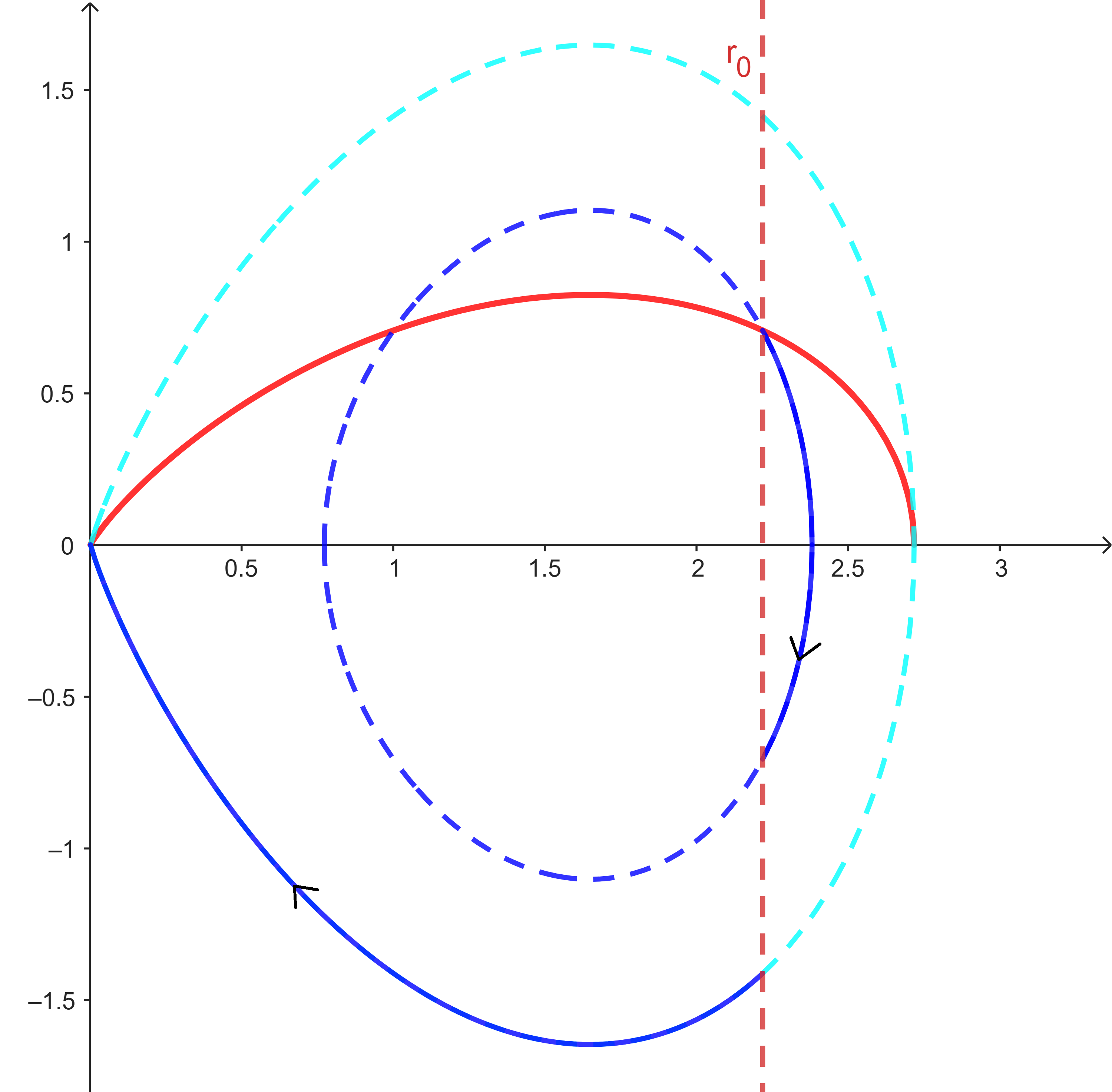} 
	\caption{$\cdot$  \tiny{Red line:  $s_0(r_0)=\frac12 \sqrt{A(r_0)}=\frac12 \phi_0'(0)$.\\
$\cdot$  Dashed-dotted vertical line depicts the value of $r_0 = \phi_0(0) = ee^{-a^2/2}$.\\ 
$\cdot$  The blue dashed curve represents the homoclinic orbit at $E = 0$, with the solid part depicting the shifted-tail  NLS-log soliton.\\
$\cdot$  The level curve $E(\phi,\zeta)=E(r_0,s_0)$, at $r_0= ee^{-a^2/2}$ and $s_0=\frac12 \sqrt{A(p_0)}$, is shown by the blue dashed line. \\
$\cdot$  The blue solid parts depict a suitable positive single-lobe state profile solution for the NLS-log.}}
\end{figure}

Let us denote by $s_0=\frac{\sqrt{A(r_0)}}{2}$ and we define the period function for a given $(r_0, s_0)$: 

\begin{equation}\label{fp}
    T_+(r_0,s_0):= \int_{r_0}^{r_+}\frac{d\phi}{\sqrt{E+A(\phi)}}, 
\end{equation}
where the value $E$ and the turning point $r_+$ are defined from $(r_0, s_0)$ by

\begin{equation} \label{definterv}
    E=s_0^2 - A(r_0)=-A(r_+).
\end{equation}
For each level curve of $E(\phi, \xi) = E$ inside the homoclinic loop we have $E \in (E_*,0)$, and  the turning point satisfies: 

\begin{equation}\label{pontos}
    0< r_* < r_+ < e. 
\end{equation}
We recall that $\phi_1$ is a positive single-lobe solution of the boundary-value problem (\ref{ep}) if and only if $r_0 \in (0,e)$ is a root of the nonlinear equation
\begin{equation} \label{eqphi1}
    T(r_0)= \pi, \ \ \ \text{where} \ \  T(r_0)= T_+\left( r_0,\frac{\sqrt{A(r_0)}}{2}\right).
\end{equation}
We note that, as $T (r_0)$ is uniquely defined by $r_0 \in \left(0, e\right)$, the nonlinear equation (\ref{eqphi1}) defines a unique mapping $\left(0, e\right) \ni r_0  \mapsto T(r_0) \in (0,+\infty)$. In the following sub-section, we show the monotonicity of this function. 

\subsection{Monotonicity of the period function}
 

In this sub-section, we will use the theory of dynamical systems  for orbits on the plane and the period function in \eqref{fp} to prove that the mapping $\left(0, e\right) \ni r_0  \mapsto T(r_0) \in (0,+\infty)$ is $C^1$ and monotonically decreasing.


Recall that if $W(\phi, \xi)$ is a $C^1$ function in an open region of $\mathbb{R}^2$, then the differential of $W$ is defined by
\begin{equation*}
    dW(\phi,\xi)=\frac{\partial W}{\partial \phi}d\phi + \frac{\partial W}{\partial \xi}d\xi
\end{equation*}
and the line integral of $dW(\phi,\xi)$ along any $C^1$ contour $\gamma$ connecting $(\phi_0,\xi_0)$ and $(\phi_1,\xi_1)$ does not depend on $\gamma$ and is evaluated as 

$$\int_\gamma dW(\phi,\xi)= W(\phi_1,\xi_1)- W(\phi_0,\xi_0).$$

At the level curve of $E(\phi, \xi) = \xi^2 - A(\phi) = E$, we can write

\begin{equation}\label{dif}
    d\left[ \frac{2(A(\phi)-A(r_*))\xi}{A'(\phi)}\right] = \left[2- \frac{2(A(\phi)-A(r_*))A''(\phi)}{\left[A'(\phi)\right]^2}\right]\xi d\phi +  \frac{2(A(\phi)-A(r_*))}{A'(\phi)} d\xi
\end{equation}
where the quotients are not singular for every $\phi > 0$. In view of the fact that $2\xi d\xi=A'(\phi)d\phi$ on the level curve $E(\phi, \xi) = E$, we can express (\ref{dif}) as: 

\begin{equation}\label{dif2}
    \frac{(A(\phi)-A(r_*))}{\xi} d\phi = -\left[2- \frac{2(A(\phi)-A(r_*))A''(\phi)}{\left[A'(\phi)\right]^2}\right]\xi d\phi +  d\left[ \frac{2(A(\phi)-A(r_*))\xi}{A'(\phi)}\right].
\end{equation}

The following lemma justifies the monotonicity of the mapping $\left(0, e\right) \ni r_0  \mapsto T(r_0) \in (0,+\infty)$.

\begin{lemma}\label{periodomono}
    The function $r_0  \mapsto T(r_0)$ is $C^1$ and monotonically decreasing for every $r_0 \in \left(0, e \right)$.  
\end{lemma}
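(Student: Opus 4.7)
The plan is to separately establish $C^1$ regularity and strict monotonicity of $T$. The key technical obstacle is the inverse-square-root singularity of the integrand in \eqref{fp} at the upper limit $r_+ = r_+(r_0)$, which obstructs naive differentiation under the integral sign.

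For $C^1$ regularity, I will perform the change of variables $\phi = r_+(r_0) - (r_+(r_0) - r_0) u^2$, which maps $u \in [0,1]$ bijectively onto $\phi \in [r_0, r_+]$. The implicit function theorem, applied to $A(r_+) = \tfrac{3}{4} A(r_0)$ on the branch $r_+ \in (r_*, e)$ where $A'(r_+) \neq 0$, yields $r_+ \in C^\infty(0, e)$ as a function of $r_0$. A Taylor expansion of $A$ around $r_+$ together with $A'(r_+) < 0$ shows that the transformed integrand extends smoothly to $u = 0$ and is jointly smooth in $(u, r_0)$; differentiation under the integral then delivers $T \in C^1(0, e)$ together with an explicit formula for $T'(r_0)$.

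For monotonicity, I will exploit the exact-form identity \eqref{dif2}. Integrating it along the level curve $\xi = \sqrt{E + A(\phi)}$ from $r_0$ to $r_+$, the boundary contribution at $r_+$ vanishes since $\xi(r_+) = 0$ while $A'(r_+) \neq 0$. Writing $A(\phi) - A(r_*) = (E + A(\phi)) - (E + A(r_*))$ and using $E = -\tfrac{3}{4} A(r_0)$ with $A(r_*) = e$ yields the representation
\[
\Big(e - \tfrac{3}{4} A(r_0)\Big) T(r_0) = \int_{r_0}^{r_+} \Big[3 - \frac{2(A(\phi) - e) A''(\phi)}{[A'(\phi)]^2}\Big]\sqrt{E + A(\phi)}\, d\phi + \frac{(A(r_0) - e)\sqrt{A(r_0)}}{A'(r_0)},
\]
in which the integrand is non-singular even when $r_* \in (r_0, r_+)$, because the double zero of $A(\phi) - e$ at $\phi = r_*$ cancels the pole of $[A'(\phi)]^{-2}$. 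Differentiating this identity in $r_0$ and using the regularized form of $T'(r_0)$ from the previous step reduces the claim $T'(r_0) < 0$ to the negativity of a single explicit integral expression.

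The main obstacle will be verifying this sign. With the closed forms $A'(\phi) = 2\phi(1 - \log(\phi^2))$ and $A''(\phi) = -2(1 + \log(\phi^2))$, the resulting integrand is a mixed rational-logarithmic function of $\phi$. I expect the analysis to be delicate because of a case split at $r_0 = r_* = \sqrt{e}$, where both $A'(r_0)$ and the sign of the boundary term flip; on the other hand, $A''(\phi) < 0$ throughout $\phi > e^{-1/2}$ (which contains $(r_*, r_+)$), which pins down the sign of the dominant part of the integrand, and the contribution of the boundary term, handled case by case, is expected to close the argument.
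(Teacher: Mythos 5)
Your setup for the monotonicity half is essentially the paper's: integrating the exact form \eqref{dif2} along the level curve and killing the boundary contribution at $r_+$ gives precisely the regularized identity \eqref{funcionp} (your displayed formula coincides with it after substituting $s_0=\tfrac12\sqrt{A(r_0)}$ and $A(r_*)=e$), and differentiating in $r_0$ leads to the paper's expression \eqref{difp} for $T'(r_0)$. Your route to $C^1$ regularity is different but legitimate: the substitution $\phi=r_+-(r_+-r_0)u^2$ together with the implicit function theorem for $r_+(r_0)$ on the branch where $A'(r_+)<0$ does desingularize \eqref{fp}, since the only zero of $E_0(r_0)+A(\phi)$ on $[r_0,r_+]$ is the simple zero at $\phi=r_+$; the paper instead reads regularity directly off the singularity-free representation \eqref{funcionp}, which is slightly more economical because the same formula is then differentiated to get $T'$.

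The genuine gap is that the sign $T'(r_0)<0$ is never actually established, and this is the heart of the lemma. First, the heuristic that $A''<0$ on $(r_*,r_+)$ ``pins down the sign of the dominant part of the integrand'' is not enough: on $(r_*,r_+)$ one also has $A(\phi)-e<0$, so the bracket in \eqref{difp} is $1$ minus a positive quantity, and determining its sign requires the explicit computation showing the bracket equals $f(\phi)/[\phi^2(1-\mathrm{Log}(\phi^2))^2]$ with $f$ as in \eqref{numerador}, negative on $(r_*,e)$ and positive on $(0,r_*)$. Second, and more seriously, even with that sign in hand, for $r_0\in(0,r_*)$ the prefactor $-\tfrac{3}{8}A'(r_0)$ is negative while the contribution of the integral over $(r_*,r_+)$ (the paper's $I_2$) is negative, so this portion enters $T'(r_0)$ with the adverse, positive sign; the conclusion is therefore a quantitative competition between this term and the negative terms $-\tfrac{e}{4s_0}-\tfrac38 A'(r_0)I_1$, not a case-by-case bookkeeping of signs. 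In the paper this is the bulk of the proof: $I_2$ is bounded from below via the pointwise inequality \eqref{dessup} and an integration by parts, and the remaining terms are controlled using $\max_{(0,r_*)}A'=4e^{-1/2}$ together with the elementary inequality $\tfrac{e^{6}}{A(r_0)}+\tfrac{27}{4}A(r_0)>9e$. No estimate of this kind (or any substitute) appears in your plan, so the claim is unproven exactly where it is hard, namely for $r_0\in(0,r_*)$; only the easy case $r_0\in[r_*,e)$, where $A'(r_0)\leqq 0$ and $f\leqq 0$ on $[r_0,r_+]$, would follow quickly from your outline.
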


\begin{proof}
    Since $s_0=\frac{\sqrt{A(r_0)}}{2}$ in (\ref{eqphi1}), for a given $r_0 \in \left(0, e\right)$, we have that the value of $T(r_0)$ is obtained from the level curve $E(\phi,\xi)=E_0(r_0)$, where 
\begin{equation}\label{energconst}
        E_0(r_0)\equiv  s_0^2 - A(r_0)= -\left( 1-\frac{1}{4} \right)A(r_0)=-\frac{3}{4}A(r_0).
    \end{equation}
For every $r_0 \in \left(0, e\right)$, we use the formula (\ref{dif2}) to get 

    \begin{equation}\label{funcionp}
    \begin{aligned}
        \left[E_0(r_0)+A(r_*)\right]T(r_0) &= \int_{r_0}^{r_+} \left[\xi - \frac{A(\phi)-A(r_*)}{\xi}  \right]d\phi \\
        &= \int_{r_0}^{r_+} \left[3 - \frac{2(A(\phi)-A(r_*))A''(\phi)}{\left[A'(\phi)\right]^2}  \right]\xi d\phi + \frac{2(A(r_0)-A(r_*))s_0}{A'(r_0)},
    \end{aligned}
    \end{equation}
    where we use (\ref{definterv}) to obtain that $\xi=0$ at $\phi= r_+$ and $\xi=s_0$ at $\phi=r_0$. Because the integrands are free of singularities and $E_0(r_0)+A(r_*) > 0$ due to (\ref{energ}), the mapping $\left(0, e\right) \ni r_0  \mapsto T(r_0) \in (0,+\infty)$ is $C^1$. We only need to prove that $T'(r_0)<0$ for every $r_0 \in \left(0, e\right)$. 
    
    Differentiating (\ref{funcionp}) with respect to $r_0$ yields

    \begin{equation} \label{difp}
    \begin{aligned}
    \left[E_0(r_0)+A(r_*)\right]T'(r_0) =& \left( [E_0(r_0)+A(r_*)]T(r_0) \right)' - E_0'(r_0)T(r_0) \\
    =& \int_{r_0}^{r_+} \left[3 - \frac{2(A(\phi)-A(r_*))A''(\phi)}{\left[A'(\phi)\right]^2}  \right]\frac{\partial \xi}{\partial r_0} d\phi - \left[3 - \frac{2(A(r_0)-A(r_*))A''(r_0)}{\left[A'(r_0)\right]^2}  \right]s_0 \\
    &+ \left[2- \frac{2(A(r_0)-A(r_*))A''(r_0)}{\left[A'(r_0)\right]^2}\right]s_0 + \frac{2(A(r_0)-A(r_*))}{A'(r_0)}s_0' - E_0'(r_0) \int_{r_0}^{r_+} \frac{d\phi}{\xi} \\
    =& -\frac{A(r_*)}{4s_0} - \frac{3A'(r_0)}{8}\int_{r_0}^{r_+}\left[1 - \frac{2(A(\phi)-A(r_*))A''(\phi)}{\left[A'(\phi)\right]^2}  \right] \frac{d\phi}{\xi},
    \end{aligned}
    \end{equation}
where we have used 

$$ s_0=\frac{2A(r_0)s_0'}{A'(r_0)}, \ \ \  E_0'(r_0)=-\frac{3A'(r_0)}{4},\ \ \ s_0'(r_0)= \frac{A'(r_0)}{8s_0},\ \ \ \frac{\partial \xi}{\partial r_0}= \frac{E_0'(r_0)}{2\xi}.$$

As $ A(\phi)=2\phi^2 - \text{Log}(\phi^2)\phi^2$, we 
obtain that (\ref{difp}) is as follows:

\begin{equation}\label{difpp}
[E_0(r_0)+A(r_*)]T'(r_0) = -\frac{A(r_*)}{4s_0} - \frac{3A'(r_0)}{8}\int_{r_0}^{r_+} \frac{\phi^2(3-\text{Log}(\phi^2)) - e(1+\text{Log}(\phi^2))}{\phi^2(1-\text{Log}(\phi^2))^2 \xi}d\phi.  
\end{equation}

Let us define the following function: 
\begin{equation}\label{numerador}
    f(\phi)=\phi^2(3-\text{Log}(\phi^2)) -e(1+\text{Log}(\phi^2)).
\end{equation}

\begin{figure}[h]
 	\centering
\includegraphics[angle=0,scale=0.3]{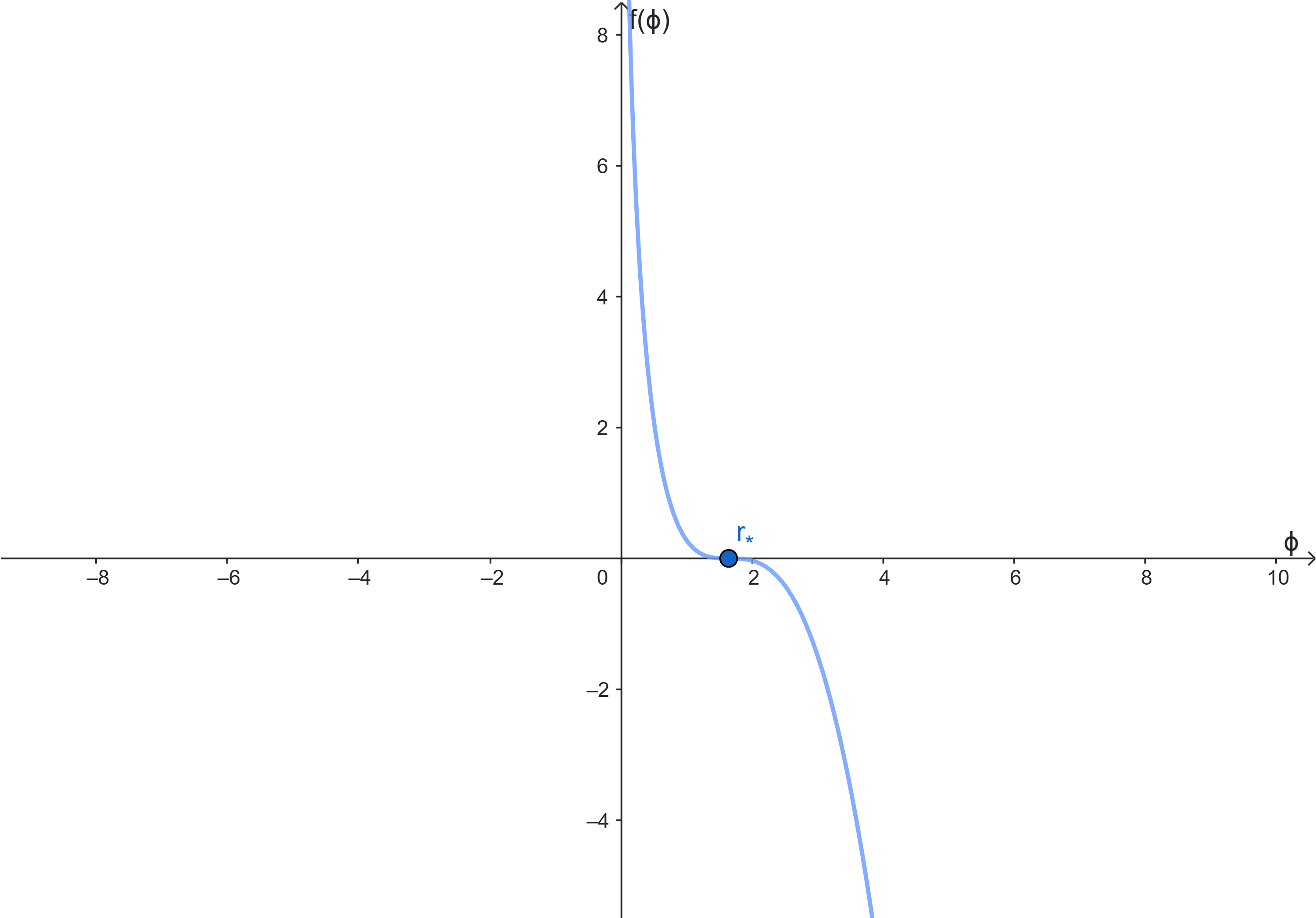}
\caption{Graph of $f$ in \eqref{numerador}}
\end{figure}

Differentiating (\ref{numerador}) with respect to $\phi$ yields: 
\begin{equation}
\begin{aligned}
    &f'(\phi)=2\phi(2-\text{Log}(\phi^2)) -\frac{2e}{\phi}\\
    \Leftrightarrow&\;\; \phi f'(\phi)=2\phi^2(2-\text{Log}(\phi^2)) -2e  \ \ \ \ \ \ \  (\phi>0).
\end{aligned}
\end{equation}

Note that $f'(\phi)=0 \Leftrightarrow \phi=r_*$ and $f'(\phi)<0$ for any $ \phi \in (0,r_*) \cup (r_*,+\infty)$. Since $f(\phi)=0 \Leftrightarrow \phi= r_*$ we have that $f(\phi)>0$ for any   $\phi \in (0,r_*)$ and $f(\phi)<0$ for any  $\phi \in (r_*,+\infty)$. 

Next, we know that $A'(r_0)<0$ for any $r_0 \in (r_*,e) $ and $f(\phi)<0$ for $\phi \in [r_0,r_+]\subset (r_*,e)$ then follows that  $T'(r_0)<0$. Similarly, since $A'(r_*)=0$, we have $T'(r_*)<0$.

Now, we consider the case $r_0 \in (0,r_*)$.  Then,
 \begin{equation}\label{suma}
 \begin{aligned}
     \int_{r_0}^{r_+} \frac{\phi^2(3-\text{Log}(\phi^2)) - e(1+\text{Log}(\phi^2))}{\phi^2(1-\text{Log}(\phi^2))^2 \xi}d\phi = I_1 + I_2
\end{aligned}
 \end{equation}
where 
\begin{equation}
    \begin{aligned}
        I_1 &= \int_{r_0}^{r_*} \frac{\phi^2(3-\text{Log}(\phi^2)) - e(1+\text{Log}(\phi^2))}{\phi^2(1-\text{Log}(\phi^2))^2 \xi}d\phi,\\
        I_2 &= \int_{r_*}^{r_+} \frac{\phi^2(3-\text{Log}(\phi^2)) - e(1+\text{Log}(\phi^2))}{\phi^2(1-\text{Log}(\phi^2))^2 \xi}d\phi
    \end{aligned}
\end{equation}
Because $f(\phi)>0$ for any   $\phi \in (0,r_*)$,  $I_1>0$.  By using $A'(\phi)=2\phi(1-\text{Log}(\phi^2)) $, we get that (see Figure 6)
\begin{equation}\label{dessup}
    \frac{\phi^2(3-\text{Log}(\phi^2)) - e(1+\text{Log}(\phi^2))}{\phi^2(1-\text{Log}(\phi^2))^2 } > \frac{A'(\phi)}{2\phi^3}
\ \  \text{for any   } \phi \in (r_*, e) \supset (r_*,r_+).
\end{equation}

\begin{figure}[h]
 	\centering
\includegraphics[angle=0,scale=0.2]{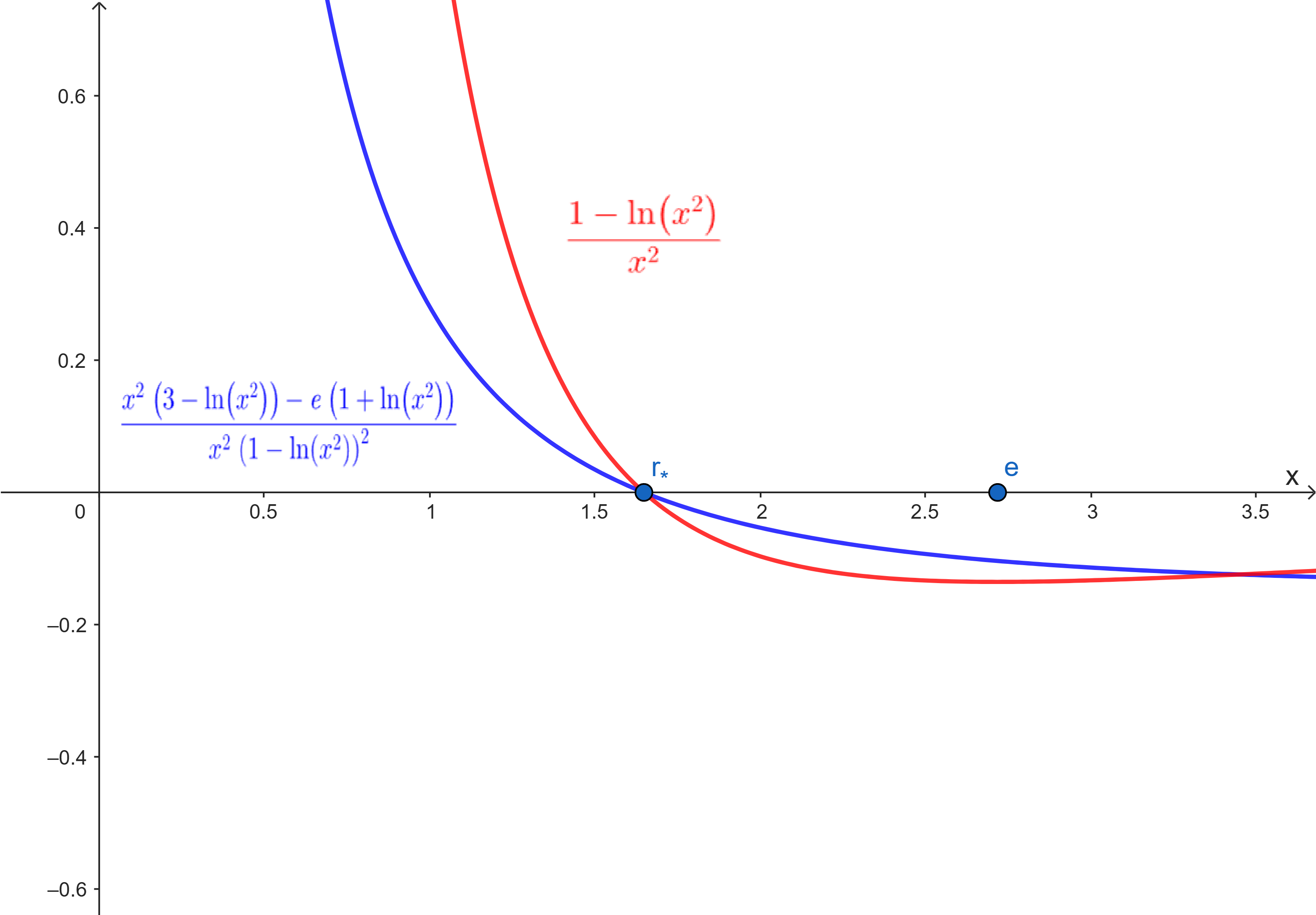}
\caption{Inequality  in \eqref{dessup}}
\end{figure}
By (\ref{dessup}) and proceeding by integration by parts, we have 

\begin{equation}\label{des}
    \begin{aligned}
        I_2 &> \int_{r_*}^{r_+} \frac{A'(\phi)}{2\phi^3\sqrt{E_0(r_0)+A(\phi)}}d\phi \\
        &= - \frac{ \sqrt{E_0(r_0)+A(r_*)} }{r_*^3} + 3\int_{r_*}^{r_+} \frac{\sqrt{E_0(r_0)+A(\phi)}}{\phi^4} d\phi\\
        &= - \frac{ \sqrt{E_0(r_0)+A(r_*)} }{r_*^3} + 3\int_{r_*}^{r_+} \frac{\xi }{\phi^4} d\phi
    \end{aligned}
\end{equation}

Substituting this into (\ref{difpp}) yields

\begin{equation}\label{difpdes}
\begin{aligned}
\left[E_0(r_0)+A(r_*)\right]T'(r_0) &= -\frac{A(r_*)}{4s_0} - \frac{3A'(r_0)}{8}I_1 - \frac{3A'(r_0)}{8}I_2  \\
&< -\frac{A(r_*)}{4s_0}+ \frac{3A'(r_0)}{8}\frac{ \sqrt{E_0(r_0)+A(r_*)} }{r_*^3} - \frac{3A'(r_0)}{8}I_1  \\ 
&\ \ \ -\frac{9A'(r_0)}{8} \int_{r_*}^{r_+} \frac{\xi }{\phi^4} d\phi.
\end{aligned}
\end{equation}

To evaluate the first two terms we will use that $A(r_*)=e$, $s_0= \frac{\sqrt{A(r_0)}}{2}$, and $\max_{r_0 \in (0,r_*)} A'(r_0)=4e^{-\frac{1}{2}}$, so that we get

\begin{equation}
    \begin{aligned}
        -\frac{A(r_*)}{4s_0}+ \frac{3A'(r_0)}{8}\frac{ \sqrt{E_0(r_0)+A(r_*)} }{r_*^3} 
        &= -\frac{e}{2\sqrt{A(r_0)}}+ \frac{3A'(r_0)}{8}\frac{ \sqrt{E_0(r_0)+e^c} }{e^{\frac{3}{2}}}\\
        &< -\frac{e}{2\sqrt{A(r_0)}}+ \frac{3\sqrt{E_0(r_0)+e} }{2e^{2}}.
    \end{aligned}
\end{equation}

For (\ref{energconst}) we get  

\begin{equation}\label{des1}
    \begin{aligned}
        -\frac{e}{2\sqrt{A(r_0)}}&+ \frac{3\sqrt{E_0(r_0)+e} }{2e^{2}}<0
        \Leftrightarrow \frac{3\sqrt{E_0(r_0)+e} }{2e^{2}} < \frac{e}{2\sqrt{A(r_0)}}\Leftrightarrow \frac{9(E_0(r_0)+e)}{4e^{4}} < \frac{e^{2}}{4A(r_0)} \\
        &\Leftrightarrow -\frac{27A(r_0)}{4} + 9e < \frac{e^{6}}{A(r_0)} \Leftrightarrow 9e < \frac{e^{6}}{A(r_0)} + \frac{27A(r_0)}{4}.
    \end{aligned}
\end{equation}

Since $A'(r_0)>0$ for any $r_0 \in (0,r_*)$, $A(r_0)$ increases monotonically for any $r_0 \in (0,r_*)$, and $\frac{1}{A(r_0)}$ decreases monotonically for any $r_0 \in (0,r_*)$. Differentiating $\frac{e^{6}}{A(r_0)} + \frac{27A(r_0)}{4}$ with respect to $r_0$ yields 

\begin{equation}
    \begin{aligned}
        \left( \frac{e^{6}}{A(r_0)} + \frac{27A(r_0)}{4}  \right)' &= -\frac{e^{6}A'(r_0)}{(A(r_0))^2}+ \frac{27A'(r_0)}{4} < -\frac{e^{6}A'(r_0)}{(A(r_*))^2}+ \frac{27A'(r_0)}{4} \\
        &= -e^{4}A'(r_0)+ \frac{27A'(r_0)}{4} = \left( \frac{27}{4} -e^4 \right) A'(r_0)<0.
    \end{aligned}
\end{equation}

Therefore, the function $\frac{e^{6}}{A(r_0)} + \frac{27A(r_0)}{4}$ is  monotonically decreasing for any $r_0 \in (0,r_*)$. Using this we get that

\begin{equation}
        \frac{e^{6}}{A(r_0)} + \frac{27A(r_0)}{4} > 
        \frac{e^{6}}{A(r_*)} + \frac{27A(r_*)}{4} 
        = \left( e^{4} + \frac{27}{4} \right) e 
        > 9e
\end{equation}

Thus, for (\ref{des1}) we get 

$$-\frac{A(r_*)}{4s_0}+ \frac{3A'(r_0)}{8}\frac{ \sqrt{E_0(r_0)+A(r_*)} }{r_*^3}< 0 \ \  \text{for any } r_0 \in (0,r_*).$$ 

As a result of the above calculations, for every $r_0 \in (0,r_*)$ we have $T'(r_0)<0$. This completes the proof. 
 
\end{proof}

\subsection{Proof of Theorem \ref{Existencia}}

\begin{proof}  Due to the monotonicity of the period function $T(r_0)$ in $r_0$ given by Lemma \ref{periodomono}  we have a diffeomorphism $\left(0, e \right) \ni r_0  \mapsto T(r_0) \in (0,+\infty)$.  In fact, we will show that $T(r_0)\longrightarrow 0$ when $r_0 \longrightarrow e$ and $T(r_0)\longrightarrow +\infty$, when $r_0 \longrightarrow 0$.  Indeed,  from (\ref{definterv}), (\ref{energconst}) and (\ref{pontos}), we have  that for  $r_0 \in (0, e)$, the equation $E_0(r_0) = -A(r_+)$ determines $r_+=r_+(r_0)$ from the nonlinear equation:
\begin{equation}\label{defppost}
        2r_+^2-\text{Log}(r_+^2)r_+^2 = \frac{3}{4}[2r_0^2-\text{Log}(r_0^2)r_0^2],
    \end{equation}
 and $\lim_{r_0 \rightarrow e} \frac{3}{4}[2r_0^2-\text{Log}(r_0^2)r_0^2]=0$. Now for (\ref{defppost}) we have 

    \begin{equation}
        \begin{aligned}
            2r_+^2-\text{Log}(r_+^2)r_+^2 \rightarrow 0, \text{ when} \ r_0 \rightarrow e &\Leftrightarrow r_+ \rightarrow e , \text{ when} \ r_0 \rightarrow e\\
            &\Leftrightarrow |r_+ - r_0|\rightarrow 0, \text{ when} \ r_0 \rightarrow e.
        \end{aligned}
    \end{equation}
Since the weakly singular integrand below is integrable, we have 
\begin{equation}
        T(r_0)= \int_{r_0}^{r_+} \frac{d\phi}{\sqrt{E+A(\phi)}} = \int_{r_0}^{r_+} \frac{d\phi}{\sqrt{A(\phi)-A(r_+)}} \rightarrow 0, \text{ when} \ r_0 \rightarrow e,
    \end{equation}
    Next,  for every $0 < r_0 < r_+ < e$ we obtain
\begin{equation}
    \begin{aligned}
        T(r_0)= \int_{r_0}^{r_+}& \frac{d\phi}{\sqrt{A(\phi)-A(r_+)}} \geq \int_{r_0}^{r_+} \frac{d\phi}{\sqrt{A(\phi)}}\\
        &= \int_{r_0}^{r_+} \frac{d\phi}{\sqrt{2\phi^2 -\text{Log}(\phi^2)\phi^2}} = \int_{r_0}^{r_+} \frac{d\phi}{\phi \sqrt{2-\text{Log}(\phi^2)}}.
    \end{aligned}
    \end{equation}
    
    Since $r_+ \in (r_*, e)$ and 
    $$
    \lim_{r_0 \rightarrow 0} \frac{3}{4}[2r_0^2-\text{Log}(r_0^2)r_0^2]=0,
    $$ 
    by (\ref{defppost}) we have $r_+ \rightarrow e , \text{ when} \ r_0 \rightarrow 0$. Therefore, as 
    $$
    \int_{0}^{e} \frac{d\phi}{\phi \sqrt{(2-\text{Log}(\phi^2)}} = \lim_{\phi \rightarrow 0}\sqrt{2-\text{Log}(\phi^2)}=+\infty
    $$ 
    we have $T(r_0) \rightarrow +\infty$ as $r_0 \rightarrow 0$. Then, as the function $T(r_0)$ is monotone decreasing, the codomain of the function $r_0  \mapsto T(r_0)$ is indeed $(0,+\infty)$.

Thus, there is a unique $r_0\in (0, e)$ such that $T(r_0)=\pi$. Therefore, the boundary-value problem \eqref{ep} has a solution. Next, define $a_0>0$ such that $ e e^{\frac{-a_0^2}{2}}=r_0$. Then,
$$
\phi_c(x)=e^{\frac{c-1}{2}}\phi_1(x)\;\;\text{and}\;\; \psi_c(x)=e^{\frac{c+1}{2}}e^{\frac{-(x-\pi+a_0)^2}{2}}
$$
satisfies (\ref{sistema}) with $Z=0$. Obviously, we have that  $ \mathbb R \ni c \mapsto \Theta(c)=(\phi_c,\psi_c) \in D_0$ is a $C^1$-mapping of positive single-lobe state for the NLS-log equation on the tadpole graph.

Next, the relation
$$
\mu(c)=\|\Theta(c)\|^2= e^{c-1}\Big[\int_{-\pi}^\pi \phi_1^2(x)dx + \int_{0}^{+\infty} \phi_0^2(x)dx\Big]
$$
  with $   \phi_0(x)=ee^{\frac{-(x+a_0)^2}{2}}$ and $\phi_1$ independent of $c$, implies $\mu'(c)>0$ for all $c\in \mathbb R$. The proof of the theorem is completed.

\end{proof}

\section{Proof of the stability theorem }

In this section, we show Theorem \ref{stability} based on the stability criterion in Theorem \ref{main} (Appendix).

\begin{proof} By Theorem \ref{Existencia} we obtain the existence of a $C^1$-mapping $c \in \mathbb R \rightarrow \Theta_c= (\phi_c, \psi_c)$ of positive single-lobe states for the NLS-log model on a tadpole graph. Moreover, the mapping $c\to \| \Theta_c\|^2$ is strictly increasing.  Next,  from Theorem \ref{FrobeN}  we get that the Morse index for $\mathcal L_1$ in \eqref{L+}, $n(\mathcal L_{1})$, satisfies $n(\mathcal L_{1})=1$ and  $ker(\mathcal L_{1})=\{\bf{0}\}$.  Moreover, Theorem \ref{L2} establishes that $Ker(\mathcal L_{2})=span\{(\phi_c, \psi_c)\}$ and $\mathcal L_{2}\geqq 0$. Thus, by Theorem \ref{global} and stability criterion in Theorem \ref{main}, we obtain that  $e^{ict}
(\phi_c, \psi_c)$ is orbitally stable in $\widetilde{W}$. This finishes the proof.
\end{proof}

\section{Discussion and open problems}
\label{secconclu}
In this paper, we have established the existence and orbital stability of standing wave solutions for the NLS-log model on a tadpole graph with a profile being a positive single-lobe state. To that end, we use  tools from dynamical systems theory for orbits on the plane and we use the period function for showing the existence of  such a state with Neumann-Kirchhoff condition at the vertex $\nu=L$ ($Z = 0$ in \eqref{sistema}).  We believe that the existence (and simultaneous stability properties) of positive single-lobe solutions on the tadpole graph can be obtained via variational analysis applied to the constrained problem  
\[
I_\lambda = \inf \left\{ E(U) : U \in W(\mathcal{G}), \; Q(U) = \lambda > 0 \right\},
\]  
 For this analysis, we would to use the approach in Cazenave \cite{Caz83} combined with symmetric rearrangement strategies on metric graphs (see Adami {\it et al.} \cite{ACFN} or  Ardila \cite{Ar0}).  
Furthermore, we note that via this approach for the existence, the phase velocity $c$ in the vectorial NLS-log equation \eqref{nlslogv} is determined by the Lagrange multiplier associated with the minimization problem $I_\lambda$. Additionally, stability information (after proving the globally well-posed theory in $W(\mathcal{G})$)  is only established for the following minimizing set  
\[
G_\lambda = \left\{ U \in W(\mathcal{G}) : E(U) = I_\lambda, \; Q(U) = \lambda > 0 \right\}.
\]  
Thus, the advantage of our approach using the period function is that we can demonstrate the existence of positive single-lobe states for any $c \in \mathbb{R}$. 

The orbital stability of the positive single-lobe states established in Theorem \ref{Existencia}  is based on the framework of Grillakis {\it et al.}  in  \cite{GrilSha87} adapted to the tadpole graph and so via a splitting eigenvalue method and tools of the extension theory of Krein-von Neumann for symmetric operators and  the Sturm Comparison Theorem we identify the Morse index and the nullity index of a specific linearized operator around a positive single-lobe state which is a fundamental ingredient in this endeavor. For the case $Z\neq 0$ in \eqref{sistema} and by supposing the existence of a positive single-lobe state, is possible to obtain similar results for $(\mathcal L_2, D_Z)$ as in Theorem \ref{L2}.  Statements $1)-2)-3)$ in Theorem \eqref{FrobeN} are also true for $(\mathcal L_1, D_Z)$ (see Section 3 in \cite{AC2}).  Moreover, if we define the quantity for the shift $a=a(Z)$
$$
\alpha(a)=\frac{\psi''_c(L)}{\psi'_c(L)}+Z= \frac{1-a^2}{a}+Z,
$$ 
then, the kernel associated with $(\mathcal L_{1}, D_Z)$ is trivial in the following cases: for $\alpha\neq 0$ or $\alpha= 0$ in the case of admissible parameters $Z$  satisfying  $Z\leqq 0$ (see Theorem 1.3 and Lemma 3.5 in \cite{AC2}). The existence of these positive single-lobe state profiles with $Z\neq 0$ is more challenging and will be addressed in future work.  Our approach has a prospect of being extended to study stability properties of other standing wave states for the NLS-log on a tadpole graph (by instance, to choose boundary conditions in the family of 6-parameters given in \eqref{6bc}) or on another non-compact metric graph such as a looping edge graph (see Figure 7 below), namely,  a graph consisting of a circle with several half-lines attached at a single vertex (see \cite{AC1, AC2, AM}).
\begin{figure}[htp]\label{figure2}
\centering
\begin{tikzpicture}[scale=1.8]

\draw [thin, dashed] (1.3,0)--(1,0);
\draw[stealth-](1,0)--(0,0);
\node at (1.4,0.1){};

\node at (-0.15,0.1){\tiny{$-L$}};

\node at (-0.15,-0.1){\tiny{$+L$}};

\draw[stealth-](0.7,0.7)--(0,0);
\draw [thin, dashed] (0.7,0.7)--(1,1);
\node at (0.9,1.1)[rotate=45]{};

\draw[stealth-](-0.45,-0.5)--(-0.5,-0.5);

\draw[-stealth](-0.45,0.5)--(-0.5,0.5);

\draw[stealth-](-1,0.0)--(-1.0,0.05);

\draw[stealth-](0.7,-0.7)--(0,0);
\draw [thin, dashed] (0.7,-0.7)--(1,-1);
\node at (1.1,-0.9)[rotate=-45]{};

\fill (0,0)  circle[radius=1pt];

\draw [thin] (-0.5,0) circle(0.5);

\draw[-stealth](0,0)--(0.89,0.45);
\node at (1.19,0.75)[rotate=3
0]{};
\draw [thin, dashed] (0.89,0.45)--(1.19,0.6);
\draw[-stealth](0,0)--(0.89,-0.45);
\node at (1.23,-0.49)[rotate=-30]{};
\draw [thin, dashed] (0.89,-0.45)--(1.19,-0.6);
\fill (0,0)  circle[radius=1pt];
\end{tikzpicture}
\centerline{Figure 7: A looping edge graph  with $N=5$ half-lines}
\end{figure}

 \section{Appendix}
 
\subsection{Classical extension theory results}

The following results are classical in the extension theory of symmetric operators and can be found in \cite{Nai67, RS}. Let $A$ be a closed densely defined symmetric operator in the Hilbert space $H$. The domain of $A$ is denoted by $D(A)$. The deficiency indices of $A$ are denoted by  $n_\pm(A):=\dim \text{ker}(A^*\mp iI)$, with $A^*$ denoting the adjoint operator of $A$.  The number of negative eigenvalues counting multiplicities (or Morse index) of $A$ is denoted by  $n(A)$. 
 
\begin{theorem}[von-Neumann decomposition]\label{d5} 
Let $A$ be a closed, symmetric operator, then
\begin{equation}\label{d6}
D(A^*)=D(A)\oplus\mathcal N_{-i} \oplus\mathcal N_{+i}.
\end{equation}
with $\mathcal N_{\pm i}= \ker (A^*\mp iI)$. Therefore, for $u\in D(A^*)$ and $u=x+y+z\in D(A)\oplus\mathcal N_{-i} \oplus\mathcal N_{+i}$,
\begin{equation}\label{d6a}
A^*u=Ax+(-i)y+iz.
\end{equation}
\end{theorem}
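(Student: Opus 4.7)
The plan is to realize $D(A)$, $\mathcal{N}_{-i}$ and $\mathcal{N}_{+i}$ as three pairwise orthogonal closed subspaces of $D(A^*)$ equipped with the graph inner product $\langle u, v\rangle_* = (u, v) + (A^*u, A^*v)$, and then verify that they jointly exhaust $D(A^*)$ by an orthogonal-complement argument. Once the decomposition is established, the action formula \eqref{d6a} is automatic, since by the very definition of the deficiency subspaces $A^* x = Ax$ for $x \in D(A)$, $A^* y = -iy$ for $y \in \mathcal{N}_{-i}$, and $A^* z = iz$ for $z \in \mathcal{N}_{+i}$.

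First, since $A^*$ is closed, $(D(A^*), \langle \cdot, \cdot\rangle_*)$ is a Hilbert space; $D(A)$ is $\langle \cdot, \cdot\rangle_*$-closed because $A$ is closed, while $\mathcal{N}_{\pm i}$ are closed as kernels of the bounded operators $A^* \mp iI : (D(A^*), \|\cdot\|_*) \to H$. Pairwise orthogonality in the graph inner product is then a direct computation. For example, if $x \in D(A)$ and $y \in \mathcal{N}_{-i}$, then $A^*y = -iy$ and the adjoint relation $(Ax, y) = (x, A^*y)$ give $(Ax, A^*y) = -(x, y)$, so that $\langle x, y\rangle_* = 0$; the cases $\langle x, z\rangle_*$ for $z \in \mathcal{N}_{+i}$ and $\langle y, z\rangle_*$ for $y\in \mathcal{N}_{-i}$, $z\in\mathcal{N}_{+i}$ are entirely analogous.

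The main step, and the only non-routine one, is completeness: we must show that if $u \in D(A^*)$ is $\langle \cdot, \cdot\rangle_*$-orthogonal to each of the three subspaces, then $u = 0$. The key observation is that orthogonality to $D(A)$ reads $(A^* u, Ax) = -(u, x)$ for every $x\in D(A)$, which by definition of $D(A^*)$ upgrades $A^* u$ into $D(A^*)$ with $(A^*)^2 u = -u$. Factoring, $(A^* + iI)(A^* - iI)u = 0$, and hence the two auxiliary vectors
\[
v_- := (A^* - iI)u \in \mathcal{N}_{-i}, \qquad v_+ := (A^* + iI)u \in \mathcal{N}_{+i}
\]
are legitimate test elements for the remaining two orthogonality conditions. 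Substituting $v_\pm$ into $\langle u, v_\pm\rangle_* = 0$ and simplifying with $(A^*)^2 u = -u$ produces the two scalar identities $2\,\mathrm{Im}\,(u, A^*u) \pm (\|u\|^2 + \|A^*u\|^2) = 0$. Adding them forces $\mathrm{Im}\,(u, A^*u) = 0$, while subtracting them forces $\|u\|^2 + \|A^*u\|^2 = 0$, so $u = 0$. Combined with the orthogonal decomposition theorem in the Hilbert space $(D(A^*), \langle \cdot, \cdot\rangle_*)$, this yields the direct sum \eqref{d6}, and applying $A^*$ term by term to $u = x + y + z$ gives \eqref{d6a}.
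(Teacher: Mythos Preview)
Your argument is correct. The graph-inner-product approach you take is the standard one (it is essentially the proof in Reed--Simon, one of the references the paper cites), and each step checks out: the orthogonality computations are right, and the completeness argument via $(A^*)^2u=-u$ and the test vectors $v_\pm=(A^*\mp iI)u$ cleanly yields $u=0$.

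Note, however, that the paper does \emph{not} supply its own proof of this statement. Theorem~\ref{d5} is listed in the Appendix under ``Classical extension theory results'' with the comment that these ``can be found in \cite{Nai67, RS}'', and no argument is given. So there is nothing to compare your proof against in the paper itself; you have simply filled in the standard textbook proof. One minor point worth making explicit in your write-up: the existence of $A^*$ requires $A$ to be densely defined, a hypothesis that appears in the paper's preamble to the Appendix but not in the theorem statement itself.
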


\begin{remark} The direct sum in (\ref{d6}) is not necessarily orthogonal.
\end{remark}

\begin{proposition}\label{11}
	Let $A$ be a densely defined, closed, symmetric operator in some Hilbert space $H$ with deficiency indices equal to  $n_{\pm}(A)=1$. All self-adjoint extensions $A_\theta$ of $A$ may be parametrized by a real parameter $\theta\in [0,2\pi)$ such that
	\begin{equation*}
	\begin{split}
	D(A_\theta)&=\{x+c\phi_+ + \zeta e^{i\theta}\phi_{-}: x\in D(A), \zeta \in \mathbb C\},\\
	A_\theta (x + \zeta \phi_+ + \zeta e^{i\theta}\phi_{-})&= Ax+i \zeta \phi_+ - i \zeta e^{i\theta}\phi_{-},
	\end{split}
	\end{equation*}
	with $A^*\phi_{\pm}=\pm i \phi_{\pm}$, and $\|\phi_+\|=\|\phi_-\|$.
\end{proposition}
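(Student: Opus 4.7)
The plan is to build on Theorem \ref{d5} (the von Neumann decomposition stated just above) and reduce the classification of self-adjoint extensions of $A$ to the classification of maximal isotropic subspaces of a certain indefinite Hermitian form on $\mathcal N_{-i}\oplus \mathcal N_{+i}$. Because $n_\pm(A)=1$, that combined deficiency space is only two-dimensional and the isotropic lines form a single $U(1)$-family, which is the source of the parameter $\theta \in [0, 2\pi)$.

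First I would recall from Theorem \ref{d5} that every $u \in D(A^*)$ decomposes uniquely as $u = x + y + z$ with $x \in D(A)$, $y \in \mathcal N_{-i}$, $z \in \mathcal N_{+i}$, and $A^* u = Ax - iy + iz$. Any self-adjoint extension $B$ satisfies $A \subset B = B^* \subset A^*$, so $D(B) = D(A) + N$ for some subspace $N \subset \mathcal N_{-i}\oplus \mathcal N_{+i}$, and $B$ acts as $A^*$ on $D(B)$. Symmetry of $B$ amounts to the vanishing, for all $u,v \in D(B)$, of the boundary form
\[
[u,v] := \langle A^* u, v\rangle - \langle u, A^* v\rangle.
\]
Using $\langle Ax, \varphi\rangle = \langle x, A^*\varphi\rangle$ for $x \in D(A)$, the $D(A)$-components drop out, and a direct computation with $A^*\phi_\pm = \pm i \phi_\pm$ (together with the fact that $\phi_+$ and $\phi_-$ are $A^*$-eigenvectors for distinct eigenvalues, so the cross terms cancel) gives
\[
[\,y+z,\; y'+z'\,] = -2i \langle y, y'\rangle + 2i \langle z, z'\rangle .
\]

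Writing elements of $\mathcal N_{-i} \oplus \mathcal N_{+i}$ as $\alpha \phi_- + \zeta \phi_+$ and using $\|\phi_+\| = \|\phi_-\|$, the isotropy condition $[u,u]=0$ becomes $|\alpha|^2 = |\zeta|^2$, i.e.\ $\alpha = \zeta e^{i\theta}$ for some $\theta \in [0, 2\pi)$. Each choice of $\theta$ singles out the one-dimensional line $N_\theta = \mathbb C (\phi_+ + e^{i\theta}\phi_-)$, yielding
\[
D(A_\theta) = \{ x + \zeta \phi_+ + \zeta e^{i\theta} \phi_- : x \in D(A),\ \zeta \in \mathbb C \},
\]
and the action formula $A_\theta(x + \zeta \phi_+ + \zeta e^{i\theta}\phi_-) = Ax + i\zeta \phi_+ - i\zeta e^{i\theta}\phi_-$ is just $A^*$ applied to this element. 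The subtle point that I expect to be the main obstacle is distinguishing a merely symmetric extension from a self-adjoint one: an isotropic $N$ only produces symmetry. To upgrade to self-adjointness one needs $N$ to be \emph{maximal} isotropic for the form above, and for this I would use that the form has signature $(1,1)$ on the two-dimensional space $\mathcal N_{-i} \oplus \mathcal N_{+i}$, so its maximal isotropic subspaces are exactly the one-dimensional $N_\theta$ identified above (equivalently, the Cayley transform of $A_\theta$ is a genuine unitary on all of $H$ because $N_\theta$ is the graph of the unique unitary $U: \mathcal N_{+i} \to \mathcal N_{-i}$ sending $\phi_+ \mapsto e^{i\theta}\phi_-$). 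A dimension count $\dim(D(A_\theta)/D(A)) = 1 = n_\pm(A)$ then confirms that the deficiency indices of $A_\theta$ vanish, completing the proof.
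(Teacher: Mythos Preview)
The paper does not actually prove this proposition; it is stated in the Appendix as a classical fact of extension theory, with a pointer to Naimark \cite{Nai67} and Reed--Simon \cite{RS}. So there is no ``paper's own proof'' to compare against. Your argument is the standard von Neumann route (boundary form on $\mathcal N_{-i}\oplus\mathcal N_{+i}$, isotropic subspaces, $U(1)$-parametrization in the $(1,1)$ case), and it is correct; it is exactly the proof one finds in the cited references.
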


The following proposition provides a strategy for estimating the Morse-index of the self-adjoint extensions (see \cite{Nai67}, \cite{ RS}-Chapter X).

\begin{proposition}\label{semibounded}
Let $A$  be a densely defined lower semi-bounded symmetric operator (that is, $A\geq mI$)  with finite deficiency indices, $n_{\pm}(A)=k<\infty$,  in the Hilbert space ${H}$, and let $\widehat{A}$ be a self-adjoint extension of $A$.  Then the spectrum of $\widehat{A}$  in $(-\infty, m)$ is discrete and consists of, at most, $k$  eigenvalues counting multiplicities.
\end{proposition}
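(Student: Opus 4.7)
The plan is to combine the Friedrichs extension with a dimension-counting argument based on the quadratic form, together with Weyl's invariance of the essential spectrum under finite-rank resolvent perturbations.

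First, since $A$ is densely defined and bounded below by $mI$, I would construct the Friedrichs extension $A_{F}$ of $A$: close the form $q(u,v)=\langle Au,v\rangle + (1-m)\langle u,v\rangle$ on $D(A)$ in $H$ to obtain a Hilbert space $H_{q}\subset H$, and let $A_{F}$ be the self-adjoint operator associated with $q$. By construction $A_{F}\geq mI$, so $\sigma(A_{F})\subset[m,+\infty)$ and in particular $A_{F}$ has no spectrum in $(-\infty,m)$.

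Next I would compare $\widehat{A}$ with $A_{F}$. Both are self-adjoint extensions of the same symmetric operator $A$ with $n_{\pm}(A)=k<\infty$, so by Theorem \ref{d5} the domains $D(A_{F})$ and $D(\widehat{A})$ sit inside $D(A^{*})=D(A)\oplus\mathcal N_{-i}\oplus\mathcal N_{+i}$, and each is obtained from $D(A)$ by attaching a $k$-dimensional ``boundary'' subspace (the graph of a unitary $\mathcal N_{+i}\to\mathcal N_{-i}$). A standard consequence (Krein's resolvent formula) is that for $z\in \rho(\widehat{A})\cap\rho(A_{F})$ the difference $(\widehat{A}-z)^{-1}-(A_{F}-z)^{-1}$ has rank at most $k$. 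Weyl's theorem on invariance of the essential spectrum under finite-rank (hence compact) resolvent perturbations then gives $\sigma_{ess}(\widehat{A})=\sigma_{ess}(A_{F})\subset[m,+\infty)$. Therefore $\sigma(\widehat{A})\cap(-\infty,m)$ is purely discrete, consisting of isolated eigenvalues of finite multiplicity; this settles the first half of the claim.

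For the counting, I would argue by contradiction using the abstract von Neumann decomposition. Suppose $\widehat{A}$ admits at least $k+1$ eigenvalues in $(-\infty,m)$ counted with multiplicity, and let $V\subset D(\widehat{A})$ be a $(k+1)$-dimensional subspace spanned by associated eigenfunctions. Then
\begin{equation*}
\langle \widehat{A}v,v\rangle < m\,\|v\|^{2}\qquad\text{for every }v\in V\setminus\{0\}.
\end{equation*}
However $D(\widehat{A})=D(A)\dot{+}\mathcal L$ with $\dim\mathcal L=k$ (the graph of the unitary encoding the extension, cf.\ Proposition \ref{11} in the rank-one case). Since $\dim V=k+1>\dim\mathcal L$, the projection $V\to \mathcal L$ along $D(A)$ has nontrivial kernel, so there exists $v_{0}\in V\setminus\{0\}$ with $v_{0}\in D(A)$. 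On such a vector $\widehat{A}v_{0}=Av_{0}$ and the hypothesis $A\geq mI$ yields $\langle \widehat{A}v_{0},v_{0}\rangle=\langle Av_{0},v_{0}\rangle \geq m\|v_{0}\|^{2}$, contradicting the strict inequality above. Hence the number of eigenvalues of $\widehat{A}$ in $(-\infty,m)$, counting multiplicities, is at most $k$.

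The main obstacle is the justification of the finite-rank resolvent difference $(\widehat{A}-z)^{-1}-(A_{F}-z)^{-1}$, which is what allows one to invoke Weyl's theorem to get \emph{discreteness} below $m$; the counting itself is then an elementary linear-algebra consequence of the von Neumann description of $D(\widehat{A})$. Once the resolvent step is in hand, the rest of the argument is essentially automatic, and the semiboundedness of the Friedrichs extension is exactly what pins down the threshold $m$.
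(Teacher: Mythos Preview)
The paper does not give its own proof of Proposition~\ref{semibounded}; it is listed in the Appendix as a classical fact with references to Naimark~\cite{Nai67} and Reed--Simon~\cite{RS}, Chapter~X. Your argument is correct and is in fact one of the standard routes to this result.

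A small remark on economy: your second step (the dimension count via the von~Neumann decomposition $D(\widehat{A})=D(A)\dot{+}\mathcal L$, $\dim\mathcal L=k$) already forces the spectrum of $\widehat{A}$ in $(-\infty,m)$ to be discrete. Indeed, if $\sigma(\widehat{A})\cap(-\infty,m)$ contained essential spectrum or more than $k$ eigenvalues, then for some $\epsilon>0$ the spectral subspace $E_{(-\infty,m-\epsilon)}H$ would have dimension $\geq k+1$; since spectral projections preserve $D(\widehat{A})$, one obtains a $(k+1)$-dimensional subspace $V\subset D(\widehat{A})$ on which $\langle\widehat{A}v,v\rangle<m\|v\|^{2}$, and your pigeonhole argument then produces the same contradiction. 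So the Friedrichs/Krein/Weyl machinery in your first step, while perfectly valid, is not strictly needed---the elementary linear-algebra step does all the work. That said, separating the two roles (Weyl for discreteness, von~Neumann for the count) is a clean way to present the proof and matches the spirit of the references the paper cites.
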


The next Proposition can be found in Naimark \cite{Nai67} (see Theorem 9).

\begin{proposition}
\label{esse}
All self-adjoint extensions of a closed, symmetric operator
which has equal and finite deficiency indices have one and the
same continuous spectrum.
\end{proposition}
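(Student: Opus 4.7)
The plan is to show that any two self\-adjoint extensions $A_1, A_2$ of the symmetric operator $A$ are related by a finite\-rank perturbation of the resolvent, and then to invoke Weyl's invariance theorem for the essential spectrum to conclude that their continuous parts coincide. The hypothesis that $n_+(A)=n_-(A)=k<\infty$ is precisely what limits the rank of this perturbation.

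First, I would use the von\-Neumann decomposition \eqref{d6} of $D(A^*)$ to describe the family of self\-adjoint extensions. By Proposition \ref{11} (and its $k$\-dimensional generalization), every self\-adjoint extension $\widehat{A}$ of $A$ is the restriction of $A^*$ to a domain of the form $D(\widehat{A}) = D(A) \oplus (I+U)\mathcal N_{+i}$, where $U:\mathcal N_{+i} \to \mathcal N_{-i}$ is a unitary isomorphism between the $k$\-dimensional deficiency subspaces. Consequently, for two such extensions $A_1$ and $A_2$, the quotient $D(A_1)/D(A) + D(A_2)/D(A)$ has dimension at most $2k$ inside $D(A^*)/D(A)$.

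Next, I would establish Krein's resolvent formula: for any $z$ in the common resolvent set $\rho(A_1) \cap \rho(A_2)$ (for instance, $z = \pm i$), the operator
\begin{equation*}
R_{A_1}(z) - R_{A_2}(z) = (A_1 - zI)^{-1} - (A_2 - zI)^{-1}
\end{equation*}
has range contained in $\mathcal N_{\bar z} = \ker(A^* - \bar z I)$, which has dimension $k$. This can be seen by a direct computation: if $\varphi_j = R_{A_j}(z)f$, then $(A^* - zI)(\varphi_1 - \varphi_2) = 0$, so $\varphi_1 - \varphi_2 \in \mathcal N_{\bar z}$. Hence $R_{A_1}(z) - R_{A_2}(z)$ is a finite\-rank (and in particular compact) operator on $H$.

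Finally, I would invoke Weyl's theorem on the invariance of the essential spectrum under compact perturbations of the resolvent: since $R_{A_1}(z) - R_{A_2}(z)$ is compact, the essential spectra of $R_{A_1}(z)$ and $R_{A_2}(z)$ agree, and by the spectral mapping theorem for the resolvent this transfers to equality of the essential spectra of $A_1$ and $A_2$. Because the continuous spectrum (in Naimark's sense) is the complement of the discrete spectrum within the essential spectrum, and both coincide with the essential spectrum up to isolated eigenvalues of finite multiplicity whose contribution is already excised, we conclude $\sigma_c(A_1) = \sigma_c(A_2)$. The main technical point is justifying the finite rank of the resolvent difference carefully; once Krein's formula is in hand, the Weyl\-type conclusion is standard.
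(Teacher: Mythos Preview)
The paper does not give its own proof of this proposition; it simply cites Naimark \cite{Nai67} (Theorem 9). So there is no in-paper argument to compare against.

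Your approach---Krein's resolvent identity to show that any two self-adjoint extensions differ by a finite-rank (hence compact) resolvent perturbation, followed by Weyl's invariance theorem for the essential spectrum---is the standard route and is essentially what one finds in Naimark and in Reed--Simon. The computation showing that $R_{A_1}(z)-R_{A_2}(z)$ maps into $\ker(A^*-\bar z I)$ is correct and is the heart of the matter.

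One small point worth tightening: in the last paragraph you pass from invariance of the \emph{essential} spectrum to invariance of the \emph{continuous} spectrum, and the sentence relating the two is a bit loose. In Naimark's terminology the continuous spectrum of a self-adjoint operator coincides with the essential spectrum (the complement of the isolated eigenvalues of finite multiplicity), so the conclusion is immediate once Weyl's theorem is applied; there is no further ``excision'' step needed. If instead one reads $\sigma_c$ as $\sigma\setminus\sigma_p$ in the modern sense, the statement would be false in general (embedded eigenvalues can be created or destroyed by finite-rank perturbations), so it is worth stating explicitly which definition you are using.
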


 \subsection{Perron-Frobenius property for $\delta$-interaction Schr\"odinger operators on the line}

In this section, we establish  the Perron-Frobenius property for the unfolded self-adjoint operator $ \widetilde{\mathcal L}$ in  \eqref{Leven},
\begin{equation}\label{2Leven}
\widetilde{\mathcal L}=-\partial_x^2+ (c-2)- \text{Log}(\psi_{even}^2)= -\partial_x^2+ (|x|+a)^2-3
 \end{equation} 
 on $\delta$-interaction domains, namely,
\begin{equation}\label{2Ddelta}
   D_{\delta, \gamma}=\{f\in H^2(\mathbb R-\{0\})\cap H^1(\mathbb R): x^2f \in L^2(\mathbb R), f'(0+)-f'(0-)=\gamma f(0)\}
   \end{equation}
for any $\gamma \in \mathbb R$.  Here, $\psi_{even}$ is the even extension to the whole line of the Gausson tail-soliton profile $\psi_{a}(x)=  e^{\frac{c+1}{2}}e^{-\frac{(x+a)^2}{2}}$, with $x>0$, $a>0$. Since  
$$
\lim\limits_{|x|\to +\infty} (|x|+a)^2=+\infty,
$$
operator $\widetilde{\mathcal L}$ has a discrete spectrum, $\sigma(\widetilde{\mathcal L})=\sigma_d(\widetilde{\mathcal L})=\{\lambda_{k}\}_{k\in\mathbb{N}}$ (this statement can be obtained similarly using the strategy in the proof of Theorem 3.1 in  \cite{Berezin}). In particular, from subsections 2-3 in Chapter 2 in \cite{Berezin} adapted to $(\widetilde{\mathcal L}, D_{\delta, \gamma})$ for $\gamma$ fixed (see also Lemma 4.8 in \cite{Ang2}) we have the following distribution of the eigenvalues $\lambda_0<\lambda_1<\cdot\cdot\cdot<\lambda_k<\cdot\cdot\cdot $,
with $\lambda_k\to +\infty$ as $k\to +\infty$ and from the semi-boundedness of $V_a=(|x|+a)^2-3$   we obtain that any solution of the equation $\widetilde{\mathcal L} v=\lambda_k v$,  $v\in D_{\delta, \gamma}$,
 is unique up to a constant factor. Therefore each eigenvalue $\lambda_{k}$ is simple.

\begin{theorem}\label{PFpro}$[$ Perron-Frobenius property $]$ Consider the family of self-adjoint operators $(\widetilde{\mathcal L}, D_{\delta, \gamma})_{\gamma \in \mathbb R}$. For $\gamma$ fixed, let  $\lambda_0=\inf \sigma_p(\widetilde{\mathcal L})$ be the smallest eigenvalue. Then, the corresponding eigenfunction $\zeta_0$ of $\lambda_0$ is positive (after replacing $\zeta_0$ by $-\zeta_0$ if necessary) and even.
\end{theorem}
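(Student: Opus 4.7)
The plan is to run the classical variational Perron--Frobenius argument, suitably adapted to handle the $\delta$-interaction at the origin, and then deduce evenness from a parity--symmetry argument. I would begin by working with the closed, lower-semibounded quadratic form associated (via the First Representation Theorem) with $(\widetilde{\mathcal L}, D_{\delta,\gamma})$, namely
$$
\mathfrak q_\gamma(u)=\int_{\mathbb R}\bigl(|u'(x)|^2+[(|x|+a)^2-3]\,|u(x)|^2\bigr)\,dx+\gamma\,|u(0)|^2,
$$
defined on the form domain $\mathcal F=\{u\in H^1(\mathbb R): xu\in L^2(\mathbb R)\}$. Since the preamble tells us the spectrum is purely discrete and that $\lambda_0$ is simple, the Rayleigh characterization $\lambda_0=\inf_{u\in\mathcal F\setminus\{0\}}\mathfrak q_\gamma(u)/\|u\|^2$ applies, and any minimizer is an eigenfunction belonging to $D_{\delta,\gamma}$.

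Next, I would take a real-valued ground state $\zeta_0$ and compare it with its modulus. For every real $u\in\mathcal F$ one has $|u|\in\mathcal F$ with $\bigl||u|'\bigr|\le|u'|$ almost everywhere (Stampacchia), and the boundary term is unchanged since $|u|(0)=|u(0)|$; hence $\mathfrak q_\gamma(|u|)\le\mathfrak q_\gamma(u)$ while $\||u|\|=\|u\|$. Applied to $u=\zeta_0$ this makes $|\zeta_0|$ a minimizer as well, and the simplicity of $\lambda_0$ forces $\zeta_0=\pm|\zeta_0|$, so we may assume $\zeta_0\ge 0$. I would then upgrade this to strict positivity using ODE uniqueness. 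On each of $(-\infty,0)$ and $(0,+\infty)$ the function $\zeta_0$ solves the linear equation $-\zeta_0''+[(|x|+a)^2-3-\lambda_0]\zeta_0=0$. If $\zeta_0(x_*)=0$ for some $x_*\neq 0$, then $x_*$ is a minimum of a non-negative $C^1$-function, so $\zeta_0'(x_*)=0$, and uniqueness of the Cauchy problem implies $\zeta_0\equiv 0$ on the corresponding half-line; the $\delta$-condition and a second uniqueness step then kill $\zeta_0$ on the other half-line, a contradiction. If instead $\zeta_0(0)=0$, the jump condition reduces to $\zeta_0'(0+)=\zeta_0'(0-)$, while non-negativity forces $\zeta_0'(0+)\ge 0\ge\zeta_0'(0-)$; the common value must therefore be $0$, and uniqueness again yields $\zeta_0\equiv 0$. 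Thus $\zeta_0>0$ throughout $\mathbb R$.

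Finally, for the evenness I would invoke the parity operator $P\!:u(x)\mapsto u(-x)$, which is unitary on $L^2(\mathbb R)$ and commutes with $\widetilde{\mathcal L}$: the potential $(|x|+a)^2-3$ is even, and $D_{\delta,\gamma}$ is $P$-invariant because $(Pu)'(0+)-(Pu)'(0-)=u'(0+)-u'(0-)=\gamma u(0)=\gamma(Pu)(0)$. Simplicity of $\lambda_0$ forces $\zeta_0$ to be an eigenvector of $P$, hence either even or odd; the odd case is eliminated by the strict positivity just obtained. I expect the main obstacle to be the consistent handling of the $\delta$-interaction: one must simultaneously check that the modulus map and the parity map both preserve $D_{\delta,\gamma}$ (or at least the form domain $\mathcal F$ for the modulus), and verify that the ODE uniqueness argument is not spoiled at the origin. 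The latter works precisely because the jump $\gamma\zeta_0(0)$ vanishes as soon as we are in the problematic case $\zeta_0(0)=0$, so the function is $C^1$ across $0$ and the standard initial-value uniqueness theorem applies. Everything else -- the Rayleigh principle, simplicity of $\lambda_0$, and discreteness of $\sigma(\widetilde{\mathcal L})$ -- has already been recorded before the statement.
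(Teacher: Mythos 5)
Your argument is correct, but it follows a genuinely different route from the paper. The paper does not argue variationally: it proves the theorem by a ``twist'' of the abstract Perron--Frobenius scheme of Albert--Bona--Henry, whose key input is that the resolvent $R_\mu=(-\Delta_\gamma+\mu)^{-1}$ of the pure $\delta$-interaction Laplacian on $D_{\delta,\gamma}$ is an integral operator with strictly positive kernel; this kernel is written down explicitly via Krein's formula, $K(x,y)=\frac{1}{2\sqrt\mu}\bigl[e^{-\sqrt\mu|x-y|}-\frac{\gamma}{\gamma+2\sqrt\mu}e^{-\sqrt\mu(|x|+|y|)}\bigr]$, and checked to be positive case by case for $\mu$ large (with $-2\sqrt\mu<\gamma$ when $\gamma<0$), the rest being delegated to the abstract machinery (an alternative route through Berezin--Shubin's oscillation theory is also mentioned). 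Your proof instead uses the Rayleigh quotient of the form $\mathfrak q_\gamma$, the modulus trick, ODE uniqueness across the vertex, and parity symmetry. Each approach has its merits: yours is elementary and self-contained, needing neither Krein's formula nor the positivity-improving-resolvent framework, but it does consume the simplicity of $\lambda_0$ as an input --- legitimately so, since the paper records simplicity of all eigenvalues (by uniqueness, up to scalars, of the $L^2$ solution on each half-line) before the statement, so there is no circularity; the paper's kernel-positivity route, by contrast, does not presuppose simplicity (the abstract argument delivers ground-state simplicity and positivity together) and transfers painlessly to other potentials since the potential enters only through the abstract perturbation step, at the price of the explicit resolvent computation. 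Note also that the paper's written proof addresses only positivity; the evenness claim is left to exactly the simplicity-plus-parity observation you spell out, so your treatment is if anything more complete on that point. Two small items worth making explicit if you polish this: justify that $\mathcal F=\{u\in H^1(\mathbb R):xu\in L^2(\mathbb R)\}$ is indeed the form domain of $(\widetilde{\mathcal L},D_{\delta,\gamma})$ (the trace term $\gamma|u(0)|^2$ is form-bounded with relative bound zero), and state the standard fact that a minimizer of the Rayleigh quotient at the bottom of a purely discrete spectrum lies in the corresponding eigenspace, which is what lets you pass from ``$|\zeta_0|$ is a minimizer'' to ``$|\zeta_0|$ is an eigenfunction.''
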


\begin{proof} This result can be obtained by following the strategy in the proof of Theorem 3.5 in \cite{Berezin}. Here, we give another approach via a slight twist of standard abstract Perron-Frobenius arguments (see Proposition 2 in Albert\&Bona\&Henry \cite{ABH}). The basic point in the analysis is to show that the Laplacian operator $-\Delta_\gamma \equiv-\frac{d^2}{dx^2}$ on the domain $D_{\delta, \gamma}$ has its resolvent $R_\mu=(-\Delta_\gamma +\mu)^{-1}$ represented by a positive kernel for some $\mu>0$ sufficiently large. Namely, for $f\in L^2(\mathbb R)$
$$
R_\mu f(x)= \int_{-\infty}^{+\infty}K(x, y) f(y)dy
$$
with $K(x, y)>0$ for all $x,y\in \mathbb R$. By the convenience of the reader, we show this main point; the remainder of the proof follows the same strategy as in \cite{ABH}. Thus, for $\gamma$ fixed, let $\mu>0$ be sufficiently large (with $-2\sqrt{\mu}<\gamma$ in the case $\gamma<0$), then from the Krein formula (see Theorem 3.1.2 in \cite{Albe}) we obtain
$$
K(x,y)=\frac{1}{2\sqrt{\mu}}\Big[ e^{-\sqrt{\mu}|x-y|}-\frac{\gamma}{\gamma+2\sqrt{\mu}} e^{-\sqrt{\mu}(|x|+|y|)}\Big].
$$
Moreover, for every $x$ fixed, $K(x, \cdot) \in L^2(\mathbb R)$. Thus, the existence of the integral above is guaranteed by Holder's inequality. Moreover, $x^2 R_\mu f\in  L^2(\mathbb R)$.  Now, since $K(x,y)=K(y,x)$, it is sufficient to show that $K(x,y)>0$ in the following cases.
\begin{enumerate}
\item[(1)] Let $x>0$ and $y>0$ or $x<0$ and $y<0$: for $\gamma \geqq 0$, we obtain from $\frac{\gamma}{\gamma+2\sqrt{\mu}}<1$ and $|x-y|\leqq |x|+ |y|$, that $K(x,y)>0$. For $\gamma <0$ and $-2\sqrt{\mu}<\gamma$,  it follows immediately $K(x,y)>0$.

\item[(2)] Let $x>0$ and $y<0$: in this case,
$$
K(x,y)= \frac{1}{\gamma+2\sqrt{\mu}}e^{-\sqrt{\mu}(x-y)} >0
$$
for any value of $\gamma$ (where again  $-2\sqrt{\mu}<\gamma$ in the case $\gamma<0$) .
\end{enumerate}

This finishes the proof.
\end{proof}

\subsection{Orbital stability criterion}
 
For the convenience of the reader,  in this subsection we adapt the abstract stability results from Grillakis\&Shatah\&Strauss in  \cite{GrilSha87} for the case of the NLS-log on a tadpole graph.  This criterion was used in the proof of Theorem \ref{stability} for the case of standing waves that are positive single-lobe states.

\begin{theorem}\label{main} Suppose that there is  $C^1$-mapping $c\to (\phi_{c}, \psi_{c})$ of standing-wave solutions for the NLS-log model \eqref{nlslog} on a tadpole graph. We consider the operators $\mathcal L_{1}$ and $\mathcal L_{2}$ in \eqref{L+}.
For $\mathcal L_{1}$ suppose that the  Morse index is one and its kernel is trivial. For $\mathcal L_{2}$ suppose that it is a non-negative operator with kernel generated by the profile $(\phi_{c}, \psi_{c})$. Moreover, suppose that the Cauchy problem associated with the NLS-log model \eqref{nlslog} is globally well-posed in the space $\widetilde{W}$ in \eqref{WW}. Then, $e^{ic t}
(\phi_{c}, \psi_{c})$ is orbitally stable in $\widetilde{W}$  if $ \frac{d}{dc} ||(\phi_{c}, \psi_{c})||^2>0$. 
\end{theorem}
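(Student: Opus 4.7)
The plan is to execute the classical Grillakis-Shatah-Strauss Lyapunov argument in the tadpole-graph/logarithmic setting. The natural Lyapunov functional is the action $\mathbf{S}(U) = E(U) - (c+1) Q(U)$, which by Theorem~\ref{global} is conserved along the flow of \eqref{nlslog} in $\widetilde{W}$ (both $E$ and $Q$ are), admits $\Theta_c = (\phi_c,\psi_c)$ as a critical point, and has second variation $\mathbf{S}''(\Theta_c) = \mathrm{diag}(\mathcal{L}_1, \mathcal{L}_2)$ acting on real and imaginary parts. The only continuous symmetry of \eqref{nlslog} in $\widetilde{W}$ is phase rotation $U \mapsto e^{i\theta} U$, so orbital stability will follow once $\mathbf{S}(U) - \mathbf{S}(\Theta_c)$ is shown to control the $\widetilde{W}$-distance from $U$ to the one-dimensional orbit $\{e^{i\theta}\Theta_c : \theta \in [0,2\pi)\}$.

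First I would set up the modulation. For $U \in \widetilde{W}$ in a small $\widetilde{W}$-tubular neighbourhood of the orbit, the implicit function theorem applied to $\theta \mapsto \mathrm{Im}\langle e^{-i\theta}U, \Theta_c\rangle_{L^2(\mathcal{G})}$ at $\theta=0$ (using $\|\Theta_c\|^2 \neq 0$) produces a unique $\theta(U)$ such that the shifted perturbation $V := e^{-i\theta(U)} U - \Theta_c = V_1 + i V_2$ satisfies the orthogonality $\langle V_2, \Theta_c\rangle = 0$. Since $E, Q \in C^2(\widetilde{W})$ at $\Theta_c$ (the very motivation for introducing $\widetilde{W}$), Taylor's formula yields
\begin{equation*}
\mathbf{S}(U) - \mathbf{S}(\Theta_c) = \tfrac{1}{2}\langle \mathcal{L}_1 V_1, V_1\rangle + \tfrac{1}{2}\langle \mathcal{L}_2 V_2, V_2\rangle + o(\|V\|_{\widetilde{W}}^2).
\end{equation*}
Conservation of mass together with phase-invariance of $Q$ gives $\|\Theta_c + V(t)\|^2 = \|U_0\|^2$, and hence $|\langle \Theta_c, V_1(t)\rangle| \lesssim \eta + \|V(t)\|_{L^2}^2$, where $\eta := \|U_0 - \Theta_c\|_{\widetilde{W}}$ is the initial distance.

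The heart of the proof is a coercivity lemma: there exists $\delta > 0$ such that for all $V = V_1 + iV_2 \in \widetilde{W}$ satisfying $\langle V_2, \Theta_c\rangle = 0$ and $|\langle V_1, \Theta_c\rangle| \leq \varepsilon \|V\|_{\widetilde{W}}$ (for $\varepsilon$ small),
\begin{equation*}
\langle \mathcal{L}_1 V_1, V_1\rangle + \langle \mathcal{L}_2 V_2, V_2\rangle \geq \delta \|V\|_{\widetilde{W}}^2.
\end{equation*}
For the imaginary sector this is essentially direct: Theorem~\ref{L2} gives $\mathcal{L}_2 \geq 0$ with $\ker(\mathcal{L}_2) = \mathrm{span}\{\Theta_c\}$, and because the half-line component of $\mathcal{L}_2$ carries the confining potential $(x-L+a)^2 - 1$ while the ring component lives on a compact interval, $\mathcal{L}_2$ has purely discrete spectrum with a positive spectral gap above zero; the orthogonality $V_2 \perp \Theta_c$ then yields $\langle \mathcal{L}_2 V_2, V_2\rangle \gtrsim \|V_2\|_{\widetilde{W}}^2$, since the quadratic potential absorbs the $xL^2$-weight of $V_2$ on the half-line. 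For the real sector, Theorem~\ref{FrobeN} furnishes a single simple negative eigenvalue $\beta_0$ with ground state $f_{\beta_0}$ and trivial kernel, so $\mathcal{L}_1 \geq \gamma > 0$ on $\{f_{\beta_0}\}^\perp$. Differentiating \eqref{nlslogv} in $c$ produces the key identity
\begin{equation*}
\mathcal{L}_1 (\partial_c \Theta_c) = -\Theta_c,
\end{equation*}
so $\langle \mathcal{L}_1^{-1}\Theta_c, \Theta_c\rangle = -\langle \partial_c \Theta_c, \Theta_c\rangle = -\tfrac{1}{2}\tfrac{d}{dc}\|\Theta_c\|^2 < 0$ by hypothesis. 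A standard Schur-complement/Lagrange-multiplier computation (decompose $V_1 = \alpha f_{\beta_0} + W$ with $W \perp f_{\beta_0}$, solve $\langle V_1, \Theta_c\rangle = 0$ for $\alpha$, and minimise) then shows $\langle \mathcal{L}_1 V_1, V_1\rangle \geq \delta' \|V_1\|_{L^2}^2$ on the $L^2$-orthogonal complement of $\Theta_c$; the upgrade from $L^2$ to the $\widetilde{W}$-norm is obtained by adding and subtracting a large constant, since $-\partial_x^2$ plus a bounded-below potential is $H^1$-coercive.

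The proof is closed by the standard continuity/bootstrap loop. By Theorem~\ref{global} the solution $U(t)$ exists globally in $\widetilde{W}$; for $\eta$ small the modulation $\theta(t)$ is well-defined and continuous on a maximal interval on which $U(t)$ stays in the tubular neighbourhood. Combining $\mathbf{S}(U(t)) - \mathbf{S}(\Theta_c) = \mathbf{S}(U_0) - \mathbf{S}(\Theta_c) = O(\eta)$, the Taylor expansion, and the coercivity lemma applied at each $t$ (with the approximate constraint $|\langle V_1(t),\Theta_c\rangle| \lesssim \eta + \|V(t)\|_{\widetilde{W}}^2$ playing the role of the $\varepsilon$-hypothesis) yields $\|V(t)\|_{\widetilde{W}}^2 \lesssim \eta$ uniformly in $t$, which prevents $U(t)$ from ever reaching the boundary of the neighbourhood and therefore closes the bootstrap. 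The main obstacle I anticipate is the coercivity lemma itself, and specifically the upgrade from $L^2$ to $\widetilde{W}$ on the real sector: the Schur-complement argument lives naturally in $L^2$, and to recover the weighted $xL^2$-part of the $\widetilde{W}$-norm for $V_1$ on the half-line one must exploit the explicit potential $(x-L+a)^2 - 3$ appearing in $\mathcal{L}_1$ there, while on the ring one must handle the unbounded logarithmic potential $-\text{Log}|\phi_c|^2$ (bounded below but singular where $\phi_c$ may be small) through a positive-part/negative-part splitting before absorbing into an $H^1$-coercive bound.
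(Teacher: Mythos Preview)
The paper does not supply a proof of Theorem~\ref{main}; it is stated in the Appendix merely as a restatement of the abstract Grillakis--Shatah--Strauss criterion adapted to the tadpole/$\widetilde W$ setting, with the proof deferred to \cite{GrilSha87}. Your sketch is precisely the GSS Lyapunov argument that the citation points to: modulation in the phase variable, Taylor expansion of the action at the critical point $\Theta_c$, the key identity $\mathcal L_1(\partial_c\Theta_c)=-\Theta_c$ leading to $\langle\mathcal L_1^{-1}\Theta_c,\Theta_c\rangle<0$ under the slope condition, the Schur-complement coercivity on $\{\Theta_c\}^\perp$, and the bootstrap closure via conservation. So in spirit you are doing exactly what the paper intends, only you have written out the mechanism instead of citing it.

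Two small remarks on details you may want to tighten. First, you inherit the sign in \eqref{S1}, $\mathbf S=E-(c+1)Q$, but the computation in Section~3 (and the equation $E'(\Theta_c)+(c+1)Q'(\Theta_c)=0$) shows the correct action is $E+(c+1)Q$; this is a typo in the paper, not in your argument, but be consistent when you expand. Second, your worry that $-\text{Log}|\phi_c|^2$ may be singular on the ring is unfounded in the application: for a positive single-lobe state $\phi_c$ is continuous and strictly positive on the compact interval $[-L,L]$, so the potential is bounded there and the $H^1$-upgrade on the ring is immediate. The only genuine analytic content in the $\widetilde W$-coercivity upgrade is on the half-line, where, as you correctly note, the quadratic potential $(x-L+a)^2$ in both $\mathcal L_1$ and $\mathcal L_2$ directly controls the weighted part of the $\widetilde W$-norm.
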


\vskip0.1in

 \noindent
{\bf Data availability} 
\vskip0.1in
No data was used for the research described in the article.
 
 \vskip0.1in

 \noindent
{\bf Acknowledgements.}  The authors thank the anonymous referees for their insightful comments and suggestions which improved the quality of the paper. J. Angulo was partially funded by CNPq/Brazil Grant and Universal/CNPq project, him would like to thank to Mathematics Department of the Federal University of Pernambuco (UFPE) by the support and the warm stay during  the Summer-program/2025, where part of the present project was developed. A. P\'erez was partially funded by CAPES/Brazil and UNIVESP/S\~ao Paulo/Brazil.

\end{document}